\newtheorem{thm}{Theorem}[section]                                          
\newtheorem{prp}[thm]{Proposition}                          
\newtheorem{lem}[thm]{Lemma}
\newtheorem{cly}[thm]{Corollary}
\newtheorem{dfn}[thm]{Definition}
\newtheorem{rem}[thm]{Remark}
\numberwithin{equation}{section}
\def\Eb{\mathbb{E}}
\def\Nb{\mathbb{N}}
\def\Pb{\mathbb{P}}
\def\Rb{\mathbb{R}}
\def\Zb{\mathbb{Z}}
\def\Ac{\mathcal{A}}
\def\Fc{\mathcal{F}}
\def\Gc{\mathcal{G}}
\def\Ic{\mathcal{I}}
\def\Jc{\mathcal{J}}
\def\Lc{\mathcal{L}}
\def\Rc{\mathcal{R}}
\def\Kr{\mathbf{k}}
\def\Pr{\mathbf{P}}
\def\Tr{\mathbf{T}}
\def\Ts{\mathscr{T}}
\def\Et{\mathit{E}}
\def\Pt{\mathit{P}}
\def\th{^{\text{th}}}
\def\d{\mathrm{d}}
\def\ind{\mathbf{1}}
\def\nin{n \rightarrow \infty}
\def\IID{\emph{i.i.d.\ }}
\def\ed{\stackrel{\text{\tiny{d}}}{=}}
\def\qqquad{\qquad \quad}
\newcommand\blfootnote[1]{%
  \begingroup
  \renewcommand\thefootnote{}\footnote{#1}%
  \addtocounter{footnote}{-1}%
  \endgroup
}
\begin{document}

\title{Differentiability of the speed of biased random walks on Galton-Watson trees}

\author{Adam Bowditch\thanks{National University of Singapore, Department of Mathematics, Singapore {\tt matamb@nus.edu.sg}}
 \and Yuki Tokushige\thanks{Kyoto University, RIMS,  Kyoto 606-8502, Japan.
 {\tt tokusige@kurims.kyoto-u.ac.jp}}}

\maketitle

\begin{abstract}
We prove that the speed of a $\lambda$-biased random walk on a supercritical Galton-Watson tree is differentiable for $\lambda$ such that the walk is ballistic and obeys a central limit theorem, and give an expression of the derivative using a certain $2$-dimensional Gaussian random variable. The proof heavily uses the renewal structure of Galton-Watson trees that was introduced in \cite{LPP3}.
\end{abstract}

\blfootnote{2010 {\it Mathematics Subject Classification}.  60J80, 60K05, 60K37, 60F17.}
\blfootnote{{\it Key words and phrases}. Galton-Watson tree, biased random walks, renewal structure.}

\section{Introduction}\label{s:int}
In this paper, we investigate the speed of biased random walks on supercritical Galton-Watson trees. Specifically, we show that the speed is differentiable within a certain range of bias and obtain an expression for the derivative in terms of the covariance of a 2-dimensional Gaussian random variable. %This can be seen as the next step towards proving a fluctuation dissipation theorem for the walk, extending the Einstein relation proved in \cite{arhuolze13}.

Random walks on GW-trees are a natural setting for studying trapping phenomena as dead-ends, caused by leaves in the trees, trap the walk. Even without leaves, the randomness in the environment slows the walk and several properties that seem obvious turn out to be non-trivial and interesting problems. These models can be used to approach related problems concerning biased random walks on percolation clusters (as studied in \cite{frha14}) and random walk in random environment (see for example \cite{S}) which experience similar phenomena. For a recent review of trapping phenomena we direct the reader to \cite{arce06}, \cite{arfr16} and \cite{foma14} which detail the history of trapping models including their motivation via spin-glasses and cover recent developments in a range of models of random walks on underlying graphs including supercritical GW-trees. 

We now briefly describe the supercritical GW-tree conditioned on survival via the Harris decomposition; for more detail see \cite{atne04, J}. Let $\{p_k\}_{k\geq 0}$ denote the offspring distribution of a GW-process $W_n$ with a single progenitor, mean $\mu >1$ and probability generating function $f$. The process $W_n$ gives rise to a random tree $\Tr_f$ where individuals are represented by vertices and edges  connect individuals with their offspring. Let $q$ denote the extinction probability of $W_n$ which is strictly less than $1$ since $\mu>1$ and non-zero only when $p_0>0$. In this case we then define 
\[g(s):=\frac{f((1-q)s+q)-q}{1-q} \qquad \text{ and } \qquad h(s):=\frac{f(qs)}{q}\]
which are generating functions of a GW-process without deaths and a subcritical GW-process respectively (cf.\ Chapter I.12 of \cite
{atne04}). An $f$-GW-tree conditioned on nonextinction $\Tr$ can be constructed by first generating a $g$-GW-tree $\Tr_g$ and then,
 to each vertex $x$ of $\Tr_g$, appending a random number of independent $h$-GW-trees (see Figure \ref{treediag}). We refer to $\Tr_g$ as the backbone of $\Tr$, the finite trees appended to $\Tr_g$ as the traps and the vertices in the first generation of the traps as the buds. 
  
\begin{figure}[!t]
\centering
 \includegraphics[scale=0.8]{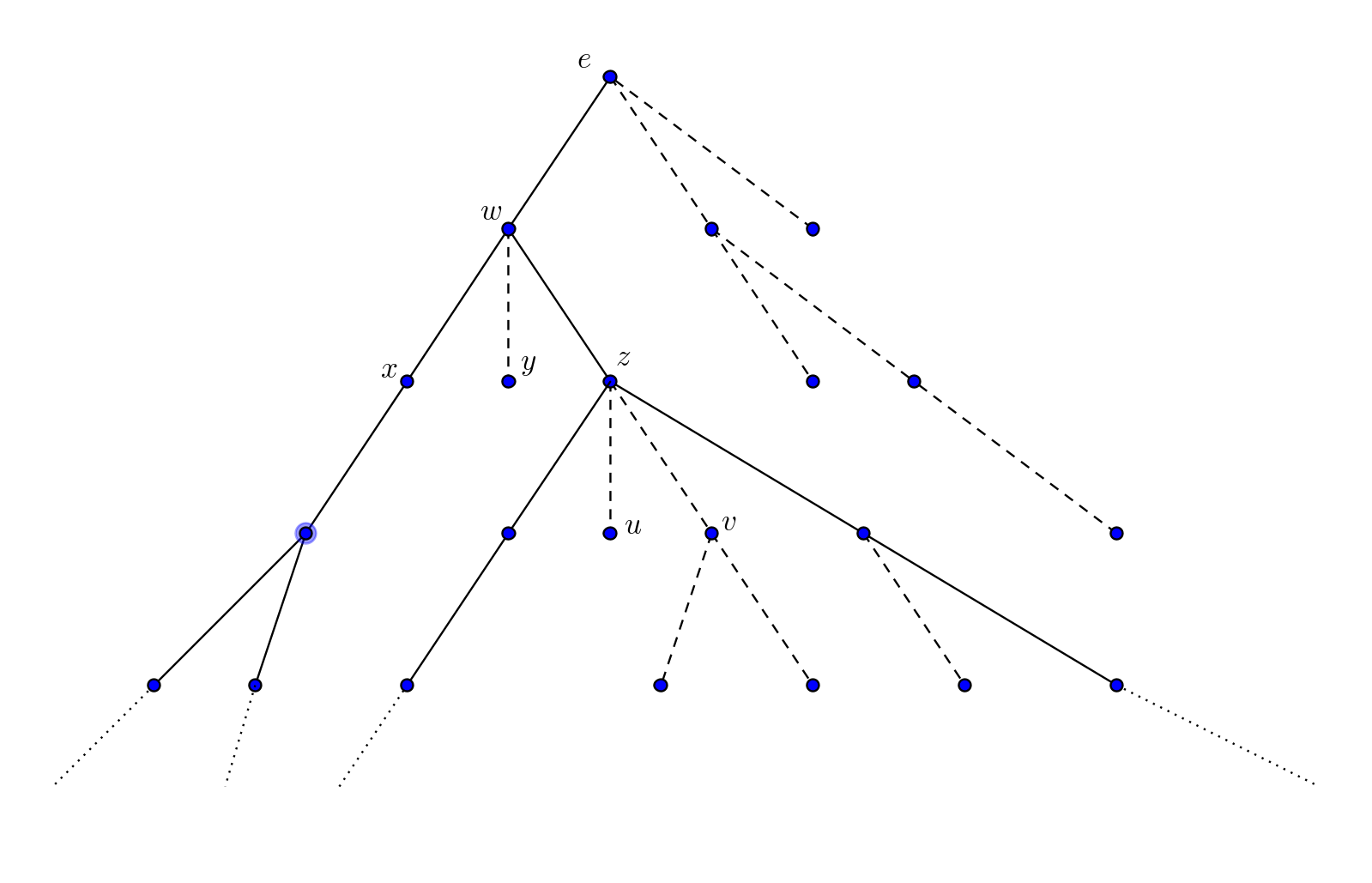} 
\caption{A sample section of a supercritical GW-tree conditioned to survive $\Tr$ with solid lines representing the backbone $\Tr_g$ and dashed lines representing the traps. Here, the root $e$ is the parent of $w$ (i.e.\ $e=\pi(w)$) which has children $x,y,z$ where $x,z$ are on the backbone and $y$ is a bud in the only trap rooted at $w$. Similarly, $u,v$ are two of the children of $z$, both of which are buds of individual traps rooted at $z$.}\label{treediag}
\end{figure}

We now introduce the biased random walk on a fixed tree $\Ts$. We denote by $e(\Ts)$ the root, which is the vertex representing
 the unique progenitor. For $x\in \Ts$, let $\pi(x)$ denote the parent of $x$ and $\nu(x)$ the number of children of $x$. A $
 \lambda$-biased random walk on $\Ts$ is a random walk $(Z_n)_{n \geq 0}$ on the vertices of $\Ts$ started from $e(\Ts)$ with
  transition probabilities 
\begin{align*}
\Pt^\Ts_\lambda(Z_{n+1}=y|Z_n=x)=A_{\lambda}(x,y):=\begin{cases} \frac{\lambda}{\lambda+\nu(x)}, & \text{if } y=\pi(x), \\  \frac{1}{\lambda
+\nu(x)}, & \text{if } x=\pi(y)\neq e(\Ts), \\ \frac{1}{\nu(x)}, & \text{if }  x=\pi(y)= e(\Ts), \\ 0, & \text{otherwise.} \\ \end{cases}
 \end{align*} 
We use $\Pt_\lambda(\cdot):=\int \Pt^\Tr_\lambda(\cdot)\Pb(\text{d}\Tr)$ for the \emph{annealed law} obtained by averaging the
 \emph{quenched law} $\Pt^\Tr_\lambda$ over the law $\Pb$ on $f$-GW-trees conditioned to survive. For $x \in \Tr$, let $d(x)$ denote the distance between $x$ and the root of the tree and write $\lambda_c:=f'(q)$ where we note that $\lambda_c=0$ when $p_0=0$. 

The behaviour of $\lambda$-biased random walks on the GW-tree $\Tr$ have been extensively studied since \cite{LPP3} showed that if $\lambda\in(\lambda_c,\mu)$ then the walk is \emph{ballistic}; that is, $d(Z_n)n^{-1}$ converges $\Pt_\lambda$-a.s.\ to a deterministic constant $\upsilon_\lambda>0$ called the {\it speed} of the walk. When $\lambda>\mu$ the walk is recurrent and $d(Z_n)n^{-1}$ converges $\Pt_\lambda$-a.s.\ to $0$. When $\lambda$ is small and $p_0>0$, the walk is transient but slowed by having to make long sequences of movements against the drift in order to escape the traps; in particular, if $\lambda\leq \lambda_c$ then the slowing affect is strong enough to cause $d(Z_n)n^{-1}$ to converge $\Pt_\lambda$-a.s.\ to $0$. This regime has been studied further in \cite{BFGH} and \cite{B2} where polynomial scaling results are shown. 

The aim of this paper is to study how the value of $\upsilon_\lambda$ depends on the parameter of bias $\lambda$; specifically, our main result is the following.
\begin{thm}\label{dif}
 Suppose that there exists $\beta>1$ such that $\sum_{k=1}^{\infty}p_k\beta^k<\infty$. Then, the function $\lambda\mapsto \upsilon_{\lambda}$ is differentiable on $(\lambda_c^{1/2},\mu)$.
 Moreover, the derivative of the speed $\upsilon'_{\lambda}$ can be expressed as the covariance of a two dimensional Gaussian random variable $(X,Y)$. Namely, we have that $\upsilon'_{\lambda}=E_{\lambda}[XY]$.
\end{thm}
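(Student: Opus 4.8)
We plan to build the derivative out of the renewal structure of \cite{LPP3}. Recall that there is an increasing sequence of \emph{regeneration times} $0<\tau_1<\tau_2<\cdots$ such that, writing $\Delta_k:=d(Z_{\tau_{k+1}})-d(Z_{\tau_k})$ and $T_k:=\tau_{k+1}-\tau_k$, the pairs $(\Delta_k,T_k)_{k\ge1}$ are i.i.d.\ under $\Pt_\lambda$, and the law of large numbers gives $\upsilon_\lambda=E_\lambda[\Delta_1]/E_\lambda[T_1]$. Under our hypotheses (exponential offspring moments and $\lambda>\lambda_c^{1/2}$) the regeneration times have a finite second moment; this is exactly the regime in which a central limit theorem for $d(Z_n)$ holds and it is what will legitimise every step below. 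Alongside this I would introduce the \emph{score process}: since the tree law $\Pb$ does not depend on $\lambda$, the log-likelihood derivative of the path $(Z_0,\dots,Z_n)$ is
\[
D_n:=\sum_{i=0}^{n-1}\partial_\lambda\log A_\lambda(Z_i,Z_{i+1}),
\]
each summand being bounded (it equals $\lambda^{-1}-(\lambda+\nu(Z_i))^{-1}$ on a step towards the root and $-(\lambda+\nu(Z_i))^{-1}$ on a step away). Since $\sum_y A_\lambda(x,y)\,\partial_\lambda\log A_\lambda(x,y)=\partial_\lambda\sum_y A_\lambda(x,y)=0$, the sequence $(D_n)$ is a martingale under $\Pt^\Tr_\lambda$ with bounded increments; hence the block increments $\psi_k:=D_{\tau_{k+1}}-D_{\tau_k}$ are i.i.d.\ for $k\ge1$, centred (optional stopping, using $E_\lambda[T_1]<\infty$), and satisfy $|\psi_k|\le CT_k$, so $E_\lambda[\psi_1^2]\le C^2E_\lambda[T_1^2]<\infty$.

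The argument then has two halves that must meet at the quantity $E_\lambda[\chi_1\psi_1]/E_\lambda[T_1]$, where $\chi_k:=\Delta_k-\upsilon_\lambda T_k$. On the \emph{Gaussian side}, the vectors $(\chi_k,\psi_k)$ are i.i.d., centred and square integrable, so the multidimensional CLT yields $n^{-1/2}\sum_{k=1}^n(\chi_k,\psi_k)\Rightarrow\mathcal N(0,\Sigma)$ with $\Sigma_{11}=E_\lambda[\chi_1^2]$, $\Sigma_{22}=E_\lambda[\psi_1^2]$, $\Sigma_{12}=E_\lambda[\chi_1\psi_1]$. Because $\sum_{k\le n}\chi_k=(d(Z_{\tau_{n+1}})-d(Z_{\tau_1}))-\upsilon_\lambda(\tau_{n+1}-\tau_1)$ and $\sum_{k\le n}\psi_k=D_{\tau_{n+1}}-D_{\tau_1}$, a standard renewal/Anscombe argument (the number of regenerations before time $m$ is $\sim m/E_\lambda[T_1]$, and within-block fluctuations are $o(\sqrt m)$ thanks to the finite second moments) transfers this to a joint central limit theorem
\[
m^{-1/2}\bigl(d(Z_m)-m\upsilon_\lambda,\;D_m\bigr)\;\Rightarrow\;(X,Y)\sim\mathcal N\!\bigl(0,\,E_\lambda[T_1]^{-1}\Sigma\bigr),
\]
so that $E_\lambda[XY]=E_\lambda[\chi_1\psi_1]/E_\lambda[T_1]$, which is the claimed two-dimensional Gaussian.

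On the \emph{derivative side}, the score identity gives, for each fixed $n$,
\[
\partial_{\lambda'}E_{\lambda'}[d(Z_n)]\big|_{\lambda'=\lambda}=E_\lambda[d(Z_n)D_n]=E_\lambda[(d(Z_n)-n\upsilon_\lambda)D_n],
\]
using $E_\lambda[D_n]=0$; decomposing through regenerations and using independence across blocks shows $n^{-1}E_\lambda[(d(Z_n)-n\upsilon_\lambda)D_n]\to E_\lambda[\chi_1\psi_1]/E_\lambda[T_1]=E_\lambda[XY]$. Since $0\le d(Z_n)\le n$ we also have $n^{-1}E_\lambda[d(Z_n)]\to\upsilon_\lambda$, so it remains to justify the interchange $\upsilon'_\lambda=\partial_\lambda\bigl(\lim_n n^{-1}E_\lambda[d(Z_n)]\bigr)=\lim_n\partial_\lambda\bigl(n^{-1}E_\lambda[d(Z_n)]\bigr)$. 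I would obtain this by showing that the functions $\lambda\mapsto n^{-1}E_\lambda[d(Z_n)]$ are $C^1$ with derivatives converging uniformly on compact subsets of $(\lambda_c^{1/2},\mu)$: expanded over regeneration blocks, the per-block expectations are smooth in $\lambda$ with derivatives controlled by the (locally uniformly bounded) second moments of $T_1$, and the telescoping over blocks leaves only a negligible boundary contribution. Chaining the two halves then gives $\upsilon'_\lambda=\lim_n n^{-1}\partial_{\lambda'}E_{\lambda'}[d(Z_n)]\big|_\lambda=E_\lambda[XY]$.

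The \textbf{main obstacle} is precisely this last uniform control: one must differentiate the block expectations $E_\lambda[\Delta_1]$ and $E_\lambda[T_1]$ even though the integrand, the walk's law, and the very event defining a regeneration block all depend on $\lambda$. I would deal with this by realising the single-block law as an explicit $\lambda$-tilt of a reference block law, estimating the associated likelihood ratio on the truncated event $\{T_1\le M\}$ (where the accumulated score is at most $CM$, so the Taylor expansion in $\lambda'-\lambda$ has error of order $(\lambda'-\lambda)^2M^2$) and controlling the tail $\{T_1>M\}$ uniformly in $\lambda$ on compacts using the moment bounds of \cite{LPP3} together with the exponential offspring moment hypothesis, with $M=M(\lambda'-\lambda)\to\infty$ chosen to balance the two errors. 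The exponential offspring moment and the restriction $\lambda>\lambda_c^{1/2}$ enter exactly here, through the finiteness of $E_\lambda[T_1^2]$ and its local boundedness in $\lambda$, which is also what powers the CLT on the Gaussian side.
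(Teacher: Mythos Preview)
Your overall architecture is close to the paper's: both rely on the regeneration structure, a score/Girsanov identity for the path law, a joint CLT for $\bigl(d(Z_n)-n\upsilon_\lambda,\,D_n\bigr)/\sqrt n$, and the identification of the derivative with the Gaussian covariance. Where you and the paper diverge is in how the interchange of limits is organised, and this is precisely where your sketch has a genuine gap.

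\textbf{The first regeneration block under $\Pt_\lambda$ is not harmless.} You work throughout under $\Pt_\lambda$ and dismiss the boundary contribution from $[0,\tau_1]$ as ``negligible''. But as the paper explains in Remark~\ref{rem:tau1}, when $\lambda>1$ and $p_1>0$ the first regeneration time $\tau_1$ can fail even to have a finite \emph{first} moment under $\Pt_\lambda$, because the number of returns to the root is governed by the quenched escape probability, which has heavy tails. So the first block's contribution to $n^{-1}E_\lambda[(d(Z_n)-n\upsilon_\lambda)D_n]$ is not obviously $o(1)$, and your telescoping argument does not go through as stated. The paper's remedy is to replace $\Pt_\lambda$ by the non-backtracking measure $\Pt_\lambda^{\tt NB}$, under which $(\tau_1,d(Z_{\tau_1}))$ has the same law as the generic block; this makes the first block tame but creates a new difficulty, since the Girsanov identity (your score identity) no longer applies directly to a conditioned law and regeneration times are not stopping times. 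Lemma~\ref{lem:replace} is exactly the device that bridges this gap.

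\textbf{The uniform-in-$\lambda$ moment bounds are not in \cite{LPP3}.} Your final paragraph correctly identifies as the main obstacle the need for second moments of $T_1$ that are locally uniform in $\lambda$, but you attribute them to \cite{LPP3}. That reference gives no such uniformity; obtaining it (Proposition~\ref{p:UniMom}) is the principal technical contribution of the present paper and occupies Sections~\ref{s:uni}--\ref{s:reg}. In particular your proposed truncation at $\{T_1\le M\}$ with uniform tail control is the right shape, but the uniform tail control itself is the hard part, especially when $p_0>0$ and traps slow the walk.

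\textbf{A methodological remark.} Rather than differentiate at fixed $n$ and then pass to the limit (which forces you to prove uniform convergence of $\lambda\mapsto n^{-1}\partial_\lambda E_\lambda[d(Z_n)]$), the paper sends $h\to0$ and $n\to\infty$ simultaneously along $h^2n\to1$ (Proposition~\ref{appro}), writes the finite difference via the full Girsanov weight $\exp(hP_n-h^2Q_n+R_{n,h})$, and passes to the limit using the joint CLT together with uniform integrability (Lemma~\ref{ui} and the bound \eqref{eq:e2}). This packaging avoids your interchange-of-limits step and replaces it by a single uniform-integrability estimate, at the cost of having to analyse the whole exponential tilt rather than just its first-order term.
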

We remark here that $0\leq\lambda_c\leq\lambda_c^{1/2}<1$ since $0\leq\lambda_c<1$ and note that the covariance matrix of $(X,Y)$ is given in \eqref{var}.

In the unpublished note \cite{A2}, the differentiability of the function $\lambda\mapsto \upsilon_\lambda$ was shown for $0<\lambda<1$ in the case $p_0=0$, and an expression of the derivative was given which is based on the description of invariant measures for the environment seen from the particle obtained in \cite{A1}. 

A fluctuation-dissipation theorem FDT (see \cite{dede10,ku66}) suggests that the internal fluctuations of a system at equilibrium should be related to the response of the system to an external disturbance.
In the context of a random walk, this would suggest that the fluctuations of the walk should be related to the response of imposing a drift.
A widely held conjecture is that an FDT should hold in many random walk models (e.g.\ \cite{gamapi12,lero94}); however, it has been shown in \cite{cuku93} that this is violated by several mean-field spin glass models at low temperature due to slow dynamics and aging.
This is of particular interest due to the connections between spin-glasses and models of random walks in random trapping environments.
Some progress towards proving an FDT for a random walk on a supercritical GW-tree without leaves was made in \cite{arhuolze13} where it was shown that the diffusivity is equal to the mobility (the derivative of the speed with respect to the exterior force $\alpha_\lambda=\log(\mu/\lambda)$) at the diffusive point $\lambda=\mu$.
Understanding the relation between the diffusivity and the
mobility for $\lambda$ in the ballistic regime remains open.
%Proving this for $\lambda$ in the ballistic range remains an open problem.

It has been shown in \cite{B3} and \cite{PZ} that, under the conditions of Theorem \ref{dif}, there exists a constant $\varsigma\in(0,\infty)$ such that, for $\Pb$-a.e.\ $\Tr$,
\[B_t^n:=\frac{d(Z_{nt})-nt\upsilon_\lambda}{\varsigma\sqrt{n}}\]
converges in $\Pt^{\Tr}_\lambda$-distribution to a Brownian motion. In particular, the range of bias $(\lambda_c^{1/2},\mu)$ is precisely the range in which the walk is ballistic and a central limit theorem holds. We expect that the differentiability should extend to $(\lambda_c,\mu)$ however, our proof relies heavily on second moment bounds of regeneration times which only hold in the smaller range of bias.
  
The key ingredients of the proof are the \emph{renewal structure}, the \emph{discrete Girsanov formula} and suitable moment bounds on excursion times of random walks in GW-trees. 

The renewal structure allows paths of a random walk to be decomposed into \IID components. This technique is frequently used to analyse random walks in random environments as well as various other models in probability and statistical mechanics. \cite{LPP3} constructed the renewal structure for supercritical GW-trees, which we will heavily utilise in this paper. See \cite{BFS,BGN,DGPZ} for applications of this method to the analysis of the speed of random walks in random environments. In particular, we refer to the paper \cite{BGN}, where the authors study the speed of biased random walks on a random conductance model, since our strategy resembles theirs. See \cite{M} also for a study of a similar problem in the context of random walks on word-hyperbolic groups. 
% resembles, adapted from, imitates, follows, developed from

We now describe the discrete Girsanov formula which allows us to relate the walk for different values of the bias. Let $\Ts$ be a rooted infinite tree and $\bigl(\Fc_n(\Ts)\bigr)_{n\geq0}$ be the filtration on the probability space $(\tilde{\Omega}(\Ts),\Fc(\Ts),P_{\lambda}^{\Ts})$ generated by the $\lambda$-biased random walk $(Z_n)$ on $\Ts$. Then for an $\bigl(\Fc_n(\Ts)\bigr)$-stopping time $S$, an $\Fc_S(\Ts)$-measurable function $F:\tilde{\Omega}(\Ts)\rightarrow\mathbb{R}$ and $h\geq -\lambda$, we have that
\begin{align}\label{Gir}
E^{\Ts}_{\lambda+h}\left[F\bigl((Z_k)_{k\geq0}\bigr)\right]
=E^{\Ts}_{\lambda}\left[F\bigl((Z_k)_{k\geq0}\bigr)\prod_{i=1}^S\frac{A_{\lambda+h}(Z_{i-1},Z_i)}{A_{\lambda}(Z_{i-1},Z_i)}\right].
\end{align}
We remark here that regeneration times are not stopping times, thus the formula \eqref{Gir} does not apply directly
to them.
%\textcolor{red}{Added explanation!}
Moreover, we will mostly work with the annealed measure conditioned on {\it non-backtracking} in order to avoid bad behaviour of the first regeneration time. (See Remark \ref{rem:tau1} for details.)
The presence of the non-backtracking condition
is also an obstacle to apply the Girsanov formula. We will solve this problem using Lemma \ref{lem:replace}. 
%\textcolor{red}{some explanation about why we introduced the non-backtracking condition.}

In order to study the change in $\upsilon_\lambda$ as we vary the bias $\lambda$, we require control on the walk that is uniform in the bias. Specifically, due to the regeneration structure, it will suffice to control the variation of the walk within a single regeneration block. To this end, an important role is played by Proposition \ref{p:UniMom}, which gives a moment estimate of regeneration times that is uniform in the bias. Its proof is the main technical contribution of this paper and Sections \ref{s:uni}, \ref{s:mom} and \ref{s:reg} are entirely devoted to the fairly intricate arguments involved in it. 

%Since our proof essentially follows the strategy of the proofs in \cite{BGN,M}, we emphasize here that it is Proposition \ref{p:UniMom} that is the main technical contribution of this paper. 

The organisation of this paper is as follows; in Section \ref{s:ren}, we first introduce several basic facts on the renewal structure of GW-trees. In Section \ref{s:exp}, we will prove Theorem \ref{dif} using the formula \eqref{Gir}. We defer the more technical aspects concerning moments of regeneration times to Sections \ref{s:uni}, \ref{s:mom} and \ref{s:reg}. Specifically, in Section \ref{s:uni} we show that the uniform moment estimates for regenerations times hold for GW-trees without leaves, in Section \ref{s:mom} we prove a moment bound on the generation sizes of GW-trees and, finally, in Section \ref{s:reg} we combine these estimates to prove that the uniform moment estimates for regenerations times extends to the case with leaves.

\if0
\begin{comYuki}
I will add a couple of sentences to the introduction according to the following my opinions. Please let me know if you don't agree
with me on some of them.
\begin{itemize}
\item I believe it's very important to emphasize that our uniform moment estimate is the main technical contribution of this article.
 What I proved is somewhat common sense for experts of this subject.  
 %\item Our proof of the uniform moment estimate is very complicated. I think it's very good if we can give a brief explanation about reasons why it needs to be complicated by comparing the corresponding estimates in \cite{BGN}.
 \end{itemize}
 \end{comYuki}
 \fi

  \if0
 Let $\Tr$ be a Galton-Watson tree with offspring distribution $\{p_k\}_{k\geq0}$ such that $p_n\neq1$ for any $n\in\bn$.
 We will denote by $\mathbb{P}$ the distribution of the Galton-Watson tree.
 In this paper, we always assume
 that $\Tr$ is supercritical ({\it i.e.,} $m:=\sum_{k\geq1}kp_k>1$) and has no leaves ({\it i.e.,} $p_0=0$). 
 Fix $\lambda>0$. For a given infinite rooted tree $T$ without leaves,
 we consider
 the $\lambda$-{\it biased random walk} $(Z_n)_{n\geq0}$
 defined on a probability space $(\tilde{\Omega}(T),\Fc,P_{\lambda}^T)$
 starting at the root whose transition probabilities $\{A_{\lambda}(x,y)\}_{x,y\in T}$ are given as follows:
 denote the root of $T$ by $e(T)$, and for $x\in T$,
 the number of its offspring by $\nu(x)$. From the root $e(T)$, the random walk moves to one of its children equally likely, and
 from $x\neq e$ which has children $x_1,...,x_{\nu(x)}$, the random walk moves to one of neighbors of $x$ according to
 the following formula. 
\begin{align*}
A_{\lambda}(x,\pi(x)):=\frac{\lambda}{\lambda+\nu(x)},\ \ {\rm and}\ \ A_{\lambda}(x,x_i):=\frac{1}{\lambda+\nu(x)},\ \ {\rm for}\ 1\leq i\leq\nu(x),
\end{align*}
 where $\pi(x)$ is the parent of $x$.
 For $x,y\in T$, denote by $d(x,y)$ the distance between $x$ and $y$,
 and by $d(x)$ the distance between $e(T)$ and $x$.
 Behaviors of $\lambda$-biased random walks on the Galton-Watson tree $\Tr$
 have been extensively studied over decades. For instance,
 it is proved in \cite{L} that
 when $\lambda\in(0,m)$, the $\lambda$-biased random walk on the Galton-Watson tree $\Tr$ is transient $\Pb$-a.s.
 Later, it is shown in \cite{LPP1, LPP2} that when $\lambda\in(0,m)$ and $p_0=0$,
 the sequence $n^{-1}d(Z_n)$ converges $P_{\lambda}$-almost surely and in $L^1(P_{\lambda})$,
 where $P_{\lambda}$ is the so-called {\it annealed measure} and defined by
 \begin{align*}
 P_{\lambda}(\cdot):=\int \Pb({\rm d}\Tr)P_{\lambda}^{\Tr}(\cdot).
 \end{align*}
Moreover, it is proved in \cite{LPP1,LPP2} that
 the limit of $n^{-1}d(Z_n)$ is a deterministic positive constant, and usually called
 the {\it speed} of the $\lambda$-biased random walk on the Galton-Watson tree, and we will denote it by $\upsilon_\lambda>0$.
 \fi
 
\section{Renewal structure of Galton-Watson trees}\label{s:ren}
\if0
\subsection{Notation}
Throughout this paper, we regard a rooted tree as a subset of $\bn^{\infty}:=\{e\}\cup\bigcup_{k=1}^{\infty}\bn^k$
 in the following way: a rooted tree $\Ts$ is defined as a subset of $\bn^{\infty}$ with the following properties.
 \begin{itemize}
 \item $e\in \Ts$,
 \item whenever $x\in \Ts\setminus\{e\}$, we have $\pi(x)\in \Ts$, where $\pi(x)=x_1...x_{n-1}$ for
 $x=x_1...x_{n-1}x_n$, 
 \item if $x=x_1...x_n$, there exists $\nu(x)\in\bn$ such that $x_1...x_nj\in \Ts$ for any $1\leq j\leq\nu(x)$.
 \end{itemize}
 For $x=x_1...x_n$ and $y=x_1...x_ny_1...y_m$, define $x^{-1}y:=y_1...y_m$.
 \fi

In this section, we introduce  regeneration times and state their moment estimates, which will be very important for this study. 
 \begin{dfn}
For a rooted tree $\Ts$ and $x\in \Ts$, define
 $P_{\lambda,x}^{\Ts}(\cdot):=P_{\lambda}^{\Ts}(\cdot|Z_0=x)$. (Thus, $P^{\Ts}_{\lambda}=P^{\Ts}_{\lambda,e(\Ts)}$.)
We will denote the expectation with respect to $P_{\lambda}$ (resp. $P_{\lambda}^{\Ts}$) by $E_{\lambda}$ (resp.\ $E_{\lambda}^{\Ts}$).
\end{dfn}
\begin{dfn}\label{d:re}%\textcolor{red}{\bf Changed the definition of regeneration times.}
Let $(Z_n)_{n\geq0}$ be the $\lambda$-biased random walk on a rooted tree $\Ts$.
\begin{description}
\item[1] A time $n\in\Nb$ is called a regeneration time if $d(Z_n)> d(Z_k)$ for all $k<n$ and $d(Z_l)> d(Z_{n-1})$ for all $l>n$.
%\item[(2)] A time $n\in\mathbb{N}$ is called a level regeneration time if $d(Z_n)\neq d(Z_k)$ for all $k<n$ and $d(Z_k)\neq d(Z_{n-1})$ for all $k>n$.
\item[2] For $x\in \Ts$, define the first return time $\sigma_x$ by $\sigma_x:=\inf\{n\geq1\ ;\ Z_n=x\}$.
\end{description}
\end{dfn}
\begin{rem}
Regeneration times defined above are called {\it level-regeneration times} in \cite{DGPZ}, and are different from what
 are defined in \cite{LPP3}. 
\end{rem}
%\begin{dfn}\label{d:re}
%Let $(Z_n)_{n\geq0}$ be the $\lambda$-biased random walk on a rooted tree $\Ts$.
%\begin{description}
%\item[(1)] A time $n\in\Nb$ is called a regeneration time if $Z_n\neq Z_k$ for all $k<n$ and $Z_k\neq Z_{n-1}$ for all $k>n$.
%\item[(2)] A time $n\in\mathbb{N}$ is called a level regeneration time if $d(Z_n)\neq d(Z_k)$ for all $k<n$ and $d(Z_k)\neq d(Z_{n-1})$ for all $k>n$.
%\item[(2)] For $x\in \Ts$, define the first return time $\sigma_x$ by $\sigma_x:=\inf\{n\geq1\ ;\ Z_n=x\}$.
%\end{description}
%\end{dfn}
\begin{dfn}Let $\Ts$ be a rooted tree. 
\begin{description}
 \item[1] For $x\in \Ts$, define $\Ts(x)$ as the subtree of $\Ts$ which consists of $x$ and its descendants. The vertex $x$ is naturally regarded as the root of $\Ts(x)$.
 \item[2] We will denote by $\Ts^*$ a new tree obtained by adding to the graph $\Ts$ an edge
 connecting $e(\Ts)$ and a new vertex $e^*(\Ts)$.
 The vertex $e^*(\Ts)$ is considered as the root of $\Ts^*$ and the parent of $e(\Ts)$.
 We often write $e$ and $e^*$ for $e(\Ts)$ and $e^*(\Ts)$ when the tree is clear from context.
\end{description}
\end{dfn}
The usefulness of renewal structure and regeneration times is that they provide a way to decompose sample paths of random walks into \IID pieces. When we deal with random walks on graphs carrying good renewal structures, approximations using regeneration times often enable us to reduce the analysis of the statistical behaviour of random walks to that of \IID random variables. 

We note that a different sequence of regeneration times (called {\it super-regeneration times}) have been introduced in \cite{BFGH} which decouple the event of a regeneration from the structure of the tree. These are particularly useful in decomposing the walk; however, this definition of regeneration times is only suitable when $\lambda<1$ because it relies on comparison with a biased random walk on $\Zb$ with this bias. %We, therefore, use the more classical definition.

An important property is that, by Lemma 3.3 and Proposition 3.4 of \cite{LPP3}, for $\lambda\in(0,\mu)$ there exist, $P_{\lambda}$-a.s., infinitely many regeneration times $0=:\tau_0<\tau_1<\tau_2<....$ and the sequences $\{(\tau_{i+1}-\tau_i,d(Z_{\tau_{i+1}})-d(Z_{\tau_i})\}_{i\geq0}$ are \IID random vectors under $P_{\lambda}$. A useful fact is that the law of $\left(\tau_2-\tau_1,d(Z_{\tau_2})-d(Z_{\tau_1})\right)$ under the probability measure $P_{\lambda}$ is identical to the law of $\left(\tau_1,d(Z_{\tau_1})\right)$ under the probability measure $P_{\lambda}^{\tt NB}$, where
 \begin{align*}%\label{conditioned}
 P_{\lambda}^{\tt NB}(A):=
 \int\Pb({\rm d}\Tr)P_{\lambda,e(\mathbf{T})}^{\mathbf{T}^*}(A\cap\sigma_{e^*(\mathbf{T})}=\infty)\cdot
 \left(\int\Pb({\rm d}\Tr)P_{\lambda,e(\mathbf{T})}^{\mathbf{T}^*}(\sigma_{e^*(\mathbf{T})}=\infty)\right)^{-1}.
 \end{align*}
% This property is proved in \cite[(4.25)]{DGPZ} for what they call {\it level-regeneration times}, and its proof is valid for $\{\tau_i\}_{i\geq0}$ without any significant changes.
  Therefore, with respect to $P_{\lambda}^{\tt NB}$, the distribution of $\left(\tau_2-\tau_1,d(Z_{\tau_2})-d(Z_{\tau_1})\right)$ coincides with that of $\left(\tau_1,d(Z_{\tau_1})\right)$.
  We will denote by $E_{\lambda}^{\tt NB}$ the expectation with respect to $P_{\lambda}^{\tt NB}$.
\if0
\begin{prp}\label{re}
\begin{description}
Assume that $\lambda\in(\lambda_c,\mu)$. Then the following claims hold.
\item[(1)] There exist infinitely many regeneration times $0=:\tau_0<\tau_1<\tau_2<....$ and infinitely many
 level regeneration times $0=:r_0<r_1<r_2<...$ $P_{\lambda}$-a.s.
 \item[(2)] The sequence $\{(Z_{\tau_i}^{-1}Z_{\tau_i+k})_{0\leq k\leq\tau_{i+1}-\tau_i}\}_{i\geq0}$
 are independent $\mathbb{N}^{\infty}$-random variables
 under $P_{\lambda}$.
 Moreover, $\{(Z_{\tau_i}^{-1}Z_{\tau_i+k})_{0\leq k\leq\tau_{i+1}-\tau_i}\}_{i\geq1}$ are identically distributed.
 \item[(3)] The sequences $\{\tau_{i+1}-\tau_i,d(Z_{\tau_{i+1}})-d(Z_{\tau_i})\}_{i\geq0}$ and
 are \IID random vectors under $P_{\lambda}$.
\item[(4)]  The law of $\left(\tau_2-\tau_1,d(Z_{\tau_2})-d(Z_{\tau_1})\right)$
 under the probability measure $P_{\lambda}$ is identical to the law of
 $\left(\tau_1,d(Z_{\tau_1})\right)$ under the probability measure $P_{\lambda}^{\tt NB}$, where
 \begin{align*}
 P_{\lambda}^{\tt NB}(A):=
 \int\Pb({\rm d}\Tr)P_{\lambda,e(\Tr)}^{\Tr^*}(A\cap\sigma_{e^*(\Tr)}=\infty)\cdot
 \left(\int\Pb({\rm d}\Tr)P_{\lambda,e(\Tr)}^{\Tr^*}(\sigma_{e^*(\Tr)}=\infty)\right)^{-1}.
 \end{align*}
\end{description}
\end{prp}
\begin{proof}
The first claim and the third claim are shown in Lemma 3.3 and Proposition 3.4 of \cite{LPP3}.
 The second claim is clearly stated in the proof of Proposition 3.4 in \cite{LPP3}. 
 Therefore, we only need to prove the fourth claim. 
  We first introduce several stopping times,
 Define $S_1$ and $U_1$ by
 \begin{align*}
 &S_1:=1,\ \ U_1:=\inf\{n\geq 1\ ;\ Z_n\notin\Tr(Z_1)\}\}.
 \intertext{We further define $S_k$ and $U_k$ recursively by}
&S_k:=\inf\{n\geq U_{k-1}\ ;\ Z_n\notin\{Z_0,Z_1,...,Z_{U_{k-1}}\}\},
 \ \ U_{k}:=\inf\{n\geq S_{k-1}\ ;\ Z_n\notin\Tr(Z_{S_{k}})\}.
 \end{align*}
 Then we have $\tau_1=S_J$, where $J$ is the unique integer such that $S_{J}<\infty$ and $U_{J}=\infty$.
 Note that conditionally on $\Tr$, $(S_k)$ and $(U_k)$ are $(\Fc_n(\Tr))$-stopping times although $\tau_1$ is not.
 Now we get
 \begin{align*}
 P_{\lambda}\left[(Z_{\tau_1}^{-1}Z_{\tau_1+k})_{k\geq0}\in A\right]
 &=\sum_{j\geq1}P_{\lambda}\left[(Z_{S_j}^{-1}Z_{S_j+k})_{k\geq0}\in A,J=j\right]\\
 &=\sum_{j\geq1}\int\Pb({\rm d}\Tr)P^{\Tr}_{\lambda}\left[ (Z_{S_j}^{-1}Z_{S_j+k})_{k\geq0}\in A,
 S_j<\infty,U_{j}=\infty\right].
 \end{align*}
  In the proof of Proposition 3.4 in \cite{LPP3}, it is shown that the pair $<\Tr(Z_{\tau_1}), (Z_{\tau_1+k})_{k\geq1}>$
 is independent of $<\Tr\setminus\Tr(Z_{\tau_1})\cup\{X_{\tau_1}\},(Z_k)_{k\leq\tau_1}>$ under $P_{\lambda}$.
 Moreover, Lemma 3.2 in \cite{LPP3} implies that the law of $\Tr(Z_{S_j})$ is identical to that of $\Tr$ under $P_{\lambda}$.
 Thus, we obtain
 \begin{align*}
 &\sum_{j\geq1}\int\Pb({\rm d}\Tr)P^{\Tr}_{\lambda}\left[ (Z_{S_j}^{-1}Z_{S_j+k})_{k\geq0}\in A,S_j<\infty,U_{j}=\infty\right]\\
 =&\sum_{j\geq1}P_{\lambda}(S_j<\infty)P_{\lambda}\left[
 P_{\lambda,Z_{S_j}}^{(\Tr(Z_{S_j}))^*}[(Z_{S_j}^{-1}Z_{S_j+k})_{k\geq0}\in A,\sigma_{\pi(Z_{S_j})}=\infty]\right]\\
 =&\int\Pb({\rm d}\Tr)P_{\lambda,e(\Tr)}^{\Tr^*}[(Z_{k})_{k\geq0}\in A,\sigma_{e^*(\Tr)}=\infty]
 \sum_{j\geq1}P_{\lambda}(S_j<\infty).
 \end{align*}
 By substituting the whole space for $A$, we get
 \begin{align*}
  \sum_{j\geq1}P_{\lambda}(S_j<\infty)=\left(\int\Pb({\rm d}\Tr)P_{\lambda,e(\Tr)}^{\Tr^*}[\sigma_{e^*(\Tr)}=\infty]\right)^{-1}.
 \end{align*}
 This implies the conclusion.
\end{proof}
\fi

The following moment estimate of regeneration times which is uniform in $\lambda$ will play an important role in this paper. We note that $\lambda_c<1$ thus the condition $a<1$ is to ensure that $\log(\lambda_c)/\log(a)>0$. Since the proof is quite technical, we postpone it to  Sections \ref{s:uni}, \ref{s:mom} and \ref{s:reg}.
 
\begin{prp}\label{p:UniMom}
Suppose $[a,b] \subset(0,\mu)$ with $a<1$ and that there exists $\beta>1$ such that $\sum_{k=1}^{\infty}p_k\beta^k<\infty$.
 Then, for any $\alpha<\log(\lambda_c)/\log(a)$ we have
\[%\sup_{\lambda\in [a,b]}\Et_\lambda[\tau_1^{\alpha}]<\infty,\ \ {\it and} \ \
 %\left(
 \sup_{\lambda\in [a,b]}E_{\lambda}^{\tt NB}[\tau_1^{\alpha}]=\sup_{\lambda\in [a,b]}E_{\lambda}^{\tt NB}[(\tau_2-\tau_1)^{\alpha}]=%\right)
 \sup_{\lambda\in [a,b]}\Et_\lambda[(\tau_2-\tau_1)^{\alpha}]<\infty.\] 
\end{prp}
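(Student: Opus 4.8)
The plan is to establish the uniform moment bound by first reducing the three quantities to one. By the identity stated in Section~\ref{s:ren} (from Lemma~3.3 and Proposition~3.4 of \cite{LPP3}), the law of $(\tau_2-\tau_1, d(Z_{\tau_2})-d(Z_{\tau_1}))$ under $\Pt_\lambda$ coincides with the law of $(\tau_1, d(Z_{\tau_1}))$ under $P_\lambda^{\tt NB}$, so $\Et_\lambda[(\tau_2-\tau_1)^\alpha]=E_\lambda^{\tt NB}[\tau_1^\alpha]$; and since $\tau_2-\tau_1$ has the same law as $\tau_1$ under $P_\lambda^{\tt NB}$ by the i.i.d.\ regeneration structure, all three suprema are equal. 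Thus it suffices to bound $\sup_{\lambda\in[a,b]}E_\lambda^{\tt NB}[\tau_1^\alpha]$.

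For the single bound, I would first dispose of the leafless case. When $p_0=0$ the tree $\Tr=\Tr_g$ has no traps, $\lambda_c=0$, and the condition $\alpha<\log(\lambda_c)/\log(a)=+\infty$ is vacuous; here the uniform moment bound is the content of Section~\ref{s:uni}, which I would simply invoke. (The mechanism there is that on a leafless tree with uniformly bounded-below drift, the regeneration time has exponential tails uniformly in $\lambda\in[a,b]$, using that $a>0$ controls the chance of moving forward and a Borel--Cantelli / geometric-series argument on the nested stopping times $S_k,U_k$ makes $J$ have a geometric tail, while each excursion $U_k-S_k$ is stochastically dominated uniformly in $\lambda$.) The work is in transferring this to the case $p_0>0$. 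Here I would decompose a regeneration block of the walk on $\Tr$ into (i) the "backbone part" — the trajectory projected onto $\Tr_g$, which under $P_\lambda^{\tt NB}$ behaves like a biased walk on a leafless GW-tree and has uniformly good moments by the leafless case — and (ii) the total time spent in excursions into traps. For (ii), each time the walk sits at a backbone vertex $x$ it may step into one of the finitely many $h$-GW-traps rooted at $x$; the excursion time inside a trap rooted at a bud is the hitting time of the trap's root by a $\lambda$-biased walk, and since the trap is a subcritical ($h$-)GW-tree, this time has a tail governed by $\lambda_c=f'(q)$: the probability that an excursion lasts time $\geq t$ decays like $t^{-\log(1/\lambda_c)/\log(1/a)}$ uniformly for $\lambda\in[a,b]$ (this is the standard "depth of trap vs.\ escape probability" estimate — a trap of depth $k$ is escaped with probability $\asymp$ (something)$/\lambda^k$ per attempt, so the walk spends $\asymp \lambda^{-k}$ time there, and the number of traps of depth $\geq k$ visited before regeneration is controlled by the subcriticality $\Pb(\text{depth}\geq k)\leq C q_*^k$ where the interplay of $q_*$ and $\lambda$ produces exactly the exponent $\log(\lambda_c)/\log(a)$). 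Summing the per-trap contributions over the uniformly-many traps visited in one regeneration block, and using that the number of distinct backbone vertices visited before $\tau_1$ has uniform exponential moments from the leafless analysis, I would conclude that $\tau_1$ under $P_\lambda^{\tt NB}$ has a polynomial tail of the claimed order $\alpha$, uniformly in $\lambda\in[a,b]$.

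To make the trap-counting rigorous I expect to need the moment bound on generation sizes of GW-trees proved in Section~\ref{s:mom} (this is where the hypothesis $\sum_k p_k\beta^k<\infty$ enters): it controls the number of buds, hence the number of traps, hanging off each backbone vertex, so that a single visit to a backbone vertex cannot spawn too many simultaneous trap excursions, and it controls $\nu(x)$ so the transition probabilities $A_\lambda(x,\cdot)$ do not degenerate. Assembling (i) and (ii) — backbone time plus trap time — via Section~\ref{s:reg} then gives the result.

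The main obstacle will be step (ii): obtaining the sharp tail exponent $\log(\lambda_c)/\log(a)$ for the time spent in traps, uniformly over $\lambda\in[a,b]$. The difficulty is that the worst case is $\lambda=a$ (slowest escape from deep traps), and one must track precisely how the escape-time tail of a $\lambda$-biased walk from a subcritical GW-trap depends jointly on the trap's random depth and on $\lambda$; getting the constant in the exponent exactly right — rather than just "some polynomial tail" — requires carefully balancing the large-deviation rate for the trap depth against the exponential-in-depth cost of escaping, and then checking the bound survives the supremum over $\lambda$ and the passage from a single trap to a full regeneration block. This is precisely why the authors devote three entire sections to Proposition~\ref{p:UniMom}.
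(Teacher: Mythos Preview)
Your proposal is essentially correct and follows the paper's own route: reduce the three suprema to one via the regeneration identities, treat the leafless case as a black box (Section~\ref{s:uni}), then use the Harris decomposition to write $\tau_1=\sum_{k=1}^{\zeta_1}\chi_k$ with $\zeta_1$ the backbone regeneration time and $\chi_k$ the excursion-laden time between backbone steps, and finally control the trap excursions via the moment bounds on subcritical GW generation sizes (Section~\ref{s:mom}). Your heuristic for the threshold $\alpha<\log(\lambda_c)/\log(a)$ is exactly the one the paper makes rigorous in Lemma~\ref{l:subComp}: the $\alpha$th moment of the time spent at depth $r$ in a trap scales like $a^{-r\alpha}$ in the worst case $\lambda=a<1$, while the expected product of generation sizes contributes $\lambda_c^r$ (Lemma~\ref{l:GWProd}), and the sum $\sum_r a^{-r\alpha}\lambda_c^r$ converges precisely when $a^{-\alpha}\lambda_c<1$.

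One correction is warranted. Your parenthetical description of the leafless mechanism is wrong: you claim ``the regeneration time has exponential tails uniformly in $\lambda\in[a,b]$'' via a geometric argument on $S_k,U_k$. For $\lambda\ge 1$ there is no uniform positive drift to exploit, the excursions $U_k-S_k$ are \emph{not} stochastically dominated uniformly in $\lambda$ by anything simple, and the paper does not obtain exponential tails --- Proposition~\ref{p:UnMo} gives only all polynomial moments, which is all that is needed downstream. The actual argument is quite different from what you sketch: it combines the uniform exponential moments of the regeneration \emph{distance} $d(Z_{\tau_1})$ (Lemma~\ref{l:empMom}) with a delicate hitting-time analysis of $\Pt_\lambda(\Delta_n>kn^{10},\sigma_{e^*}>kn^{10})$ via the events $\Ac_{1,k,n},\Ac_{2,k,n}$, and uses a Girsanov comparison (Lemma~\ref{l:Gir}) to push the estimate from $\lambda=1$ down to $\lambda\in[1-\varepsilon,1]$. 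Since you explicitly invoke Section~\ref{s:uni} as a black box this does not invalidate your outline, but you should not expect the leafless case to be the easy part --- indeed the paper calls the uniform bound there ``the main technical contribution.''
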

If $a>\lambda_c^{1/2}$ then $\log(\lambda_c)/\log(a)>2$ therefore we immediately have the following corollary.
\begin{cly}\label{c:2pe}
Suppose $[a,b] \subset(\lambda_c^{1/2},\mu)$ and that there exists $\beta>1$
 such that $\sum_{k=1}^{\infty}p_k\beta^k<\infty$.
 Then, for some $\varepsilon>0$ we have
\[%\sup_{\lambda\in [a,b]}\Et_\lambda[\tau_1^{2+\varepsilon}]<\infty,\ \ {\it and} \ \ 
%\left(
\sup_{\lambda\in [a,b]}E_{\lambda}^{\tt NB}[\tau_1^{2+\varepsilon}]=\sup_{\lambda\in [a,b]}E_{\lambda}^{\tt NB}[(\tau_2-\tau_1)^{2+\varepsilon}]=%\right)
\sup_{\lambda\in [a,b]}\Et_\lambda[(\tau_2-\tau_1)^{2+\varepsilon}]<\infty.\] 
\end{cly}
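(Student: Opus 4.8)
The plan is to deduce Corollary \ref{c:2pe} directly from Proposition \ref{p:UniMom} by choosing the exponent appropriately, after first reducing to the hypothesis $a<1$ under which that proposition is stated.

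First I would carry out that reduction. Since $[a,b]\subset(\lambda_c^{1/2},\mu)$ and, as noted after Theorem \ref{dif}, $0\le\lambda_c<1$, we have $\lambda_c^{1/2}<\min(a,1)$, so the interval $(\lambda_c^{1/2},\min(a,1))$ is nonempty; pick any $a'$ in it when $a\ge1$, and set $a'=a$ when $a<1$. In either case $[a,b]\subseteq[a',b]\subset(0,\mu)$ with $a'<1$ and $a'>\lambda_c^{1/2}$. Moreover, for each fixed $\lambda$ the three quantities in the corollary are equal: by the distributional identities recorded in Section \ref{s:ren} (the law of $\tau_2-\tau_1$ under $P_\lambda$ coincides with that of $\tau_1$ under $P_\lambda^{\tt NB}$, and under $P_\lambda^{\tt NB}$ the law of $\tau_2-\tau_1$ coincides with that of $\tau_1$), one has $\Et_\lambda[(\tau_2-\tau_1)^{\alpha}]=E_\lambda^{\tt NB}[\tau_1^{\alpha}]=E_\lambda^{\tt NB}[(\tau_2-\tau_1)^{\alpha}]$. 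Hence the three suprema over $[a,b]$ are equal and each is at most the corresponding supremum over $[a',b]$, so it suffices to bound the latter.

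Next I would verify that $\log(\lambda_c)/\log(a')>2$. If $\lambda_c=0$ this reads $+\infty>2$, and Proposition \ref{p:UniMom} then holds for every exponent; in particular one may take $\varepsilon=1$. If $\lambda_c\in(0,1)$, then $a'>\lambda_c^{1/2}$ gives $\log a'>\tfrac12\log\lambda_c$, and since $\log a'<0$, dividing through reverses the inequality to yield $\log(\lambda_c)/\log(a')>2$. I would then fix any $\alpha$ with $2<\alpha<\log(\lambda_c)/\log(a')$ and set $\varepsilon:=\alpha-2>0$. Applying Proposition \ref{p:UniMom} to the interval $[a',b]$ with exponent $\alpha=2+\varepsilon$ gives
\[
\sup_{\lambda\in[a',b]}E_{\lambda}^{\tt NB}[\tau_1^{2+\varepsilon}]
=\sup_{\lambda\in[a',b]}E_{\lambda}^{\tt NB}[(\tau_2-\tau_1)^{2+\varepsilon}]
=\sup_{\lambda\in[a',b]}\Et_\lambda[(\tau_2-\tau_1)^{2+\varepsilon}]<\infty ,
\]
and restricting each supremum from $[a',b]$ down to $[a,b]$ completes the proof.

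Since the corollary is essentially a bookkeeping consequence of Proposition \ref{p:UniMom}, there is no real obstacle internal to it: the only points needing care are the reduction to $a<1$ (the statement as written permits $a\ge1$ when $\mu$ is large, a case in which $\log(\lambda_c)/\log(a)$ need not even be positive) and the degenerate case $\lambda_c=0$. All the genuine difficulty lies upstream, in establishing the uniform-in-$\lambda$ moment bound of Proposition \ref{p:UniMom}, which is deferred to Sections \ref{s:uni}, \ref{s:mom} and \ref{s:reg}.
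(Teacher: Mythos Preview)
Your proof is correct and follows the same approach as the paper, which simply remarks that $a>\lambda_c^{1/2}$ forces $\log(\lambda_c)/\log(a)>2$ and then invokes Proposition~\ref{p:UniMom}. You are in fact more careful than the paper: you explicitly handle the possibility $a\ge1$ (where Proposition~\ref{p:UniMom} is not directly stated) by shrinking to $a'<1$, and you separate out the degenerate case $\lambda_c=0$, both of which the paper leaves implicit.
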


\begin{rem}\label{rem:tau1}
In general, $\tau_1$ does not satisfy these good moment estimates under the law $\Pt_\lambda$ which is one of the reasons that we use instead the law $\Pt_\lambda^{\tt NB}$. 
This problem arises from the number of excursions of the random walk from the root to itself until the walk
 escapes. Denote by ${\sf RE}$ the number of visits of the walk to the root, we have that $\tau_1\geq{\sf RE}\ P_{\lambda}\mathchar`-a.s.$ Moreover, ${\sf RE}$ under $P_{\lambda}^{\mathbf{T}}$ is distributed as the geometric random variable 
 whose termination probability is the quenched escape probability $P_{\lambda}^{\mathbf{T}}(\sigma_{e^*(\mathbf{T})}=\infty)$.
 
Suppose that $p_1>0$ and $\lambda>1$. 
  We denote by $R^{\lambda}(\Tr)$ the effective resistance from $e(\Tr)$ to $\infty$ of $\Tr$ where $\Tr$ is regarded as 
  the electric network corresponding to the $\lambda$-biased random walk. 
 Let us consider the event where every individual up to $n\th$ generation 
 gives birth to only one child, which occurs with probability $p_1^n$. 
 On this event, $R^{\lambda}(\Tr)$ is of order $\lambda^n$. This observation implies that
 $$\Pb(R^{\lambda}(\Tr)>n)\geq cn^{\log p_1/\log\lambda}$$
 for some constant $c>0$. (Note that $\log p_1<0$ since $0<p_1<1$.)
 In the light of a well-known fact in the theory of electric networks and reversible random walks (see Theorem 2.11 of \cite{Ba17} for instance), the above estimate implies that ${\sf RE}$ (therefore $\tau_1$ also) does not even satisfy a finite first moment in general.
 \end{rem}
 
We also need the following estimates in what follows. 
Their proofs will also be given in Sections \ref{s:uni}, \ref{s:mom} and \ref{s:reg}.
%\textcolor{red}{I believe the following claim is true since what annoys us is the number of excursions from the root, not length of them.}
\begin{prp}\label{est:sigma}
Suppose that there exists $\beta>1$ such that $\sum_{k=1}^{\infty}p_k\beta^k<\infty$. Then for any $\lambda\in(\lambda_c,\mu)$, we have that  
\[\Eb\left[E_{\lambda}^{\mathbf{T}^*}\left[\sigma_{e^*(\mathbf{T})}\ind_{\{\sigma_{e^*(\mathbf{T})}<\infty\}}\right]\right]<\infty.\]
\end{prp}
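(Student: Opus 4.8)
Write $e=e(\Tr)$, $e^{*}=e^{*}(\Tr)$, and run the walk on $\Tr^{*}$ from $e$ (this changes $E^{\Tr^{*}}_{\lambda}[\sigma_{e^{*}}\ind_{\{\sigma_{e^{*}}<\infty\}}]$ by at most $1$). Decomposing into local times, $\sigma_{e^{*}}\ind_{\{\sigma_{e^{*}}<\infty\}}=\sum_{y\in\Tr}L_{y}\ind_{\{\sigma_{e^{*}}<\infty\}}$ with $L_{y}$ the number of visits to $y$ before $\sigma_{e^{*}}$, the strong Markov property gives $E^{\Tr^{*}}_{\lambda,e}[L_{y}\ind_{\{\sigma_{e^{*}}<\infty\}}]=G^{\circ}(e,y)\,P^{\Tr^{*}}_{y}(\sigma_{e^{*}}<\infty)$, where $G^{\circ}$ is the Green's function of the walk on $\Tr^{*}$ killed at $e^{*}$. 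Using $P^{\Tr^{*}}_{y}(\sigma_{e^{*}}<\infty)\le P^{\Tr}_{y}(\sigma_{e}<\infty)$, the reversibility identity $G^{\circ}(e,y)=\tfrac{c(y)}{c(e)}P^{\Tr}_{y}(\sigma_{e}<\infty)\,G^{\circ}(e,e)$, where $c(\cdot)$ are the conductances of the $\lambda$‑biased walk (so $c(y)\asymp\lambda^{-d(y)}(\lambda+\nu(y))$), and $G^{\circ}(e,e)\le c(e)/\lambda$, one obtains
\[
E^{\Tr^{*}}_{\lambda}\bigl[\sigma_{e^{*}}\ind_{\{\sigma_{e^{*}}<\infty\}}\bigr]\ \le\ 1+\frac{1}{\lambda}\sum_{y\in\Tr}c(y)\,\bigl(P^{\Tr}_{y}(\sigma_{e}<\infty)\bigr)^{2}.
\]
(Equivalently, reorganising the sum and using that the last visit time $\mathcal{L}_{e}$ to $e$ satisfies $\mathcal{L}_{e}+1\le\tau_{1}$, the left side is at most $E^{\Tr^{*}}_{\lambda,e}[\tau_{1}\mid\sigma_{e^{*}}=\infty]$.) It is essential that the hitting probability is \emph{squared}: the bound by the first power already diverges after taking $\Eb$ (level $n$ has $\approx\mu^{n}$ vertices while $P^{\Tr}_{y}(\sigma_{e}<\infty)$ is typically of order $(\lambda/\mu)^{n}$, so $\lambda^{-n}\mu^{n}(\lambda/\mu)^{n}$ does not sum, whereas $\lambda^{-n}\mu^{n}(\lambda/\mu)^{2n}=(\lambda/\mu)^{n}$ does). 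So it suffices to prove $\Eb\bigl[\sum_{y\in\Tr}\lambda^{-d(y)}(\lambda+\nu(y))\,(P^{\Tr}_{y}(\sigma_{e}<\infty))^{2}\bigr]<\infty$.

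\textbf{Reduction to the backbone.}
The function $y\mapsto P^{\Tr}_{y}(\sigma_{e}<\infty)$ depends only on $\Tr_{g}$: on a trap it equals the hitting probability of $e$ from the backbone vertex carrying that trap (a trap is finite, hence exited a.s.), and on $\Tr_{g}$ it is $\lambda$‑harmonic with the same boundary data as $P^{\Tr_{g}}_{\cdot}(\sigma_{e}<\infty)$ — the traps are dead ends carrying no net current — so the two coincide. Splitting the sum over $\Tr$ into its backbone part and the trap parts, and invoking the generation‑size estimates of Sections \ref{s:mom}--\ref{s:reg} (a trap is a subcritical $h$‑GW‑tree, whose $j$‑th generation has mean size $\lambda_{c}^{\,j}$, and the number of traps attached to a vertex has exponential moments), the trap contribution is dominated by a geometric series with ratio $\lambda_{c}/\lambda$, convergent precisely because $\lambda>\lambda_{c}$. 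Everything is thereby reduced to a bound of the form $\sum_{k\ge0}\lambda^{-k}\,\Eb_{g}\bigl[\sum_{w\in\Tr_{g},\,d(w)=k}\omega(w)\,(P^{\Tr_{g}}_{w}(\sigma_{e}<\infty))^{2}\bigr]<\infty$, where $\Eb_{g}$ is expectation over the backbone $g$‑GW‑law and $\omega(w)$ has exponential moments; this is a statement purely about supercritical GW‑trees without leaves, and in the leafless case ($\lambda_{c}=0$) it is the whole claim.

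\textbf{The leafless estimate and the main obstacle.}
It remains to show $\Eb_{g}\bigl[\sum_{d(w)=k}\omega(w)(P^{\Tr_{g}}_{w}(\sigma_{e}<\infty))^{2}\bigr]\le C\rho^{\,k}$ for some $\rho<\lambda$. By reversibility on $\Tr_{g}$, $c(w)P^{\Tr_{g}}_{w}(\sigma_{e}<\infty)=\tfrac{c(e)}{G_{\Tr_{g}}(e,e)}G_{\Tr_{g}}(e,w)$, so the sum is comparable to $\lambda^{2k}\sum_{d(w)=k}G_{\Tr_{g}}(e,w)^{2}/(\lambda+\nu(w))^{2}$, and $\sum_{d(w)=k}G_{\Tr_{g}}(e,w)^{2}$ is the expected number of pairs $(s,t)$ with $Z_{s}=\widetilde Z_{t}$ at level $k$ for two independent walks, which by ballisticity of the leafless walk (valid for all $\lambda\in(0,\mu)$) concentrates on an $O(1)$ time window and decays exponentially in $k$; this yields $\rho=\lambda^{2}/\mu<\lambda$. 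I expect Step~3 to be the hard part, for two reasons. First, the hitting probabilities from distinct level‑$k$ vertices are strongly correlated — they share ancestral vertices whose exit probabilities involve the whole subtree below — so a single‑level recursion does not close (its multiplicative constant is $\mu/\lambda>1$), and one must route the estimate through the Green's‑function / two‑walk representation (or a suitable recursive tree process) where the correlations collapse. Second, the extremal configurations are the rare trees carrying a long degree‑one chain near the root; these are exactly what makes $\tau_{1}$ heavy‑tailed under $\Pt_{\lambda}$ (Remark~\ref{rem:tau1}), and the point is that the squaring of the hitting probability — equivalently, the factor accompanying the large quantity $E^{\Tr}_{e}[\mathcal{L}_{e}]$ in the reorganised sum — supplies exactly the compensation keeping the annealed expectation finite. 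Making this compensation rigorous, uniformly over the tree, is where the technical work of Sections \ref{s:uni}--\ref{s:reg} enters.
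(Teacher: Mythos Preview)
Your Green's function / local-time decomposition in Steps 1--2 is correct and genuinely different from the paper's argument, but Step 3 is a real gap. The paper takes a much shorter route: on $\{\sigma_{e^*}<\infty\}$ one trivially has $\sigma_{e^*}\le\tau_1$ (any regeneration before $\sigma_{e^*}$ would preclude the return), so the leafless case (Lemma~\ref{lem:est:sigma}) reduces immediately to moment bounds for $\tau_1$ --- for $\lambda\le 1$ via classical references, and for $\lambda>1$ by rerunning the proof of Proposition~\ref{p:UnMo} with the indicator $\{\sigma_{e^*}<\infty\}$ in place of $\{\sigma_{e^*}=\infty\}$ (the key estimate \eqref{est:suff} already controls $\Pt_\lambda(\Delta_n>kn^{10},\,\sigma_{e^*}>kn^{10})$, which is exactly what is needed). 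The passage from the leafless case to leaves then goes as in the proof of Proposition~\ref{p:UniMom}, decomposing into backbone transitions and trap excursions via Lemma~\ref{l:subComp}.

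Your Step 3, by contrast, asks for the annealed decay estimate $\Eb_g\bigl[\sum_{d(w)=k}\omega(w)(P^{\Tr_g}_w(\sigma_e<\infty))^2\bigr]\le C\rho^{k}$ with $\rho<\lambda$. This is \emph{not} what Sections~\ref{s:uni}--\ref{s:reg} provide --- those sections give regeneration-time moments, not level-by-level hitting-probability decay --- so your closing sentence is misleading. The two-walk collision heuristic is plausible, but turning it into a proof requires controlling the annealed tail of the level at which two independent ballistic walks separate, which is a separate and not entirely trivial computation that you have not carried out. Ironically, your own parenthetical in Step~1 (reorganising via the last-exit time $\mathcal{L}_e<\tau_1$) is much closer to the paper's one-line reduction; had you pursued it instead of the Green's-function detour, you would have landed directly on the paper's argument. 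The reversibility computation is illuminating about \emph{why} the squared hitting probability appears and why the annealed expectation stays finite despite Remark~\ref{rem:tau1}, but as a proof strategy it creates more work than it saves.
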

\begin{prp}\label{p:esc-conti}
The function
\[\lambda\mapsto\Eb\left[P_{\lambda}^{\Tr^*}\left(\sigma_{e^*(\Tr)}=\infty\right)\right]\]
is continuous on $(\lambda_c,\mu)$.
\end{prp}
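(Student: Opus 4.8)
The plan is to write the function in question as $\lambda\mapsto\Eb[\phi(\lambda,\Tr)]$ with $\phi(\lambda,\Tr):=P_\lambda^{\Tr^*}(\sigma_{e^*}=\infty)$, to first establish that it is non-increasing, and then to prove left- and right-continuity separately. For the monotonicity I would view the $\lambda$-biased walk on $\Tr^*$ as the reversible walk attached to the conductances $c(\pi(x),x)=\lambda^{-d(x)}$ on $\Tr$ together with $c(e^*,e)=1$. Rayleigh's monotonicity principle then shows that, for every fixed $n$, the function $\lambda\mapsto P_\lambda^{\Tr^*}(T_n<\sigma_{e^*})$ is non-increasing, where $T_n$ is the hitting time of the $n$-th generation of $\Tr$. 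Since the walk on $\Tr$ is transient for $\lambda\in(0,\mu)$, on $\{\sigma_{e^*}=\infty\}$ it reaches every generation, so $\phi(\lambda,\Tr)=\inf_n P_\lambda^{\Tr^*}(T_n<\sigma_{e^*})$, and hence $\phi(\cdot,\Tr)$, and therefore $\Eb[\phi(\cdot,\Tr)]$, is non-increasing on $(0,\mu)$.

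Both one-sided limits would be accessed through the discrete Girsanov formula \eqref{Gir}, which applies to the genuine stopping time $\sigma_{e^*}$: applying \eqref{Gir} with $S=m$ and $F=\ind_{\{\sigma_{e^*}=m\}}$ and summing over $m\in\Nb$ gives, for $\lambda+h\in(\lambda_c,\mu)$,
\[
\Eb\bigl[P_{\lambda+h}^{\Tr^*}(\sigma_{e^*}<\infty)\bigr]
=\Eb\Bigl[E_\lambda^{\Tr^*}\Bigl[\ind_{\{\sigma_{e^*}<\infty\}}\prod_{i=1}^{\sigma_{e^*}}\frac{A_{\lambda+h}(Z_{i-1},Z_i)}{A_\lambda(Z_{i-1},Z_i)}\Bigr]\Bigr].
\]
As $h\uparrow0$ the integrand is non-negative and converges $P_\lambda$-a.s.\ to $\ind_{\{\sigma_{e^*}<\infty\}}$ (on $\{\sigma_{e^*}<\infty\}$ it is a finite product of factors tending to $1$), so Fatou's lemma gives $\liminf_{h\uparrow0}\Eb[P_{\lambda+h}^{\Tr^*}(\sigma_{e^*}<\infty)]\ge\Eb[P_\lambda^{\Tr^*}(\sigma_{e^*}<\infty)]$, i.e.\ $\limsup_{h\uparrow0}\Eb[\phi(\lambda+h,\Tr)]\le\Eb[\phi(\lambda,\Tr)]$; combined with monotonicity of $\Eb[\phi(\cdot,\Tr)]$ this yields left-continuity on $(\lambda_c,\mu)$.

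Right-continuity is the substantial point. For $h>0$ and a truncation level $M$ I would split the right-hand side of the display according to $\{\sigma_{e^*}\le M\}$ and $\{M<\sigma_{e^*}<\infty\}$. On the first event each change-of-measure factor equals $\tfrac{\lambda+\nu(x)}{\lambda+h+\nu(x)}\le1$ for a down-step and $\tfrac{\lambda+h}{\lambda}\cdot\tfrac{\lambda+\nu(x)}{\lambda+h+\nu(x)}\in[1,1+h/\lambda]$ for an up-step, so the product is bounded and converges to $1$; dominated convergence makes this contribution tend to $\Eb[P_\lambda^{\Tr^*}(\sigma_{e^*}\le M)]\le\Eb[P_\lambda^{\Tr^*}(\sigma_{e^*}<\infty)]$ as $h\downarrow0$. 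On the second event the same path-by-path change of measure (now summed over $m>M$) identifies the corresponding annealed quantity as $\Eb[P_{\lambda+h}^{\Tr^*}(M<\sigma_{e^*}<\infty)]$, and Markov's inequality bounds this by $M^{-1}\Eb[E_{\lambda+h}^{\Tr^*}[\sigma_{e^*}\ind_{\{\sigma_{e^*}<\infty\}}]]$. The essential input is then a uniform-in-bias strengthening of Proposition \ref{est:sigma}:
\[
\sup_{\lambda'\in[\lambda,\lambda+h_0]}\Eb\bigl[E_{\lambda'}^{\Tr^*}[\sigma_{e^*}\ind_{\{\sigma_{e^*}<\infty\}}]\bigr]<\infty
\]
for some small $h_0>0$ with $[\lambda,\lambda+h_0]\subset(\lambda_c,\mu)$, which is exactly the kind of bound the estimates developed in Sections \ref{s:uni}, \ref{s:mom} and \ref{s:reg} provide. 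Granting this, the second contribution is $O(1/M)$ uniformly in $h\in(0,h_0]$, so $\limsup_{h\downarrow0}\Eb[P_{\lambda+h}^{\Tr^*}(\sigma_{e^*}<\infty)]\le\Eb[P_\lambda^{\Tr^*}(\sigma_{e^*}<\infty)]+O(1/M)$ for every $M$; letting $M\to\infty$ and passing to complements gives $\liminf_{h\downarrow0}\Eb[\phi(\lambda+h,\Tr)]\ge\Eb[\phi(\lambda,\Tr)]$, which with monotonicity yields right-continuity, hence continuity on $(\lambda_c,\mu)$.

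The hard part is precisely this uniform moment bound, and it is what forces the proof to be deferred to the technical sections. A straightforward dominated-convergence argument on the Radon–Nikodym product is hopeless: the crude estimate $\prod_{i=1}^{\sigma_{e^*}}\tfrac{A_{\lambda+h}(Z_{i-1},Z_i)}{A_\lambda(Z_{i-1},Z_i)}\le(1+Ch)^{\sigma_{e^*}}$ is far too lossy because, for $\lambda>1$, trapping makes $\sigma_{e^*}$ on $\{\sigma_{e^*}<\infty\}$ heavy-tailed (only polynomially many moments), so no single dominating function works uniformly in $h$; one genuinely has to control the excursion time $\sigma_{e^*}$ uniformly in the bias, which is one of the outputs of the regeneration/excursion-time analysis of Sections \ref{s:uni}–\ref{s:reg}.
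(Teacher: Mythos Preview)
Your argument is correct in outline (modulo a harmless swap: for $h>0$ it is the step to a child whose ratio is $\frac{\lambda+\nu(x)}{\lambda+h+\nu(x)}\le1$ and the step to the parent that picks up the extra factor $\frac{\lambda+h}{\lambda}$), but the paper takes a different and somewhat lighter route. The paper first observes that $P_\lambda^{\Tr^*}(\sigma_{e^*}=\infty)=P_\lambda^{\Tr_g^*}(\sigma_{e^*}=\infty)$, so one may assume $p_0=0$; then, for each fixed $n$, the map $\lambda\mapsto\Eb[P_\lambda^{\Tr^*}(\sigma_{e^*}<n)]$ is continuous simply because it depends on finitely many steps and, after truncating the degrees, on finitely many tree shapes. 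Continuity then follows from the uniform tail estimate $\sup_{\lambda\in[a,b]}\Eb[P_\lambda^{\Tr^*}(n<\sigma_{e^*}<\infty)]\to0$ (Lemma~\ref{l:Betn}), whose proof needs only the exponential tail of the first regeneration \emph{distance} (Lemma~\ref{l:empMom}) and a comparison with a biased walk on $\Zb^+$, not any moment of $\sigma_{e^*}$ itself.

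Your approach via Girsanov plus monotonicity and Fatou is perfectly valid, but it asks for more: a uniform-in-$\lambda$ first moment of $\sigma_{e^*}\ind_{\{\sigma_{e^*}<\infty\}}$, which is a genuine strengthening of Proposition~\ref{est:sigma} (stated only pointwise). This can indeed be extracted from the machinery of Sections~\ref{s:uni}--\ref{s:reg}, but note that after the backbone reduction your worry about heavy tails largely evaporates: on $\Tr_g^*$ there are no traps, and Lemma~\ref{lem:est:sigma} gives all moments, so the difficulty you flag is an artifact of not having made that reduction first. The paper's argument avoids the Girsanov formula and the moment bound altogether, trading them for the more elementary observation that finite-step probabilities are automatically continuous in $\lambda$.
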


%Using a similar proof to that of Proposition \ref{p:UniMom}, under the assumptions of Corollary \ref{c:2pe} we have that 
%\begin{equation}\label{e:Fmom}
%\sup_{\lambda\in [a,b]}\Et_\lambda[\tau_1]<\infty.
%\end{equation}
%To avoid repetition and for the purpose of brevity, we choose not to include the proof.

\if0
\begin{comAdam}
I might include a short note on the proof or include it in the appendix if we choose to have one.
\end{comAdam}
\fi

\section{Expressions of derivatives of the speed}\label{s:exp}
%In this section we prove differentiability of the speed assuming Proposition \ref{p:UniMom}. We also write the derivative in terms of the covariance of a $2$-dimensional Gaussian random variable. 
In this section we prove Theorem \ref{dif} assuming Proposition \ref{p:UniMom}.
The following result gives the finite approximation of the derivative.
%\textcolor{red}{\bf Added explanation here:}
 Notice that we do not use expectations $E_{\lambda}[d(Z_n)]$ and $E_{\lambda+h}[d(Z_n)]$, but $E_{\lambda}^{\tt NB}[d(Z_n)]$ and $E_{\lambda+h}^{\tt NB}[d(Z_n)]$, to approximate the derivative, which is
 a non-negligible difference from the approach of \cite{BGN}.
 This is necessary because of the lack of good moment estimates of the first regeneration time $\tau_1$, which
 arises from a special role played by the root. See Remark \ref{rem:tau1} for details.
 
%\textcolor{blue}{\bf Explain why we needed to use $E_{\lambda}^{\tt NB}$!.}
\begin{prp}\label{appro} Suppose $\lambda\in(\lambda_c^{1/2},\mu)$ and that there exists $\beta>1$ such that $\sum_{k=1}^{\infty}p_k\beta^k<\infty$. Let $h$ tend to $0$ and $n$ tend to $\infty$ in such a way that $h^2n$ tends to $1$ (i.e.\ $hn\sim n^{1/2}$). Then
\begin{align*}
\frac{\upsilon_{\lambda+h}-\upsilon_\lambda}{h}-\frac{E_{\lambda+h}^{\tt NB}[d(Z_n)]-E_{\lambda}^{\tt NB}[d(Z_n)]}{hn}
\end{align*}
 tends to $0$.
\end{prp}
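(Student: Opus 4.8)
The plan is to reduce the statement to a single \emph{uniform} estimate. Fix a compact interval $[a,b]\subset(\lambda_c^{1/2},\mu)$ with $\lambda$ in its interior, so that $\lambda+h\in[a,b]$ for all small $h$. I claim that
\[
\sup_{\lambda'\in[a,b]}\Bigl|\upsilon_{\lambda'}-\frac{E_{\lambda'}^{\tt NB}[d(Z_n)]}{n}\Bigr|\le\frac{C}{n}
\]
for a constant $C=C([a,b])$. Granting this, write $R_n(\lambda):=\upsilon_\lambda-E_{\lambda}^{\tt NB}[d(Z_n)]/n$; the quantity in the statement is exactly $\bigl(R_n(\lambda+h)-R_n(\lambda)\bigr)/h$, and since $h^2n\to1$ forces $n|h|=(nh^2)^{1/2}n^{1/2}\to\infty$, it is bounded in absolute value by $2C/(n|h|)\to0$. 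To set up the claim, put $m_\Delta(\lambda):=E_{\lambda}^{\tt NB}[\tau_1]$ and $m_D(\lambda):=E_{\lambda}^{\tt NB}[d(Z_{\tau_1})]$. Recall from \cite{LPP3} that under $P_{\lambda}^{\tt NB}$ the regeneration increments $\bigl(\tau_{i+1}-\tau_i,\,d(Z_{\tau_{i+1}})-d(Z_{\tau_i})\bigr)_{i\ge0}$ form an i.i.d.\ sequence; a standard renewal-reward argument along the regeneration times (using that the walk never dips below its value at a regeneration time and never reaches its value at the next one) gives $\upsilon_\lambda=m_D(\lambda)/m_\Delta(\lambda)$, with $0<m_D(\lambda)\le m_\Delta(\lambda)\le\sup_{\lambda'\in[a,b]}E_{\lambda'}^{\tt NB}[\tau_1]<\infty$, the last inequality by Corollary \ref{c:2pe}.

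For the uniform estimate I would use the renewal decomposition. Let $k_n:=\max\{k\ge0:\tau_k\le n\}$, so $\tau_{k_n}\le n<\tau_{k_n+1}$, and split
\[
E_{\lambda}^{\tt NB}[d(Z_n)]=E_{\lambda}^{\tt NB}\bigl[d(Z_{\tau_{k_n}})\bigr]+E_{\lambda}^{\tt NB}\bigl[d(Z_n)-d(Z_{\tau_{k_n}})\bigr].
\]
For the first term, $\{k_n\ge i\}=\{\tau_i\le n\}$ depends only on the first $i$ increments, $k_n\le n$, and the increments are i.i.d., so Wald's identity yields $E_{\lambda}^{\tt NB}[d(Z_{\tau_{k_n}})]=E_{\lambda}^{\tt NB}[k_n]\,m_D(\lambda)$; the quantitative elementary renewal theorem (Lorden-type bounds) gives $\bigl|E_{\lambda}^{\tt NB}[k_n]-n/m_\Delta(\lambda)\bigr|\le C_1$ with $C_1$ a function of the first two moments of $\tau_1$ only, hence uniform over $[a,b]$ by Corollary \ref{c:2pe}, and therefore $\bigl|E_{\lambda}^{\tt NB}[d(Z_{\tau_{k_n}})]-n\upsilon_\lambda\bigr|\le C_2$ uniformly. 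For the second term, the definition of regeneration forces $0\le d(Z_n)-d(Z_{\tau_{k_n}})\le d(Z_{\tau_{k_n+1}})-d(Z_{\tau_{k_n}})\le\tau_{k_n+1}-\tau_{k_n}$, the length of the regeneration block straddling time $n$; a routine inspection-paradox computation bounds its expectation by $\bigl(\sup_j u_\lambda(j)\bigr)\,E_{\lambda}^{\tt NB}[\tau_1^2]$, where $u_\lambda$ is the renewal mass function of $(\tau_i)$ under $P_{\lambda}^{\tt NB}$; the moment is uniformly bounded by Corollary \ref{c:2pe}, and $\sup_j u_\lambda(j)$ is uniformly bounded because $\inf_{\lambda\in[a,b]}P_{\lambda}^{\tt NB}(\tau_1=1)>0$, which also provides aperiodicity. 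Adding the two bounds gives $\bigl|E_{\lambda}^{\tt NB}[d(Z_n)]-n\upsilon_\lambda\bigr|\le C$ uniformly on $[a,b]$, which is the claim.

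The main obstacle is exactly the uniformity in $\lambda$ of these renewal-theoretic error terms: both the $O(1)$ remainder in the elementary renewal theorem and the expected length of the straddling block must be dominated by quantities that stay uniformly bounded on compact subsets of $(\lambda_c^{1/2},\mu)$, and this is precisely what Proposition \ref{p:UniMom} and Corollary \ref{c:2pe} are for — it is also why the range $(\lambda_c^{1/2},\mu)$, in which the uniform $2+\varepsilon$ moment bound on regeneration increments holds, is the natural one here. The auxiliary bound $\inf_{\lambda\in[a,b]}P_{\lambda}^{\tt NB}(\tau_1=1)>0$ (needed to control $u_\lambda$) can be extracted from the uniform escape-probability estimates behind Proposition \ref{p:esc-conti}. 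Everything else — the speed formula $\upsilon_\lambda=m_D(\lambda)/m_\Delta(\lambda)$, the application of Wald's identity, and the deterministic bound on $d(Z_n)-d(Z_{\tau_{k_n}})$ — is routine.
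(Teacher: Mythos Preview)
Your approach is sound and in fact yields a sharper estimate than the paper's: you get $|E_{\lambda'}^{\tt NB}[d(Z_n)]-n\upsilon_{\lambda'}|=O(1)$ uniformly on $[a,b]$ via Lorden's inequality and an inspection-paradox bound on the straddling block, whereas the paper only shows this quantity is $O(n^\kappa)$ for some $\kappa<1/2$, by bounding it above by $2\,E_{\lambda'}^{\tt NB}\bigl[\max_{0\le i\le n}(\tau_{i+1}-\tau_i)\bigr]$ (using $\eta_n:=\inf\{k:\tau_k\ge n\}$ and Wald) and then estimating the expected maximum directly from the $(2+\varepsilon)$-moment bound of Corollary~\ref{c:2pe}. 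Both estimates are ample once divided by $hn\sim n^{1/2}$. Your route is cleaner renewal theory and gives a stronger conclusion; the paper's is more self-contained since it avoids invoking Lorden-type results.

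One technical slip to flag: $k_n$ is \emph{not} a stopping time for the filtration generated by the i.i.d.\ pairs $\bigl(X_i,D_i\bigr):=\bigl(\tau_i-\tau_{i-1},\,d(Z_{\tau_i})-d(Z_{\tau_{i-1}})\bigr)$. You checked that $\{k_n\ge i\}=\{\tau_i\le n\}\in\mathcal F_i$, but Wald requires $\{k_n\ge i\}\in\mathcal F_{i-1}$; here the event depends on $X_i$, which is correlated with $D_i$, so the identity $E[d(Z_{\tau_{k_n}})]=E[k_n]\,m_D$ fails in general. The fix is immediate: $k_n+1$ \emph{is} a stopping time (since $\{k_n+1\le i\}=\{\tau_i>n\}\in\mathcal F_i$), so Wald gives $E[d(Z_{\tau_{k_n+1}})]=E[k_n+1]\,m_D$, and the correction $E[d(Z_{\tau_{k_n+1}})-d(Z_{\tau_{k_n}})]$ is dominated by the expected length of the straddling block, which you already bound. (This is essentially the paper's $\eta_n$.) Finally, you do not actually need $\inf_{\lambda\in[a,b]}P_\lambda^{\tt NB}(\tau_1=1)>0$: since the $\tau_k$ are strictly increasing integers, $u_\lambda(j)=P_\lambda^{\tt NB}(\exists k:\tau_k=j)\le 1$ trivially, which already gives the uniform bound on the renewal mass function.
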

\begin{proof}
 Define $\eta_n:=\inf\{k:\tau_k\geq n\}$ for $n\in\mathbb{N}$, then $\eta_n$ is a stopping time with respect to the filtration
 generated by random variables $\tau_1$, and $\{\tau_{i+1}-\tau_i\}_{i\geq1}$.
 By the definition of $\eta_n$, we have
 \begin{align}\label{e:Sdw}
 n\leq\tau_{\eta_n}\leq n+\max_{0\leq i\leq n}(\tau_{i+1}-\tau_i),\ P_{\lambda}^{\tt NB}\mathchar`-a.s.
 \end{align}
Combining this with Wald's identity we then have
 \begin{align}\label{a}
 n\leq E_{\lambda}^{\tt NB}[\tau_{\eta_n}]=E_{\lambda}^{\tt NB}[\eta_n]E_{\lambda}^{\tt NB}[\tau_1]\leq n
 +E_{\lambda}^{\tt NB}\left[\max_{0\leq i\leq n}(\tau_{i+1}-\tau_i)\right].
 \end{align} 
 
Using that $0\leq |d(Z_n)-d(Z_{\tau_{\eta_n}})|\leq \tau_{\eta_n}-n$, \eqref{e:Sdw} then implies that
 \begin{align*}
 |E_{\lambda}^{\tt NB}[d(Z_n)]-E_{\lambda}^{\tt NB}[d(Z_{\tau_{\eta_n}})]|
 \leq E_{\lambda}^{\tt NB}\left[\max_{0\leq i\leq n}(\tau_{i+1}-\tau_i)\right].
 \end{align*}
 Wald's identity then gives 
\begin{align*}
 E_{\lambda}^{\tt NB}[d(Z_{\tau_{\eta_n}})]=E_{\lambda}^{\tt NB}[\eta_n]E_{\lambda}^{\tt NB}[d(Z_{\tau_1})].
\end{align*}
 Hence, we get
 \begin{align}\label{aa}
 \Bigl|E^{\tt NB}_{\lambda}[d(Z_n)]-n\upsilon_\lambda \Bigr|
 &\leq \left|E^{\tt NB}_{\lambda}[d(Z_n)]-E^{\tt NB}_{\lambda}[d(Z_{\tau_{\eta_n}})] \right|+
 \left|E^{\tt NB}_{\lambda}[d(Z_{\tau_{\eta_n}})]-n\upsilon_\lambda \right| \nonumber\\
& \leq E_{\lambda}^{\tt NB}\left[\max_{0\leq i\leq n}(\tau_{i+1}-\tau_i)\right]+\left|E_{\lambda}^{\tt NB}[\eta_n]
  E_{\lambda}^{\tt NB}[d(Z_{\tau_1})]-n\upsilon_\lambda \right|
 \end{align}
 
 By (6.4) in \cite{LPP3} we have that
 \begin{align}\label{speed}
 \upsilon_\lambda=\frac{E_{\lambda}[d(Z_{\tau_2})-d(Z_{\tau_1})]}{E_{\lambda}[\tau_2-\tau_1]}
=\dfrac{E_{\lambda}^{\tt NB}[d(Z_{\tau_1})]}{E_{\lambda}^{\tt NB}[\tau_1]},
 \end{align}
therefore, using \eqref{a} and that $\upsilon_\lambda\leq 1$, we have
 \begin{align}\label{aaa}
 \left|E_{\lambda}^{\tt NB}[\eta_n]
  E_{\lambda}^{\tt NB}[d(Z_{\tau_1})]-n\upsilon_\lambda \right|\leq\left|E_{\lambda}^{\tt NB}[\eta_n]E_{\lambda}^{\tt NB}[\tau_1]-n\right|\leq E_{\lambda}^{\tt NB}\left[\max_{0\leq i\leq n}(\tau_{i+1}-\tau_i)\right].
 \end{align}
 By combining \eqref{aa} and \eqref{aaa},
 we get
\begin{align*}
 \left|E_{\lambda}^{\tt NB}[d(Z_n)]-n\upsilon_\lambda \right|\leq 2E_{\lambda}^{\tt NB}\left[\max_{0\leq i\leq n}(\tau_{i+1}-\tau_i)\right]
\end{align*}
%\textcolor{red}{By \eqref{e:Fmom}} there exist constants $C_{\lambda}>0$ and $0<t_{\lambda}<\min\{\mu-\lambda,\lambda-\lambda_c\}$ such that 
%\begin{align*}%\label{tau1}E_{\lambda'}[\tau_1]+E_{\lambda'}[d(Z_{\tau_1})]\leq C_{\lambda}, \end{align*} for any $\lambda'\in(\lambda-t_{\lambda},\lambda+t_{\lambda})$.
 In order to complete the proof of Proposition \ref{appro},
 it suffices to show that there exist constants $0<t_{\lambda}<\min\{\mu-\lambda,\lambda-\lambda_c\}$, $0<\kappa<1/2$ and $c_{\lambda}>0$ such that 
 \begin{align}\label{max}
 E_{\lambda'}^{\tt NB}\left[\max_{0\leq i\leq n}(\tau_{i+1}-\tau_i)\right]\leq c_{\lambda} n^{\kappa}
 \end{align}
 for any $\lambda'\in(\lambda-t_{\lambda},\lambda+t_{\lambda})$. The estimate \eqref{max} can be proved as follows:
 for $\kappa>0$, we have 
 \begin{align*}
 &E_{\lambda'}^{\tt NB}\left[\max_{0\leq i\leq n}(\tau_{i+1}-\tau_i)\right]\leq
  n^{\kappa}+\sum_{k\geq[n^{\kappa}]}P_{\lambda'}^{\tt NB}\left(\max_{0\leq i\leq n}(\tau_{i+1}-\tau_i)\geq k\right).
 \end{align*}
 By Corollary \ref{c:2pe}, there exists a constant $c_{\lambda}>0$
 such that for $\lambda'\in(\lambda-t_{\lambda},\lambda+t_{\lambda})$
 and sufficiently large $k$, we have 
 \begin{align*}
 P_{\lambda'}^{\tt NB}\left(\max_{0\leq i\leq n}(\tau_{i+1}-\tau_i)\geq k\right)&=
 1-\{1-P_{\lambda'}\left(\tau_{2}-\tau_1\geq k\right)\}^n\\
 &\leq 1-(1-c_{\lambda}k^{-(2+\varepsilon)})^n\leq 2c_{\lambda}nk^{-(2+\varepsilon)}.
 \end{align*}
 Thus, for sufficiently large $n$, we get
 \begin{align*}
 E_{\lambda'}^{\tt NB}\left[\max_{0\leq i\leq n}(\tau_{i+1}-\tau_i)\right]\leq
 n^{\kappa}+4c_{\lambda}n^{-\kappa(1+\varepsilon)+1}.
 \end{align*}
 Since $\max\{\kappa,-\kappa(1+\varepsilon)+1\}\leq\frac{1}{2+\varepsilon}$,
 we obtain the estimate \eqref{max}. 
 Therefore, we have shown that
 \begin{align*}
 \frac{E_{\lambda+h}^{\tt NB}[d(Z_n)-n\upsilon_{\lambda+h}]-E_{\lambda}^{\tt NB}[d(Z_n)-n\upsilon_\lambda]}{hn}
 \end{align*}
 tends to $0$ when $h$ tends to $0$ and $n$ tends to $\infty$
 in such a way that $h^2n$ tends to $1$. 
 \end{proof}
 
By Proposition \ref{appro}, in order to show the differentiability of the function $\lambda\mapsto \upsilon_\lambda$, it suffices
 to prove the existence of the limit 
 \begin{align*}
 \lim_{h,n}\frac{E_{\lambda+h}^{\tt NB}[d(Z_n)]-E_{\lambda}^{\tt NB}[d(Z_n)]}{hn},
 \end{align*}
 where $h$ tends to $0$ and $n$ tends to $\infty$ in such a way that $h^2n$ tends to $1$.
We will need the following estimates.
 \begin{lem}\label{ui}
 Suppose that $\lambda\in(\lambda_c^{1/2},\mu)$
 and that there exists $\beta>1$
 such that $\sum_{k=1}^{\infty}p_k\beta^k<\infty$.
 Then, we have 
 \begin{align}\label{est:ui1}
 \sup_n\frac{1}{n}E_{\lambda}^{\tt NB}[(d(Z_n)-n\upsilon_\lambda)^2]<\infty
 \end{align}
and
\begin{align}\label{est:ui2}
 \sup_n\frac{1}{n}\Eb\left[E^{\mathbf{T}^*}_{\lambda}\left[(d(Z_n)-n\upsilon_\lambda)^{2}\ind_{\{\sigma_{e^*(\mathbf{T})}>n\}}\right]\right]<\infty.
\end{align}
 \end{lem}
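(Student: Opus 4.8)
The plan is to run the whole argument under $P^{\tt NB}_\lambda$, where — unlike under $P_\lambda$ — every regeneration block is \IID with a uniformly controlled second moment. Fix $\lambda\in(\lambda_c^{1/2},\mu)$, choose $[a,b]\subset(\lambda_c^{1/2},\mu)$ with $\lambda\in(a,b)$, and let $\varepsilon>0$ be as in Corollary \ref{c:2pe}, so that $E^{\tt NB}_\lambda[\tau_1^{2+\varepsilon}]<\infty$. Set $T_i:=\tau_i-\tau_{i-1}$ and $D_i:=d(Z_{\tau_i})-d(Z_{\tau_{i-1}})$ for $i\geq1$. By the renewal structure recalled in Section \ref{s:ren}, the pairs $\{(T_i,D_i)\}_{i\geq1}$ are \IID under $P^{\tt NB}_\lambda$; since $d$ changes by $\pm1$ at each step, $0\leq D_i\leq T_i$, so $E^{\tt NB}_\lambda[D_1^{2+\varepsilon}]<\infty$ too, and in particular $T_1,D_1\in L^2(P^{\tt NB}_\lambda)$. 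By \eqref{speed}, $\upsilon_\lambda=E^{\tt NB}_\lambda[D_1]/E^{\tt NB}_\lambda[T_1]$, so $W_i:=D_i-\upsilon_\lambda T_i$ are \IID, centred and square-integrable under $P^{\tt NB}_\lambda$. Finally, $\eta_n=\inf\{k:\tau_k\geq n\}$ satisfies $1\leq\eta_n\leq n$ and is a stopping time for $\Gc_k:=\sigma\bigl((T_i,D_i):i\leq k\bigr)$.

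For \eqref{est:ui1} I would write, with $M_k:=\sum_{i=1}^kW_i$,
\begin{align*}
d(Z_n)-n\upsilon_\lambda=\bigl(d(Z_n)-d(Z_{\tau_{\eta_n}})\bigr)+\upsilon_\lambda\bigl(\tau_{\eta_n}-n\bigr)+M_{\eta_n},
\end{align*}
and bound the three terms separately. Since $\tau_{\eta_n-1}<n\leq\tau_{\eta_n}$ and $|d(Z_n)-d(Z_{\tau_{\eta_n}})|\leq\tau_{\eta_n}-n$, the first two terms are each at most $T_{\eta_n}\leq\max_{1\leq i\leq n}T_i$ in absolute value (using $\upsilon_\lambda\leq1$), so their squared contributions have $P^{\tt NB}_\lambda$-expectation $\leq E^{\tt NB}_\lambda\bigl[\sum_{i=1}^nT_i^2\bigr]=nE^{\tt NB}_\lambda[T_1^2]=O(n)$. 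For the martingale term, $(M_k,\Gc_k)$ is an $L^2$ martingale and $\eta_n$ a bounded $(\Gc_k)$-stopping time, so Wald's second identity (optional stopping applied to $M_k^2-\langle M\rangle_k$, with $\langle M\rangle_k=kE^{\tt NB}_\lambda[W_1^2]$) gives $E^{\tt NB}_\lambda[M_{\eta_n}^2]=E^{\tt NB}_\lambda[\eta_n]E^{\tt NB}_\lambda[W_1^2]\leq nE^{\tt NB}_\lambda[W_1^2]=O(n)$. Combining these via $(x+y+z)^2\leq3(x^2+y^2+z^2)$ yields $E^{\tt NB}_\lambda[(d(Z_n)-n\upsilon_\lambda)^2]=O(n)$, which is \eqref{est:ui1}.

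For \eqref{est:ui2} I would split $\ind_{\{\sigma_{e^*}>n\}}=\ind_{\{\sigma_{e^*}=\infty\}}+\ind_{\{n<\sigma_{e^*}<\infty\}}$, where $e^*=e^*(\mathbf{T})$. On $\{\sigma_{e^*}=\infty\}$ the walk on $\mathbf{T}^*$ is, up to the positive normalising constant $C_\lambda:=\Eb[P^{\mathbf{T}^*}_\lambda(\sigma_{e^*}=\infty)]$, the walk under $P^{\tt NB}_\lambda$; hence this contribution equals $C_\lambda\,E^{\tt NB}_\lambda[(d(Z_n)-n\upsilon_\lambda)^2]=O(n)$ by \eqref{est:ui1}. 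On $\{n<\sigma_{e^*}<\infty\}$ the walk has made $n$ steps, so $(d(Z_n)-n\upsilon_\lambda)^2\leq(2n+1)^2$ pointwise (again using $\upsilon_\lambda\leq1$), while Markov's inequality together with Proposition \ref{est:sigma} gives $\Eb\bigl[P^{\mathbf{T}^*}_\lambda(n<\sigma_{e^*}<\infty)\bigr]\leq n^{-1}\Eb\bigl[E^{\mathbf{T}^*}_\lambda[\sigma_{e^*}\ind_{\{\sigma_{e^*}<\infty\}}]\bigr]=O(n^{-1})$; so this contribution is $O(n^2)\cdot O(n^{-1})=O(n)$. Adding the two pieces and dividing by $n$ gives \eqref{est:ui2}.

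The main obstacle is the control of $M_{\eta_n}$: the crude bound $M_{\eta_n}^2\leq\eta_n\sum_{i\leq n}W_i^2$ only yields $O(n^2)$, which is useless here, so it is essential to exploit the martingale structure to kill the cross terms, and this is precisely where the square-integrability of the blocks — hence the hypothesis $\lambda>\lambda_c^{1/2}$, via Corollary \ref{c:2pe} — enters. The only other subtlety is that the argument must be carried out under $P^{\tt NB}_\lambda$ rather than $P_\lambda$, since $\tau_1$ fails to have the required moments under $P_\lambda$ (Remark \ref{rem:tau1}).
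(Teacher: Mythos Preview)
Your proof is correct and follows the same overall strategy as the paper (regeneration decomposition, Wald's second identity for the martingale piece, crude overshoot bound via the maximal increment, and the split of $\ind_{\{\sigma_{e^*}>n\}}$ using Proposition~\ref{est:sigma}). The one genuine difference is your choice of centred increments: you use $W_i=D_i-\upsilon_\lambda T_i$, which is centred by the speed formula \eqref{speed}, whereas the paper centres $D_i$ by its mean $E^{\tt NB}_\lambda[d(Z_{\tau_1})]$ and is then left with a residual term $\eta_n\,E^{\tt NB}_\lambda[d(Z_{\tau_1})]-n\upsilon_\lambda$ that requires a separate renewal-theoretic estimate $E^{\tt NB}_\lambda[\eta_n^2]=E^{\tt NB}_\lambda[\eta_n]^2+O(n)$ (quoted from Cox). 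Your decomposition absorbs this term into the martingale from the start and thereby avoids that external input; the price is nil, since $W_i\in L^2$ follows immediately from $T_i,D_i\in L^2$. For \eqref{est:ui2} the two arguments are essentially identical.
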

 \begin{proof}
We first prove \eqref{est:ui1}.
Using \eqref{e:Sdw} and the arguments of Proposition \ref{appro} we have that
 \begin{align*}
 \left|d(Z_n)-n\upsilon_\lambda\right|\leq \left|d(Z_{\tau_{\eta_n}})-n\upsilon_\lambda\right|+\max_{0\leq i\leq n}(\tau_{i+1}-\tau_i)\ \ P^{\tt NB}_{\lambda}\mathchar`-a.s.,
 \end{align*}
 and, $P^{\tt NB}_{\lambda}$-a.s.,
\begin{align*}
d(Z_{\tau_{\eta_n}})-n\upsilon_\lambda
=\sum_{i=0}^{\eta_n-1}\left(d(Z_{\tau_{i+1}})-d(Z_{\tau_i})-E_{\lambda}^{\tt NB}[d(Z_{\tau_1})]\right)+\bigl(\eta_n\cdot E_{\lambda}^{\tt NB}[d(Z_{\tau_1})]-n\upsilon_\lambda\bigr) 
\end{align*}
 Hence, we get
\begin{align*}
 & E^{\tt NB}_{\lambda}[(d(Z_n)-n\upsilon_\lambda)^2]\\
&\leq 4\Biggl\{E_{\lambda}^{\tt NB}\left[\left(\max_{0\leq i\leq n}(\tau_{i+1}-\tau_i)\right)^2\right]
+E_{\lambda}^{\tt NB}\left[\left(\eta_n\cdot E^{\tt NB}_{\lambda}[d(Z_{\tau_1})]-n\upsilon_\lambda\right)^2\right]\\
&\ \ \ \ \ \ \ \ \ \ \ \ \ \ \ \ \ \ \ \ \ \ \ \ \ \ \ \ \ \ \ \ +E_{\lambda}^{\tt NB}\left[\left\{\sum_{i=0}^{\eta_n-1}\left(d(Z_{\tau_{i+1}})-d(Z_{\tau_i})-E_{\lambda}^{\tt NB}[d(Z_{\tau_1})]\right)\right\}^2\right]\Biggr\}.
\end{align*}
Recall that $d(Z_{\tau_{i+1}})-d(Z_{\tau_i})$ are \IID then Wald's second identity implies
\begin{align*}
& E_{\lambda}^{\tt NB}\left[\left(\sum_{i=0}^{\eta_n-1}\left(d(Z_{\tau_{i+1}})-d(Z_{\tau_i})-E_{\lambda}^{\tt NB}[d(Z_{\tau_1})]\right)\right)^2\right]\\
&=E_{\lambda}^{\tt NB}\left[\left(d(Z_{\tau_{2}})-d(Z_{\tau_1})-E_{\lambda}^{\tt NB}[d(Z_{\tau_1})]\right)^2\right]E_{\lambda}[\eta_n]
\end{align*}
 thus by Corollary \ref{c:2pe} and the estimate \eqref{a} along with \eqref{max}, we have
 \begin{align*}
 \sup_{n}\frac{1}{n}E_{\lambda}^{\tt NB}\left[\left(\sum_{i=0}^{\eta_n-1}\left(d(Z_{\tau_{i+1}})-d(Z_{\tau_i})-E_{\lambda}^{\tt NB}[d(Z_{\tau_1})]\right)\right)^2\right]<\infty.
 \end{align*}
% It is obvious that $n^{-1}E_{\lambda}[d(Z_{\tau_1})^2]$ is bounded in $n$,and i
It is not difficult to see that Corollary \ref{c:2pe}
 implies $n^{-1}E_{\lambda}^{\tt NB}[(\max_{0\leq i\leq n}(\tau_{i+1}-\tau_i))^2]$ is also bounded in $n$. 
 Hence, we get the conclusion if we show 
 \begin{align*}
 \sup_{n\geq1}\frac{1}{n}E_{\lambda}^{\tt NB}\left[\Bigl(\eta_n\cdot E_{\lambda}^{\tt NB}[d(Z_{\tau_1})]-n\upsilon_\lambda\Bigr)^2\right]<\infty.
 \end{align*}
  It is shown in Chapter 4 of \cite{C} that 
 \begin{align*}
 E_{\lambda}^{\tt NB}[\eta_n^2]=E_{\lambda}^{\tt NB}[\eta_n]^2+O(n)=\frac{n^2}{E_{\lambda}^{\tt NB}[\tau_1]^2}+O(n).
 \end{align*}
 By using the formula \eqref{speed} %$\upsilon_\lambda=E_{\lambda}[d(Z_{\tau_2})-d(Z_{\tau_1})]/E_{\lambda}[\tau_2-\tau_1]$
 and the estimate \eqref{a}, we get
\begin{align*}
&E_{\lambda}^{\tt NB}\left[\Bigl(\eta_n\cdot E_{\lambda}^{\tt NB}[d(Z_{\tau_1})]-n\upsilon_\lambda\Bigr)^2\right] \\
&=E_{\lambda}^{\tt NB}[\eta_n]^2E_{\lambda}^{\tt NB}[d(Z_{\tau_1})]^2-\frac{E_{\lambda}^{\tt NB}[d(Z_{\tau_1})]^2}
 {E_{\lambda}^{\tt NB}[\tau_1]^2}\cdot n^2+O(n) 
\end{align*}
 which is at most order $n$ therefore this implies \eqref{est:ui1}.
 
 We next prove \eqref{est:ui2}. In order to deduce \eqref{est:ui2} from \eqref{est:ui1},
 it suffice to show that
 \begin{align*}
\sup_{n}\frac{1}{n}\Eb\left[E^{\mathbf{T}^*}_{\lambda}\left[(d(Z_n)-n\upsilon_\lambda)^{2}\ind_{\{n<\sigma_{e^*(\mathbf{T})}<\infty\}}\right]\right]<\infty.
 \end{align*}
 This immediately follows from Proposition \ref{est:sigma} and an obvious bound
  $|d(Z_n)-n\upsilon_\lambda|\leq (1+\upsilon_{\lambda})n.$
   \end{proof}

 Lemma \ref{ui} implies uniform integrability of the sequence $\{(d(Z_n)-n\upsilon_\lambda)/\sqrt{n}\}_{n\geq1}$
  under the conditioned annealed measure $P_{\lambda}^{\tt NB}$
  when $\lambda\in(\lambda_c^{1/2},\mu)$ and the offspring distribution $\{p_k\}_{k\geq0}$ has finite exponential moment.
 On the other hand, by Corollary \ref{c:2pe} and a standard argument in the renewal theory, we get that 
  the sequence $\{(d(Z_n)-n\upsilon_\lambda)/\sqrt{n}\}_{n\geq1}$ satisfies the annealed CLT under the same assumptions.
  Hence, we have
 \begin{align*}
 \lim_{n\to\infty}\frac{E_{\lambda}^{\tt NB}[d(Z_n)-n\upsilon_\lambda]}{\sqrt{n}}=0,
 \end{align*}
 for any $\lambda\in(\lambda_c^{1/2},\mu)$. Thus, in order to prove the differentiability of the speed,
 we only need to show the existence of the limit
 \begin{align}\label{aim}
 \frac{1}{hn}E_{\lambda+h}^{\tt NB}[d(Z_n)-n\upsilon_\lambda],
 \end{align}
 for any sequence $h$ and $n$ such that $h\rightarrow0$ and $n\rightarrow\infty$
 in such a way that $h^2n\rightarrow1$. To do so, we wish to relate the measures $E_{\lambda}^{\tt NB}$ and
 $E_{\lambda+h}^{\tt NB}$ by using the Girsanov formula \eqref{Gir}.
However, the formula \eqref{Gir} does not directly apply to $E_{\lambda+h}^{\tt NB}[d(Z_n)-n\upsilon_\lambda]$
because of the presence of the non-backtracking condition $\{\sigma_{e^*(\mathbf{T})}=\infty\}$.
In order to overcome this problem, we need the following lemma.
\begin{lem}\label{lem:replace}
Suppose that $\lambda\in(\lambda_c^{1/2},\mu)$
 and that there exists $\beta>1$
 such that $\sum_{k=1}^{\infty}p_k\beta^k<\infty$.
 Then, we have that
\begin{align}\label{eq:replace}
 &\frac{1}{\sqrt{n}}\Biggl(
\Eb\left[E^{\mathbf{T}^*}_{\lambda+h}\left[\left(d(Z_n)-n\upsilon_\lambda\right)\ind_{\{\sigma_{e^*(\mathbf{T})}=\infty\}}\right]\right]\notag\\
&\ \ \ \ \ \ \ \ \ \ \   -\Eb\left[E^{\mathbf{T}^*}_{\lambda}\left[\left(d(Z_n)-n\upsilon_\lambda\right)\prod_{i=1}^{n}\dfrac{A_{\lambda+h}(Z_{i-1},Z_i)}{A_{\lambda}(Z_{i-1},Z_i)}\ind_{\{\sigma_{e^*(\mathbf{T})}=\infty\}}\right]\right]\Biggr)
%\right)
\end{align}
 converges to $0$ as $h$ tends to $0$ and $n$ tends to $\infty$ in such a way that $h^2n\to1$.
\end{lem}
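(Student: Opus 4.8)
The plan is to use that \eqref{Gir} is an \emph{exact} identity whenever $F$ is $\Fc_n(\mathbf{T}^*)$-measurable, and to handle the non-$\Fc_n$-measurable condition $\{\sigma_{e^*(\mathbf{T})}=\infty\}$ by the Markov property at time $n$. Write $W_n:=\prod_{i=1}^{n}A_{\lambda+h}(Z_{i-1},Z_i)/A_{\lambda}(Z_{i-1},Z_i)$ for the Girsanov weight, $g(x):=P^{\mathbf{T}^*}_{\lambda,x}\bigl(\sigma_{e^*(\mathbf{T})}<\infty\bigr)$, abbreviate $\sigma_{e^*}:=\sigma_{e^*(\mathbf{T})}$, and fix $\delta>0$ with $[\lambda-2\delta,\lambda+2\delta]\subset(\lambda_c^{1/2},\mu)$, restricting to $|h|<\delta$. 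Since $\{\sigma_{e^*}>n\}=\{Z_1,\dots,Z_n\neq e^*\}\in\Fc_n(\mathbf{T}^*)$, applying \eqref{Gir} with $S=n$ to $F=(d(Z_n)-n\upsilon_\lambda)\ind_{\{\sigma_{e^*}>n\}}$ and averaging over $\Pb$ gives
\begin{align*}
\Eb\Bigl[E^{\mathbf{T}^*}_{\lambda+h}\bigl[(d(Z_n)-n\upsilon_\lambda)\ind_{\{\sigma_{e^*}>n\}}\bigr]\Bigr]=\Eb\Bigl[E^{\mathbf{T}^*}_{\lambda}\bigl[(d(Z_n)-n\upsilon_\lambda)W_n\ind_{\{\sigma_{e^*}>n\}}\bigr]\Bigr].
\end{align*}
Substituting $\ind_{\{\sigma_{e^*}=\infty\}}=\ind_{\{\sigma_{e^*}>n\}}-\ind_{\{n<\sigma_{e^*}<\infty\}}$ into \eqref{eq:replace}, the two ``$\ind_{\{\sigma_{e^*}>n\}}$'' contributions cancel by the identity above, and \eqref{eq:replace} reduces to $n^{-1/2}(C_2-C_1)$, where $C_1:=\Eb[E^{\mathbf{T}^*}_{\lambda+h}[(d(Z_n)-n\upsilon_\lambda)\ind_{\{n<\sigma_{e^*}<\infty\}}]]$ and $C_2:=\Eb[E^{\mathbf{T}^*}_{\lambda}[(d(Z_n)-n\upsilon_\lambda)W_n\ind_{\{n<\sigma_{e^*}<\infty\}}]]$. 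It thus suffices to prove $C_1=o(\sqrt n)$ and $C_2=o(\sqrt n)$.

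For $C_1$ I would bound $|d(Z_n)-n\upsilon_\lambda|\le(1+\upsilon_\lambda)n\le2n$ (using $\upsilon_\lambda\le1$) and apply Markov's inequality together with the version of Proposition \ref{est:sigma} uniform over $\lambda'\in[\lambda-\delta,\lambda+\delta]$ (which should follow from its proof in Sections \ref{s:uni}--\ref{s:reg}): $|C_1|\le 2n\,\Eb[P^{\mathbf{T}^*}_{\lambda+h}(n<\sigma_{e^*}<\infty)]\le 2\,\Eb[E^{\mathbf{T}^*}_{\lambda+h}[\sigma_{e^*}\ind_{\{\sigma_{e^*}<\infty\}}]]=O(1)$, so $C_1/\sqrt n\to0$.

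The term $C_2$ is the main point, since $W_n$ is attached to $\{n<\sigma_{e^*}<\infty\}\notin\Fc_n(\mathbf{T}^*)$, so \eqref{Gir} cannot be used to remove it directly. I would first integrate out the future: by the Markov property at time $n$ under $P^{\mathbf{T}^*}_{\lambda}$ one has $E^{\mathbf{T}^*}_{\lambda}[H\ind_{\{n<\sigma_{e^*}<\infty\}}]=E^{\mathbf{T}^*}_{\lambda}[Hg(Z_n)\ind_{\{\sigma_{e^*}>n\}}]$ for every $\Fc_n$-measurable $H$, whence $C_2=\Eb[E^{\mathbf{T}^*}_{\lambda}[(d(Z_n)-n\upsilon_\lambda)g(Z_n)W_n\ind_{\{\sigma_{e^*}>n\}}]]$, in which all factors are $\Fc_n$-measurable. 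By Cauchy--Schwarz (first in $E^{\mathbf{T}^*}_{\lambda}$, then in $\Eb$), using $0\le g\le1$,
\begin{align*}
|C_2|\le\Bigl(\Eb\bigl[E^{\mathbf{T}^*}_{\lambda}[(d(Z_n)-n\upsilon_\lambda)^2W_n^2\ind_{\{\sigma_{e^*}>n\}}]\bigr]\Bigr)^{1/2}\Bigl(\Eb\bigl[E^{\mathbf{T}^*}_{\lambda}[g(Z_n)\ind_{\{\sigma_{e^*}>n\}}]\bigr]\Bigr)^{1/2}.
\end{align*}
The second factor equals $\bigl(\Eb[P^{\mathbf{T}^*}_{\lambda}(n<\sigma_{e^*}<\infty)]\bigr)^{1/2}=O(n^{-1/2})$ by Markov's inequality and Proposition \ref{est:sigma}. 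For the first factor, note that on $\{\sigma_{e^*}>n\}$ every step $Z_{i-1}\to Z_i$ is a move to a child or to a parent other than $e^*$, and an elementary arithmetic--geometric mean computation gives $A_{\lambda+h}(x,y)^2\le\bigl(1+h^2/(\lambda(\lambda+2h))\bigr)A_{\lambda}(x,y)A_{\lambda+2h}(x,y)$ for every such transition; hence $W_n^2\ind_{\{\sigma_{e^*}>n\}}\le e^{nh^2/(\lambda(\lambda+2h))}\prod_{i=1}^n\tfrac{A_{\lambda+2h}(Z_{i-1},Z_i)}{A_{\lambda}(Z_{i-1},Z_i)}\ind_{\{\sigma_{e^*}>n\}}$, with prefactor bounded since $h^2n\to1$. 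Applying \eqref{Gir} with $S=n$ and $h$ replaced by $2h$ bounds the first factor squared by a constant times $\Eb[E^{\mathbf{T}^*}_{\lambda+2h}[(d(Z_n)-n\upsilon_\lambda)^2\ind_{\{\sigma_{e^*}>n\}}]]$; using $(d(Z_n)-n\upsilon_\lambda)^2\le2(d(Z_n)-n\upsilon_{\lambda+2h})^2+2n^2(\upsilon_{\lambda+2h}-\upsilon_\lambda)^2$ and the version of \eqref{est:ui2} uniform over $\lambda'\in[\lambda-2\delta,\lambda+2\delta]$ (applied at $\lambda+2h$), this is at most $O(n)+O(n^2)(\upsilon_{\lambda+2h}-\upsilon_\lambda)^2$. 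Combining,
\begin{align*}
\frac{|C_2|}{\sqrt n}\le O(1)\left(\frac{O(n)+O(n^2)(\upsilon_{\lambda+2h}-\upsilon_\lambda)^2}{n^2}\right)^{1/2}=O(n^{-1/2})+O(1)\,\bigl|\upsilon_{\lambda+2h}-\upsilon_\lambda\bigr|\longrightarrow0
\end{align*}
by continuity of $\lambda\mapsto\upsilon_\lambda$ on $(\lambda_c,\mu)$.

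The step I expect to be the principal obstacle is the treatment of $C_2$, i.e.\ removing the Radon--Nikodym weight $W_n$ even though it is carried by an event outside $\Fc_n(\mathbf{T}^*)$. The resolution is the Markov-at-time-$n$ identity, which trades $\ind_{\{n<\sigma_{e^*}<\infty\}}$ for the bounded $\Fc_n$-measurable factor $g(Z_n)\ind_{\{\sigma_{e^*}>n\}}$, followed by the pointwise comparison $W_n^2\lesssim\prod_{i\le n}A_{\lambda+2h}/A_{\lambda}$, which together make \eqref{Gir} and the second-moment estimate \eqref{est:ui2} applicable; the remaining inputs one must pin down are the uniformity in the bias of Proposition \ref{est:sigma} and of Lemma \ref{ui} (following from their proofs) and the continuity of the speed.
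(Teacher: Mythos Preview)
Your argument is correct in outline but takes a genuinely different route from the paper, and leans on three side inputs the paper manages to avoid.

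\textbf{What the paper does.} Rather than splitting $\ind_{\{\sigma_{e^*}=\infty\}}=\ind_{\{\sigma_{e^*}>n\}}-\ind_{\{n<\sigma_{e^*}<\infty\}}$, the paper applies the Markov property at time $n$ to \emph{both} terms in \eqref{eq:replace}, obtaining a single expression under $E^{\mathbf T^*}_\lambda$ with the common factor $(d(Z_n)-n\upsilon_\lambda)W_n\ind_{\{\sigma_{e^*}>n\}}$ multiplied by the \emph{difference} $P^{\mathbf T^*}_{\lambda+h,Z_n}(\sigma_{e^*}=\infty)-P^{\mathbf T^*}_{\lambda,Z_n}(\sigma_{e^*}=\infty)$. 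A triple H\"older bound then gives $\mathcal E^1_n\cdot\mathcal E^2_{h,n}\cdot\mathcal E^3_{h,n}$, where $\mathcal E^1_n$ is exactly \eqref{est:ui2} at the fixed $\lambda$, $\mathcal E^2_{h,n}$ is a fourth-moment bound on $W_n$ handled by the same ``$W_n^k\lesssim\prod A_{\lambda+kh}/A_\lambda$'' device you use (via the Taylor expansion \eqref{AAA} rather than AM--GM), and the vanishing factor $\mathcal E^3_{h,n}$ comes from the fact that, by monotonicity in $\lambda$, both escape probabilities from $Z_n$ tend to $1$ as $d(Z_n)\to\infty$.

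\textbf{Comparison.} Your cancellation of the $\ind_{\{\sigma_{e^*}>n\}}$ contributions is clean, and your AM--GM proof of $W_n^2\le e^{nh^2/\lambda(\lambda+2h)}\prod A_{\lambda+2h}/A_\lambda$ is a nice pointwise alternative to the paper's expansion. The cost is that your bounds on $C_1$ and $C_2$ live at the perturbed biases $\lambda+h$ and $\lambda+2h$, forcing you to invoke (i) a version of Proposition~\ref{est:sigma} uniform in a neighbourhood of $\lambda$, (ii) a version of \eqref{est:ui2} uniform in a neighbourhood of $\lambda$, and (iii) continuity of $\lambda\mapsto\upsilon_\lambda$. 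Items (i)--(ii) are plausible from the proofs in Sections~\ref{s:uni}--\ref{s:reg} (the key ingredients, Corollary~\ref{c:2pe}, Lemma~\ref{l:subComp} and the argument behind \eqref{est:suff}, are already uniform), but they are not stated and you would need to spell this out. Item (iii) is not circular---continuity is weaker than the differentiability you are ultimately proving---but it is not proved anywhere in the paper and requires a separate argument (e.g.\ via the expression in \cite{A1}, or via \eqref{speed} together with continuity of $E_\lambda^{\tt NB}[\tau_1]$ and $E_\lambda^{\tt NB}[d(Z_{\tau_1})]$, which in turn needs Proposition~\ref{p:esc-conti} and dominated convergence). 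The paper's arrangement sidesteps all three: every moment estimate is applied only at the fixed $\lambda$, and the smallness comes entirely from the escape-probability difference, which is handled by a soft monotonicity-plus-transience argument.
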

Since the proof of Lemma \ref{lem:replace} requires a careful analysis of the Girsanov weight, we will defer it until the end of the next subsection.

\subsection{The discrete Girsanov formula}
 In this subsection, we will analyse the Girsanov weight $\prod_{i=1}^n\frac{A_{\lambda+h}(Z_{i-1},Z_i)}{A_{\lambda}(Z_{i-1},Z_i)}$.%relate the quantities $E_{\lambda}[d(Z_n)-n\upsilon_\lambda]$ and $E_{\lambda+h}[d(Z_n)-n\upsilon_\lambda]$ by using the Girsanov formula \eqref{Gir}.
\ By the Taylor expansion, there exists $s=s(x,y)\in[0,1]$ such that 
 \begin{align*}%\label{e:LogA}
 \log\frac{A_{\lambda+h}(x,y)}{A_{\lambda}(x,y)}
=hB_{\lambda}(x,y)&+\frac{h^2}{2}C_{\lambda}(x,y)+\frac{h^3}{6}D_{\lambda+sh}(x,y),
 \end{align*}
 where
 \begin{align*}
 B_{\lambda}(x,y)=\frac{\d}{\d\lambda}\log A_{\lambda}(x,y)&=\begin{cases}
 0 &{\rm when}\ x=e,\\
  \frac{1}{\lambda}-\frac{1}{\lambda+\nu(x)} &{\rm when}\ y=\pi(x),\\
   -\frac{1}{\lambda+\nu(x)}          &{\rm when}\ x=\pi(y),
 \end{cases}\\
 C_{\lambda}(x,y)=\frac{\d}{\d\lambda}B_{\lambda}(x,y)&=\begin{cases}
 0 &{\rm when}\ x=e,\\
  -\frac{1}{\lambda^2}+\frac{1}{(\lambda+\nu(x))^2} &{\rm when}\ y=\pi(x),\\
   \frac{1}{(\lambda+\nu(x))^2}          &{\rm when}\ x=\pi(y),
 \end{cases}
 \intertext{and}
 D_{\lambda}(x,y)=\frac{\d}{\d\lambda}C_{\lambda}(x,y)&=\begin{cases}
 0 &{\rm when}\ x=e,\\
  \frac{2}{\lambda^3}-\frac{2}{(\lambda+\nu(x))^3} &{\rm when}\ y=\pi(x),\\
   -\frac{2}{(\lambda+\nu(x))^3}          &{\rm when}\ x=\pi(y).
   \end{cases}
 \end{align*}
 By using these expressions, we have
 \begin{align}\label{AAA}
\prod_{i=1}^{n}\frac{A_{\lambda+h}(Z_{i-1},Z_i)}{A_{\lambda}(Z_{i-1},Z_i)}=\exp(hP_n-h^2Q_n+R_{n,h})\ \ P_{\lambda}\mathchar`-a.s.,
%&\textcolor{red}{{\rm Correction:} \frac{1}{hn}E_{\lambda}\left[\left(d(Z_n)-n\upsilon_\lambda\right)\prod_{i=1}^{n}\frac{A_{\lambda+h}(Z_{i-1},Z_i)}{A_{\lambda}(Z_{i-1},Z_i)}\cdot\ind_{\{\sigma_{e^*(\mathbf{T})}=\infty\}}\right]}\nonumber\\
 %&\textcolor{red}{=\frac{1}{hn}E_{\lambda}\left[(d(Z_n)-n\upsilon_\lambda)\exp(hP_n-h^2Q_n+R_{n,h})\cdot\ind_{\{\sigma_{e^*(\mathbf{T})}=\infty\}}\right]}
 \end{align}
 where
 \begin{align*}
 P_n&:=\sum_{j=0}^{n-1}B_{\lambda}(Z_j,Z_{j+1}),\\
 Q_n&:=\sum_{j=0}^{n-1}\frac{1}{2}B^2_{\lambda}(Z_j,Z_{j+1}),\\ 
 R_{n,h}&:=\sum_{j=0}^{n-1}\left\{h^2\left(\frac{1}{2}B_{\lambda}^2(Z_j,Z_{j+1})+\frac{1}{2}C_{\lambda}(Z_j,Z_{j+1})\right)
+\frac{h^3}{6}D_{\lambda+sh}(Z_j,Z_{j+1})\right\}.
 \end{align*}
 Since 
 \begin{align}\label{Ubound}
 |B_{\lambda}(x,y)|\leq\frac{1}{\lambda}+1,\ 
 |C_{\lambda}(x,y)|\leq\frac{1}{\lambda^2}+1,\ |D_{\lambda}(x,y)|\leq\frac{2}{\lambda^3}+2,
 \end{align}
  we get
 \begin{align*}
 1&=\sum_{y}A_{\lambda+h}(x,y)=\sum_{y}A_{\lambda}(x,y)\exp\Bigl(hB_{\lambda}(x,y)+\frac{h^2}{2}C_{\lambda}(x,y)
+\frac{h^3}{6}D_{\lambda+sh}(x,y)\Bigr)\\
&=\sum_{y}A_{\lambda}(x,y)\Bigl(1+hB_{\lambda}(x,y)+\frac{h^2}{2}B^2_{\lambda}(x,y)+\frac{h^2}{2}C_{\lambda}(x,y)+O(h^3)\Bigr).
 \end{align*}
 This implies that for any $x\in\Tr$,
 \begin{align}\label{0}
 \sum_{y}A_{\lambda}(x,y)B_{\lambda}(x,y)&=0,\ \ P_{\lambda}\mathchar`-a.s.,\\
 \sum_{y}A_{\lambda}(x,y)\Bigl(B^2_{\lambda}(x,y)+C_{\lambda}(x,y)\Bigr)&=0,\ \ P_{\lambda}\mathchar`-a.s. \notag
 \end{align}
 By using the Markov property and the equality \eqref{0}, we obtain
 \begin{align*}
 E_{\lambda}^{\mathbf{T}}[B_{\lambda}(Z_j,Z_{j+1})\ |\ Z_j]=\sum_{y}B(Z_j,y)A_{\lambda}(Z_j,y)=0,\ \ P^{\mathbf{T}}_{\lambda}\mathchar`-a.s.
 \end{align*}
 This implies 
 \begin{align}\label{B}
 E^{\mathbf{T}}_{\lambda}[B_{\lambda}(Z_j,Z_{j+1})]=E_{\lambda}[B_{\lambda}(Z_j,Z_{j+1})]=0,\ \ P_{\lambda}\mathchar`-a.s.
 \end{align}
 Similarly,
 we have 
 \begin{align}\label{BC}
 E^{\mathbf{T}}_{\lambda}[B^2_{\lambda}(Z_j,Z_{j+1})+C_{\lambda}(Z_j,Z_{j+1})]=0,\ \ P_{\lambda}\mathchar`-a.s.
 \end{align}
We now let $h$ tend to $0$ and $n$ tend to $\infty$ in such a way that $h^2n$ tends to $1$. We show that the limits of $hP_n$ and $h^2Q_n$ are described by a CLT and a LLN respectively and the limit of $R_{n,h}$ is negligible.

 \begin{enumerate}[align=left, leftmargin=0pt, labelindent=\parindent, listparindent=\parindent, labelwidth=0pt, itemindent=!]
\item[{\bf 1) The CLT for $P_n$:}]   
%\textcolor{red}{\bf I've rewritten here. Please check carefully.} 
By the renewal structure of GW-trees, we know that the collection $\{\sum_{j=\tau_i}^{\tau_{i+1}-1}B_{\lambda}(Z_j,Z_{j+1})\}_{i\geq1}$ are \IID random variables under $P_{\lambda}$, and are distributed as $\sum_{i=0}^{\tau_1-1}B_{\lambda}(Z_j,Z_{j+1})$ under $P_{\lambda}^{\tt NB}$. 
%By Wald's identity, we get
 %\begin{align} 
 %\left|E_{\lambda}\left[\sum_{j=0}^{\tau_{\eta_n}-1}B_{\lambda}(Z_j,Z_{j+1})\right]\right|
 %=&\left|E_{\lambda}\left[\sum_{j=0}^{\tau_1-1}B_{\lambda}(Z_j,Z_{j+1})\right]+E_{\lambda}\left[\sum_{j=\tau_1}^{\tau_{\eta_n}-1}B_{\lambda}(Z_j,Z_{j+1})\right]\right| \label{e:SumB}\\
 %=&\left|E_{\lambda}\left[\sum_{j=0}^{\tau_1-1}B_{\lambda}(Z_j,Z_{j+1})\right]+
 %E_{\lambda}[\eta_n-1]E_{\lambda}\left[\sum_{j=\tau_1}^{\tau_2-1}B_{\lambda}(Z_j,Z_{j+1})\right]\right|. \notag
%\end{align}
Recall that by \eqref{B} we have that 
\begin{align}\label{eq:pn}
E_{\lambda}[P_n]=0.
\end{align} 
On the other hand, noticing that $\tau_1$ is finite a.s. by the renewal structure we have the following LLN:
\begin{align}\label{e:lln}
\lim_{n\to\infty}n^{-1}P_n=\dfrac{E_{\lambda}\left[\sum_{j=\tau_1}^{\tau_2-1}B_{\lambda}(B_j,B_{j+1})\right]}{E_{\lambda}[\tau_2-\tau_1]}=\dfrac{E_{\lambda}^{\tt NB}\left[\sum_{j=0}^{\tau_1-1}B_{\lambda}(B_j,B_{j+1})\right]}{E^{\tt NB}_{\lambda}[\tau_1]}\ \ P_{\lambda}\mathchar`-a.s.
\end{align}
Noticing that $n^{-1}P_n\leq 1+\lambda^{-1}$ by \eqref{Ubound}, the dominated convergence theorem implies that the same convergence as \eqref{e:lln} holds in $L^1(P_{\lambda})$. This fact together with \eqref{eq:pn} implies that
%It then follows from the definition of $P_n$ and 
%\eqref{Ubound} that
 %\begin{align}
 %\left|E_{\lambda}\left[\sum_{j=0}^{\tau_{\eta_n}-1}B_{\lambda}(Z_j,Z_{j+1})\right]\right|
 %&=\left|E_{\lambda}\left[\sum_{j=0}^{\tau_{\eta_n}-1}B_{\lambda}(Z_j,Z_{j+1})\right]-E_{\lambda}\left[P_n\right]\right| \notag \\
 %&=\left|E_{\lambda}\left[\sum_{j=0}^{\tau_{\eta_n}-1}B_{\lambda}(Z_j,Z_{j+1})\right]-E_{\lambda}\left[\sum_{j=0}^{n}B_{\lambda}(Z_j,Z_{j+1})\right]\right| \notag\\
 %&=\left|E_{\lambda}\left[\sum_{j=n+1}^{\tau_{\eta_n}-1}B_{\lambda}(Z_j,Z_{j+1})\right]\right|\notag\\
 %&\leq\left(\frac{1}{\lambda}+1\right)E_{\lambda}[|\tau_{\eta_n}-1-n|] \notag\\
 %&\leq\left(\frac{1}{\lambda}+1\right)E_{\lambda}\left[\max_{1\leq i\leq n}(\tau_{i+1}-\tau_i)\right]. \label{e:BndB}
%\end{align}
%Recalling \eqref{max}, we have that there exists $\kappa\in(0,1/2)$ and a constant $C$ depending on $\lambda$ such that \eqref{e:BndB} is bounded above by $cn^\kappa$. Since it is shown in the estimate \eqref{a} that $E_{\lambda}[\eta_n]$ grows linearly in $n$, comparing \eqref{e:SumB} with \eqref{e:BndB} we obtain that 
\begin{align}\label{e:BTT}
E_{\lambda}\left[\sum_{j=\tau_1}^{\tau_2-1}B_{\lambda}(Z_j,Z_{j+1})\right]=E_{\lambda}^{\tt NB}\left[\sum_{j=0}^{\tau_1-1}B_{\lambda}(Z_j,Z_{j+1})\right]=0.
\end{align}
By \eqref{Ubound} and Corollary \ref{c:2pe} we also have that 
\begin{align*}
E_{\lambda}\left[\left(\sum_{j=\tau_1}^{\tau_2-1}B_{\lambda}(Z_j,Z_{j+1})\right)^2\right]
&\leq E_{\lambda}^{\tt NB}\left[\left(\sum_{j=0}^{\tau_1-1}B_{\lambda}(Z_j,Z_{j+1})\right)^2\right]\\
&\leq \left(1+\frac{1}{\lambda}\right)^2E_{\lambda}^{\tt NB}\left[\tau_1^2\right]<\infty.
\end{align*}
Moreover, we have that
\[n^{-1/2}\left(\sum_{j=0}^{\tau_{\eta_n}-1}B_{\lambda}(Z_j,Z_{j+1})-P_n\right)\leq n^{-1/2}\left(1+\frac{1}{\lambda}\right)\max_{0\leq i\leq n}(\tau_{i+1}-\tau_{i}),\]
which converges to $0$ in probability by \eqref{max}.
It therefore follows that $n^{-1/2}P_n$ converges in distribution to a centred Gaussian.

%\begin{comAdam}
%I'm not sure whether it would also be worth noting here that 
%\[n^{-1/2}\left(\sum_{j=0}^{\tau_{\eta_n}-1}B_{\lambda}(Z_j,Z_{j+1})-P_n\right)\leq n^{-1/2}\left(1+\frac{1}{\lambda}\right)\max_{1\leq i\leq n}(\tau_i-\tau_{i-1})\]
%which converges to $0$ in probability by \eqref{max}.
%\end{comAdam}

 %This equality together with the moment estimate for $\tau_2-\tau_1$ implies the annealed CLT for $P_n$.

\item[{\bf 2) The LLN for $Q_n$:}] 
%\textcolor{red}{\bf I omitted some arguments below because I thought they're tedious.}
Recalling that $\tau_1$ is $P_{\lambda}$-a.s.\ finite and from \eqref{Ubound} that $B_{\lambda}^2(Z_j,Z_{j+1})$ is bounded above, by the law of large numbers we know that, $P_{\lambda}$-a.s.

\if0
 \begin{align*}
 \lim_{k\to\infty}\frac{1}{k}\sum_{j=0}^{\tau_k-1}B_{\lambda}^2(Z_j,Z_{j+1})= E_{\lambda}\left[\sum_{j=\tau_1}^{\tau_2-1}B_{\lambda}^2(Z_j,Z_{j+1})\right],
 \ \ P_{\lambda}\mathchar`-a.s.
 \end{align*}
 Hence, since $\eta_n\rightarrow \infty$ as $\nin$,
\begin{align*}
\lim_{n\to\infty}\frac{1}{\eta_n}\sum_{j=0}^{\tau_{\eta_n}-1}B_{\lambda}^2(Z_j,Z_{j+1})=E_{\lambda}\left[\sum_{j=\tau_1}^{\tau_2-1}B_{\lambda}^2(Z_j,Z_{j+1})\right],
 \ \ P_{\lambda}\mathchar`-a.s.
\end{align*}
Now we have
 \begin{align}
 \left|\frac{1}{n}Q_n-\frac{\eta_n}{n}\cdot\frac{1}{\eta_n}\sum_{j=0}^{\tau_{\eta_n}-1}\frac{1}{2}B_{\lambda}^2(Z_j,Z_{j+1})\right|
 &\leq\frac{1}{2n}\left|\sum_{j=n}^{\tau_{\eta_n}-1}B_{\lambda}^2(Z_j,Z_{j+1})\right| \notag\\
 &\leq (\lambda^{-1}+1)^2\frac{\max_{0\leq i\leq n}(\tau_{i+1}-\tau_i)}{n} \label{e:UpQ}
 \end{align}
 where the estimates \eqref{e:Sdw}, \eqref{Ubound} are used in the last step. Moreover, by using arguments in the proof of Proposition \ref{appro} and the Borel-Cantelli lemma, it is easy to show that
 \begin{align*}
 \lim_{n\to\infty}\frac{\max_{0\leq i\leq n}(\tau_{i+1}-\tau_i)}{n}=0,\ P_{\lambda}\mathchar`-a.s.,
 \end{align*}
 thus \eqref{e:UpQ} converges to $0$ $P_{\lambda}$-a.s.

It is a standard result in the renewal theory (see \cite{C} for instance) that 
 \begin{align*}
 \lim_{n\to\infty}\frac{\eta_n}{n}=\frac{1}{E_{\lambda}[\tau_2-\tau_1]}=\frac{1}{E_{\lambda}^{\tt NB}[\tau_1]},\ P_{\lambda}\mathchar`-a.s.
 \end{align*}
therefore we get
\fi
\begin{align*}
\lim_{n\to\infty}\frac{1}{n}Q_n
&=\frac{1}{2E_{\lambda}[\tau_2-\tau_1]}E_{\lambda}\left[\sum_{j=\tau_1}^{\tau_2-1}B_{\lambda}^2(Z_j,Z_{j+1})\right] \\
&=\frac{1}{2E_{\lambda}^{\tt NB}[\tau_1]}E^{\tt NB}_{\lambda}\left[\sum_{j=0}^{\tau_1-1}B_{\lambda}^2(Z_j,Z_{j+1})\right].
\end{align*}

\item[{\bf 3) The estimate for $R_{n,h}$:}] 
For some constant $c<\infty$, we have $n\leq ch^{-2}$. Using this and \eqref{Ubound}, we have
\begin{align*}
\left|\sum_{j=0}^{n-1}\frac{h^3}{6}D_{\lambda+sh}(Z_j,Z_{j+1})\right|\leq \frac{ch}{3}\left(\frac{1}{\lambda^3}+1\right).
\end{align*}
By \eqref{BC} we have that
\begin{align*}
E_{\lambda}^{\Tr}&\left[\sum_{j=0}^{n-1}\left(B_{\lambda}^2(Z_j,Z_{j+1})+C_{\lambda}(Z_j,Z_{j+1})\right)\right]=0,\ P_{\lambda}\mathchar`-a.s. 
\end{align*}
hence, by using the similar argument to the above one, we see that
\begin{align*}
\lim_{n\to\infty}R_{n,h}=0,\ P_{\lambda}\mathchar`-a.s. 
\end{align*}
 Note also that $R_{n,h}$ satisfies the following uniform estimate for $h$ sufficiently small. 
 \begin{align}\label{R_n}
 |R_{n,h}|&\leq h^2n\left(\frac{1}{\lambda}+1+\frac{1}{2}\left(\frac{1}{\lambda^2}+1\right)\right)+\frac{ch}{3}\left(\frac{1}{\lambda^3}+1\right) \nonumber\\
 &\leq 2c\left(\frac{1}{\lambda}+1+\frac{1}{2}\left(\frac{1}{\lambda^2}+1\right)\right)+\frac{1}{\lambda^3}+1.
 \end{align}
 
 \item[{\bf 4) The joint CLT for}] 
 $\Bigl(n^{-1/2}(d(Z_n)-n\upsilon_\lambda),n^{-1/2}P_n\Bigr)_{n\geq1}${\bf :}
 We have given a proof of the annealed CLT for the sequences of random variables 
 $\{n^{-1/2}(d(Z_n)-n\upsilon_\lambda)\}_{n\geq1}$ and $\{n^{-1/2}P_n\}_{n\geq1}$,
 but in what follows, we need the joint CLT for the sequence of random vectors
 $\bigl(n^{-1/2}(d(Z_n)-n\upsilon_\lambda),n^{-1/2}P_n\bigr)$. 
 Note that for any $\lambda\in(\lambda_c^{1/2},\mu)$, 
 \begin{align*}
 \left(d(Z_{\tau_{l+1}})-d(Z_{\tau_l})-\upsilon_\lambda(\tau_{l+1}-\tau_l),\sum_{j=\tau_l}^{\tau_{l+1}-1}B_{\lambda}(Z_j,Z_{j+1})\right)_{l\geq1}
 \end{align*}
 are \IID $\mathbb{R}^2$-valued random variables under $P_{\lambda}$.
  \end{enumerate}
This fact together with the moment estimate of regeneration times immediately implies the following result. Note that for $\sigma_{10}(\lambda)$, we use $E_{\lambda}\left[\sum_{j=\tau_1}^{\tau_2-1}B_{\lambda}(Z_j,Z_{j+1})\right]=0$ from \eqref{e:BTT} and that $\sigma_{00}(\lambda)$ coincides with the diffusion constant in the central limit theorems proved in \cite{B3}.
 \begin{prp}\label{joint}
  Suppose $\lambda\in(\lambda_c^{1/2},\mu)$
 and that there exists $\beta>1$
 such that $\sum_{k=1}^{\infty}p_k\beta^k<\infty$.
 Then, the sequence $\bigl\{(n^{-1/2}(d(Z_n)-n\upsilon_\lambda),n^{-1/2}P_n)\bigr\}_{n\geq1}$
 under $P_{\lambda}$ converges weakly to
 the two dimensional Gaussian random variable $(X,Y)$ with the covariance matrix
 $\Sigma_{\lambda}:=(\sigma_{ij}(\lambda))_{0\leq i,j\leq1}$ given by
 \begin{align}\label{var}
 \sigma_{00}(\lambda)&:=\frac{1}{E_{\lambda}[\tau_2-\tau_1]}
 E_{\lambda}\left[\Bigl(\bigl(d(Z_{\tau_2})-d(Z_{\tau_1})\bigr)-E_{\lambda}[d(Z_{\tau_2})-d(Z_{\tau_1})]\Bigr)^2\right],\nonumber\\
 \sigma_{11}(\lambda)&:=\frac{1}{E_{\lambda}[\tau_2-\tau_1]}E_{\lambda}\left[\sum_{j=\tau_1}^{\tau_2-1}B_{\lambda}^2(Z_j,Z_{j+1})\right],\\
 \sigma_{10}(\lambda)=\sigma_{01}(\lambda)&:=\frac{1}{E_{\lambda}[\tau_2-\tau_1]}
 E_{\lambda}\left[\Bigl(d(Z_{\tau_2})-d(Z_{\tau_1})\Bigr)\sum_{j=\tau_1}^{\tau_2-1}B_{\lambda}(Z_j,Z_{j+1})\right]\nonumber.
 \end{align}
 Moreover, under $P^{\tt NB}_{\lambda}$ the sequence $\bigl\{(n^{-1/2}(d(Z_n)-n\upsilon_\lambda),n^{-1/2}P_n)\bigr\}_{n\geq1}$ converges weakly to the same two dimensional Gaussian random variable $(X,Y)$.
 \end{prp}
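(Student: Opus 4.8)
The plan is to run the standard renewal central limit theorem on the i.i.d.\ block decomposition and then pass to the random index. For $l\geq1$ set
\[
W_l:=\left(d(Z_{\tau_{l+1}})-d(Z_{\tau_l})-\upsilon_\lambda(\tau_{l+1}-\tau_l),\ \sum_{j=\tau_l}^{\tau_{l+1}-1}B_\lambda(Z_j,Z_{j+1})\right),
\]
which by the renewal structure and the observation in item 4) above form an i.i.d.\ sequence of $\mathbb{R}^2$-valued vectors under $P_\lambda$ (and under $P_\lambda^{\tt NB}$, where in addition the $l=0$ block has the same law). The first coordinate of $W_1$ is centred by the speed formula \eqref{speed} and the second by \eqref{e:BTT}; Corollary \ref{c:2pe} together with the bound \eqref{Ubound} gives $E_\lambda[|W_1|^2]<\infty$, so $W_1$ has a covariance matrix $\Gamma_\lambda$, and $E_\lambda[\tau_2-\tau_1]^{-1}\Gamma_\lambda$ is the matrix $\Sigma_\lambda$ of \eqref{var} once its entries are simplified as in the discussion preceding the statement (using \eqref{e:BTT} for the $(1,0)$ entry, the martingale property \eqref{B}, \eqref{BC} of $P_n$ for the $(1,1)$ entry, and identifying the $(0,0)$ entry with the diffusion constant of \cite{B3}).

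First I would replace the time-indexed pair $(d(Z_n)-n\upsilon_\lambda,\,P_n)$ by the full-block sum $\sum_{l=1}^{\eta_n-1}W_l$, where $\eta_n=\inf\{k:\tau_k\geq n\}$. Telescoping gives $\sum_{l=0}^{\eta_n-1}W_l=\bigl(d(Z_{\tau_{\eta_n}})-\upsilon_\lambda\tau_{\eta_n},\ \sum_{j=0}^{\tau_{\eta_n}-1}B_\lambda(Z_j,Z_{j+1})\bigr)$; the $l=0$ block is finite $P_\lambda$-a.s.\ (bounded by $(1+\lambda^{-1})\tau_1$ in the second coordinate), hence $o(\sqrt n)$ in $P_\lambda$-probability, while the overshoot satisfies $0\leq\tau_{\eta_n}-n\leq\max_{0\leq i\leq n}(\tau_{i+1}-\tau_i)$, which is $o(\sqrt n)$ in $P_\lambda$-probability by \eqref{max}. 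Combining these with $|d(Z_n)-d(Z_{\tau_{\eta_n}})|\leq\tau_{\eta_n}-n$ and, via \eqref{Ubound}, $|\sum_{j=n}^{\tau_{\eta_n}-1}B_\lambda(Z_j,Z_{j+1})|\leq(1+\lambda^{-1})(\tau_{\eta_n}-n)$ yields
\[
\frac{1}{\sqrt n}\left(\bigl(d(Z_n)-n\upsilon_\lambda,\,P_n\bigr)-\sum_{l=1}^{\eta_n-1}W_l\right)\longrightarrow 0\qquad\text{in }P_\lambda\text{-probability.}
\]
Next I would apply the multivariate random-index (Anscombe) central limit theorem to $\sum_{l=1}^{k}W_l$: by the Cram\'er--Wold device this reduces to the one-dimensional statement for $\sum_{l=1}^k\langle a,W_l\rangle$ with $a\in\mathbb{R}^2$ fixed, which is exactly the argument already carried out for $P_n$ in item 1) (strong law, finite variance, random-time replacement via \eqref{max}), combined with the renewal law of large numbers $\eta_n/n\to E_\lambda[\tau_2-\tau_1]^{-1}$, $P_\lambda$-a.s. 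Hence $n^{-1/2}\sum_{l=1}^{\eta_n-1}W_l\Rightarrow N(0,\Sigma_\lambda)$ under $P_\lambda$, and by the preceding display and Slutsky's lemma $(n^{-1/2}(d(Z_n)-n\upsilon_\lambda),\,n^{-1/2}P_n)$ has the same weak limit. For $P_\lambda^{\tt NB}$ the argument is verbatim the same --- all blocks, including $l=0$, are i.i.d.\ and the $l=0$ block is again negligible at scale $\sqrt n$ --- so the limiting Gaussian is identical.

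The step needing the most care is the passage from the time-indexed quantities to the block-indexed i.i.d.\ sum: one must check that the remainder in the displayed limit is genuinely $o(\sqrt n)$ in $P_\lambda$-probability and then invoke the random-index CLT with the correct covariance. All the required inputs --- the $(2+\varepsilon)$-th moment bound of Corollary \ref{c:2pe} and the maximal estimate \eqref{max} (which drives both the overshoot control and the a.s.\ vanishing of $n^{-1}\max_{0\leq i\leq n}(\tau_{i+1}-\tau_i)$) --- are, however, already available, which is why the statement follows quickly once the i.i.d.\ block vectors have been identified.
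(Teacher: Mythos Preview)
Your proposal is correct and follows essentially the same route as the paper: identify the i.i.d.\ $\mathbb{R}^2$-valued block vectors $W_l$, check centering via \eqref{speed} and \eqref{e:BTT} and square-integrability via Corollary \ref{c:2pe} and \eqref{Ubound}, control the overshoot by \eqref{max}, and apply the random-index (Anscombe/Cram\'er--Wold) CLT; the paper's own proof is extremely terse (``We have already proved the first claim''), relying on the surrounding discussion in items 1)--4), so your write-up simply spells out what the paper leaves implicit. For the $P_\lambda^{\tt NB}$ statement the paper argues by noting that the block law under $P_\lambda$ equals that under $P_\lambda^{\tt NB}$, whereas you rerun the same CLT with the $l=0$ block now i.i.d.\ with the rest --- both arguments are equivalent.
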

\begin{proof}
We have already proved the first claim. The second claim is immediate from the fact that the distribution
of $$\left(d(Z_{\tau_{2}})-d(Z_{1})-\upsilon_\lambda(\tau_{2}-\tau_1),\sum_{j=\tau_1}^{\tau_{2}-1}B_{\lambda}(Z_j,Z_{j+1})\right)\ \ {\rm under}\ P_{\lambda}$$ is same as that of $$\left(d(Z_{1})-\upsilon_\lambda\tau_1,\sum_{j=0}^{\tau_{1}}B_{\lambda}(Z_j,Z_{j+1})\right)\ \ {\rm under}\ P_{\lambda}^{\tt NB}.$$ 
\end{proof}

We now prove Lemma \ref{lem:replace} by using discussions given in this subsection.
\begin{proof}[Proof of Lemma \ref{lem:replace}.]
By the Markov property, we have that
\begin{align*}
&\Eb\left[E^{\mathbf{T}^*}_{\lambda+h}\left[\left(d(Z_n)-n\upsilon_\lambda\right)\ind_{\{\sigma_{e^*}=\infty\}}\right]\right]\\
&=\mathbb{E}\left[E^{\mathbf{T}^*}_{\lambda}\!\left[\left(d(Z_n)-n\upsilon_\lambda\right)\prod_{i=1}^{n}\frac{A_{\lambda+h}(Z_{i-1},Z_i)}{A_{\lambda}(Z_{i-1},Z_i)}\ind_{\{\sigma_{e^*}>n\}}
\cdot E^{\mathbf{T}^*}_{\lambda+h,Z_n}\left[\ind_{\{\sigma_{e^*}=\infty\}}\right]\right]\right]
\end{align*}
and
\begin{align*}
&\Eb\left[E^{\mathbf{T}^*}_{\lambda}\left[\left(d(Z_n)-n\upsilon_\lambda\right)\prod_{i=1}^{n}\frac{A_{\lambda+h}(Z_{i-1},Z_i)}{A_{\lambda}(Z_{i-1},Z_i)}\cdot\ind_{\{\sigma_{e^*}=\infty\}}\right]\right]\\
&=\mathbb{E}\left[E^{\mathbf{T}^*}_{\lambda}\left[\left(d(Z_n)-n\upsilon_\lambda\right)\prod_{i=1}^{n}\frac{A_{\lambda+h}(Z_{i-1},Z_i)}{A_{\lambda}(Z_{i-1},Z_i)}\ind_{\{\sigma_{e^*}>n\}}
\cdot E^{\mathbf{T}^*}_{\lambda,Z_n}\left[\ind_{\{\sigma_{e^*}=\infty\}}\right]\right]\right].
\end{align*}
Thus, by H\"{o}lder's inequality and Jensen's inequality we have that \eqref{eq:replace} is equal to
%\begin{align}
% &\frac{1}{\sqrt{n}}\Biggl|
%\Eb\left[E^{\mathbf{T}^*}_{\lambda+h}\left[\left(d(Z_n)-n\upsilon_\lambda\right)\ind_{\{\sigma_{e^*}=\infty\}}\right]\right]\\
%&\ \ \ \ \ \ \ \ \ \ \ \ \ \ \ \ \ \ \ \ \ \ \ \ \ \ \ \ \ \ \ \ \ \ \ \ \ \ \ \ \ \ \ \ -\Eb\left[E^{\mathbf{T}^*}_{\lambda}\left[\left(d(Z_n)-n\upsilon_\lambda\right)\prod_{i=1}^{n}\frac{A_{\lambda+h}(Z_{i-1},Z_i)}{A_{\lambda}(Z_{i-1},Z_i)}\ind_{\{\sigma_{e^*}=\infty\}}\right]\right]\Biggr|\nonumber\\
%&=\Biggl|\mathbb{E}\Biggl[E^{\mathbf{T}^*}_{\lambda}\Biggl[\left(\dfrac{d(Z_n)-n\upsilon_\lambda}{\sqrt{n}}\right)\ind_{\{\sigma_{e^*}>n\}}\cdot\prod_{i=1}^{n}\frac{A_{\lambda+h}(Z_{i-1},Z_i)}{A_{\lambda}(Z_{i-1},Z_i)}\nonumber\\
%&\ \ \ \ \ \ \ \ \ \ \ \ \ \ \ \ \ \ \ \ \ \ \ \ \ \ \ \ \ \ \ \ \ \ \ \ \ \ \ \ \  \  \cdot\left(E^{\mathbf{T}^*}_{\lambda+h,Z_n}\left[\ind_{\{\sigma_{e^*}=\infty\}}\right]-E^{\mathbf{T}^*}_{\lambda,Z_n}\left[\ind_{\{\sigma_{e^*}=\infty\}}\right]\right)\Biggr]\Biggr]\Biggr|\nonumber\\
%&\leq \mathcal{E}^1_{h,n}\cdot\mathcal{E}^2_{h,n}\cdot\mathcal{E}^3_{h,n},
%\end{align}
\begin{align*}
&\Biggl|\mathbb{E}\Biggl[E^{\mathbf{T}^*}_{\lambda}\Biggl[\left(\dfrac{d(Z_n)-n\upsilon_\lambda}{\sqrt{n}}\right)\ind_{\{\sigma_{e^*}>n\}}\cdot\prod_{i=1}^{n}\frac{A_{\lambda+h}(Z_{i-1},Z_i)}{A_{\lambda}(Z_{i-1},Z_i)}\nonumber\\
&\ \ \ \ \ \ \ \ \ \ \ \ \ \ \ \ \ \ \ \ \ \ \ \ \ \ \ \ \ \ \ \ \ \ \ \ \ \ \ \ \  \  \cdot\left(E^{\mathbf{T}^*}_{\lambda+h,Z_n}\left[\ind_{\{\sigma_{e^*}=\infty\}}\right]-E^{\mathbf{T}^*}_{\lambda,Z_n}\left[\ind_{\{\sigma_{e^*}=\infty\}}\right]\right)\Biggr]\Biggr]\Biggr|
\end{align*}
which is bounded above by $\mathcal{E}^1_{h,n}\cdot\mathcal{E}^2_{h,n}\cdot\mathcal{E}^3_{h,n}$ where
\begin{align*}
\mathcal{E}^1_{n}&:= \Eb\left[E^{\mathbf{T}^*}_{\lambda}\left[\left(\dfrac{d(Z_n)-n\upsilon_\lambda}{\sqrt{n}}\right)^2\ind_{\{\sigma_{e^*}>n\}}\right]\right]^{1/2},\\
\mathcal{E}^2_{h,n}&:=\Eb\left[E^{\mathbf{T}^*}_{\lambda}\left[\left(\prod_{i=1}^{n}\frac{A_{\lambda+h}(Z_{i-1},Z_i)}{A_{\lambda}(Z_{i-1},Z_i)}\right)^4\right]\right]^{1/4},\\
\mathcal{E}^3_{h,n}&:=E_{\lambda}\left[\left(E^{\mathbf{T}^*}_{\lambda+h,Z_n}\left[\ind_{\{\sigma_{e^*}=\infty\}}\right]-E^{\mathbf{T}^*}_{\lambda,Z_n}\left[\ind_{\{\sigma_{e^*}=\infty\}}\right]\right)^{4}\right]^{1/4}.
\end{align*}
It suffices to show that
\begin{align}
\sup_{n}\mathcal{E}^1_{n}&<\infty,\label{eq:e1}\\
\sup_{h,n:h^2n\sim1}\mathcal{E}^2_{h,n}&<\infty,\ {\rm and}\label{eq:e2}\\ 
\lim_{h,n:h^2n\sim1}\mathcal{E}^3_{h,n}&=0.\label{eq:e3}
\end{align}
%=\mathbb{E}\left[E_{\lambda}^{\mathbf{T}}\left[\right]\right]
%In order to prove \eqref{eq:replace}, it suffices to prove that
The estimate \eqref{eq:e1} follows from \eqref{est:ui2}. 

We now prove the estimate \eqref{eq:e2}.  Notice that 
 \begin{align*}
 \left(\prod_{i=1}^n\frac{A_{\lambda+h}(Z_{i-1},Z_i)}{A_{\lambda}(Z_{i-1},Z_i)}\right)^4
 &=\exp(4hP_n-4h^2Q_n+4R_{n,h})
 \intertext{and}
 \prod_{i=1}^n\frac{A_{\lambda+4h}(Z_{i-1},Z_i)}{A_{\lambda}(Z_{i-1},Z_i)}
 &=\exp(4hP_n-16h^2Q_n+R_{n,4h}).
 \end{align*}
 By the estimate \eqref{R_n}, there exists a constant $C_{\lambda}>0$ such that
 $|R_{n,h}|\leq C_{\lambda}$ and $|R_{n,3h}|\leq C_{\lambda}$.
 Since $h^2\sim n^{-1}$, there exists a constant $C'_{\lambda}$ such that $|h^2Q_n|\leq C'_{\lambda}$.
 Thus, there exists a constant $C''_{\lambda}>0$ such that
 \begin{align*}
\left(\prod_{i=1}^n\frac{A_{\lambda+h}(Z_{i-1},Z_i)}{A_{\lambda}(Z_{i-1},Z_i)}\right)^4
\leq C''_{\lambda}\left(\prod_{i=1}^n\frac{A_{\lambda+4h}(Z_{i-1},Z_i)}{A_{\lambda}(Z_{i-1},Z_i)}\right).
\end{align*}
Noticing that
\begin{align*}
E^{\Tr^*}_{\lambda}\left[\prod_{i=1}^n\frac{A_{\lambda+4h}(Z_{i-1},Z_i)}{A_{\lambda}(Z_{i-1},Z_i)}\right]
=E^{\Tr^*}_{\lambda+4h}[1]=1,
\end{align*}
we get the conclusion.\par
%\textcolor{red}{
Finally, we show \eqref{eq:e3}. Note that 
\begin{equation*}
Q(\lambda):=E^{\mathbf{T}^*}_{\lambda,Z_n}\left[\ind_{\{\sigma_{e^*}=\infty\}}\right]
=P^{\mathbf{T}^*}_{\lambda,Z_n}\left(\sigma_{e^*}=\infty\right)
\end{equation*}
is bounded above by $1$ and monotonically decreasing in $\lambda$. Furthermore, since $d(Z_n)$ converges $P_\lambda$-a.s.\ to $\infty$ as $\nin$, we have that $Q(\tilde{\lambda})$ converges $P_\lambda$-a.s.\ to $1$ for any $\tilde{\lambda}\in(\lambda_c,\mu)$. Fix $t>0$ such that $[\lambda-t,\lambda+t]\subset (\lambda_c,\mu)$ then for $|h|\leq t$, by bounded convergence theorem we then have that
\begin{equation*}
\mathcal{E}^3_{h,n}
\leq \sqrt{2}E_{\lambda}\left[P^{\mathbf{T}^*}_{\lambda-t,Z_n}\left(\sigma_{e^*}=\infty\right)-P^{\mathbf{T}^*}_{\lambda+t,Z_n}\left(\sigma_{e^*}=\infty\right)\right]^{1/4}
\end{equation*}
which converges to $0$ as $\nin$.  
%}
\end{proof}

\subsection{The proof of the differentiability of the speed}
In this subsection, we will prove Theorem \ref{dif}. 

%\textcolor{red}{I haven't fixed the following proof.}
\begin{proof}[Proof of Theorem \ref{dif}]
By Proposition \ref{joint} %and Proposition \ref{p:esc-conti}, 
it is now sufficient to prove that $\upsilon_{\lambda}'=E^{\tt NB}_{\lambda}[XY].$
% \textcolor{red}{ Recall that for the identity \eqref{},} we let $h$ tend to $0$ and $n$ tend to $\infty$ in such a way that $h^2n$ tends to 1.
By \eqref{AAA}, we have that
\begin{align}\label{BBB}
& E^{\tt NB}_{\lambda}\left[\left(d(Z_n)-n\upsilon_\lambda\right)\prod_{i=1}^{n}\frac{A_{\lambda+h}(Z_{i-1},Z_i)}{A_{\lambda}(Z_{i-1},Z_i)}%\ind_{\{\sigma_{e^*(\mathbf{T})}=\infty\}}
\right]\nonumber\\
 &=E^{\tt NB}_{\lambda}\left[\left(d(Z_n)-n\upsilon_\lambda\right)\exp(hP_n-h^2Q_n+R_{n,h})%\ind_{\{\sigma_{e^*(\mathbf{T})}=\infty\}}
 \right].
\end{align}
  Therefore, once we justify that we can pass to the limit in \eqref{BBB}, by using Lemma \ref{lem:replace} and Proposition \ref{joint}
 we will get 
 \begin{align}\label{conv}
 &\frac{1}{hn}E^{\tt NB}_{\lambda+h}\left[d(Z_n)-n\upsilon_\lambda %\ind_{\{\sigma_{\sigma(\mathbf{T}^*)}=\infty\}}
 \right]\nonumber\\
&\rightarrow E^{\tt NB}_{\lambda}\left[X\exp\left(Y-\frac{1}{2E_{\lambda}[\tau_2-\tau_1]}E_{\lambda}\left[\sum_{j=\tau_1}^{\tau_2-1}B_{\lambda}^2(Z_j,Z_{j+1})\right]\right)%\ind_{\{\sigma_{e^*(\mathbf{T})}=\infty\}}
\right] 
\end{align}
where $(X,Y)$ is the two dimensional Gaussian random variable with the covariance matrix $\Sigma_{\lambda}$.
 Notice that we have shown the continuity of the escape probability in Lemma \ref{p:esc-conti}.
 Since it is shown in \eqref{var} that 
 \begin{align*} 
{\rm Var}(Y)=\frac{1}{E^{\tt NB}_{\lambda}[\tau_1]}E^{\tt NB}_{\lambda}\left[\sum_{j=0}^{\tau_1-1}B_{\lambda}^2(Z_j,Z_{j+1})\right],
\end{align*}
 the above convergence and the integration by parts formula for Gaussian laws implies
 \begin{align*} 
 \frac{\upsilon_{\lambda+h}-\upsilon_\lambda}{h}
 \;\rightarrow\; E_{\lambda}^{\tt NB}\left[X\exp\left(Y-\frac{1}{2}{\rm Var}(Y)\right)\right]
 \;=\;E_{\lambda}^{\tt NB}[XY]=E_{\lambda}[XY].
 \end{align*}
 %\begin{comAdam}
 %Can you add in more detail or a reference here?
 %\end{comAdam}
 %\begin{comYuki}
 %I think this computation is standard, so we don't need to give a detailed explanation but perhaps should find some good reference.
 %\end{comYuki}
 %\begin{comYuki}
 %I believe we don't need to give further information about integration by parts for Gaussian r.v.s. Actually, in \cite{BGN} (the end of page 10), they just say this step is due to Girsanov's theorem.  
 %\end{comYuki}
 %\begin{comAdam}
 %Ok, sure. I just thought it should have been easy to find a reference for this in some textbook but I haven't been able to find one.
 %\end{comAdam}
 In order to justify the step
 \eqref{conv}, it suffices to show the uniform integrability of 
 \begin{align*}
 \left\{\frac{1}{hn}\Bigl(d(Z_n)-n\upsilon_\lambda\Bigr)\cdot
 \prod_{i=1}^n\frac{A_{\lambda+h}(Z_{i-1},Z_i)}{A_{\lambda}(Z_{i-1},Z_i)}\right\}_{n\geq1}.
 \end{align*}
 under $P_{\lambda}^{\tt NB}$.
  By H\"{o}lder's inequality, we have
 \begin{align*}
 &E_{\lambda}^{\tt NB}\left[\left(\frac{1}{hn}\Bigl(d(Z_n)-n\upsilon_\lambda\Bigr)\cdot
 \prod_{i=1}^n\frac{A_{\lambda+h}(Z_{i-1},Z_i)}{A_{\lambda}(Z_{i-1},Z_i)}\right)^{6/5}\right]\\
 &\leq E_{\lambda}^{\tt NB}\left[\frac{1}{(hn)^2}\Bigl(d(Z_n)-n\upsilon_\lambda\Bigr)^2\right]^{3/5}
 E_{\lambda}^{\tt NB}\left[\left(\prod_{i=1}^n\frac{A_{\lambda+h}(Z_{i-1},Z_i)}{A_{\lambda}(Z_{i-1},Z_i)}\right)^3\right]^{2/5}
 \end{align*}
 In Lemma \ref{ui},
 we have already seen that $E_{\lambda}^{\tt NB}\left[\frac{1}{(hn)^2}\bigl(d(Z_n)-n\upsilon_\lambda\bigr)^2\right]$ is bounded
 in $n$. That 
$E_{\lambda}^{\tt NB}\left[\left(\prod_{i=1}^n\frac{A_{\lambda+h}(Z_{i-1},Z_i)}{A_{\lambda}(Z_{i-1},Z_i)}\right)^3\right]$
is also bounded in $n$ follows from the estimate \eqref{eq:e2}.
 \end{proof}
 
\section{Uniform moment bounds on regeneration times}\label{s:uni}
%\textcolor{red}{In Peres-Zeitouni, they proved moment estimates of level-regeneration times but what we say here sounds like we think they proved them for ordinary regeneration times...}\\

In this section we study regeneration and return times for biased random walks on supercritical GW-trees whose offspring law has exponential moments and no deaths (i.e.\ $p_0=0$). 
First, we prove that for any $u\in\Nb$ and $[a,b]\subset (0,\mu)$ we have
\begin{equation}\label{e:uMom}
\sup_{\lambda\in[a,b]}E_{\lambda}^{\tt NB}\left[\tau_1^u\right]=\sup_{\lambda\in[a,b]}\Et_\lambda\left[\left(\tau_2-\tau_1\right)^u\right]<\infty.
\end{equation}
This will be used in the proof of Proposition \ref{p:UniMom} where we consider the case with leaves. Following this, we show that the escape probability is continuous in $\lambda$ thus proving Proposition \ref{p:esc-conti}.

Towards proving \eqref{e:uMom}, we note that, since the interval $[a,b]$ is compact, it suffices to show that for any $\lambda\in(0,\mu)$ and $u\in\Nb$ there exists $\varepsilon>0$ such that 
\[\sup_{|h|\leq \varepsilon}E_{\lambda}^{\tt NB}\left[\tau_1^u\right]=\sup_{|h|\leq \varepsilon}\Et_{\lambda+h}\left[\left(\tau_2-\tau_1\right)^u\right]<\infty.\]
For $\lambda<1$ this follows trivially by choosing $\varepsilon<1-\lambda$ and comparing with a biased random walk on $\Zb$ (e.g.\ Lemma 5.1 of \cite{depeze96}). We consider the case $\lambda\geq 1$ and proceed similarly to Proposition 3 in \cite{PZ} in which it is shown that $E_{\lambda}^{\tt NB}[\tau_1^u]=\Et_\lambda\left[\left(\tau_2-\tau_1\right)^u\right]<\infty$ for any $\lambda\in(0,\mu)$ and $u\in\Nb$. 

Our main contribution here is that we show that this bound is uniform in the bias $\lambda$ in compact intervals for which Remark \ref{r:Rayleigh} will play an important role. 
\begin{rem}\label{r:Rayleigh}
By Rayleigh's monotonicity principle we have that for any infinite tree $\Ts$ and any $v\in\Ts$, 
\[P^{\Ts}_{\lambda,v}(\sigma_e=\infty)\]
 is monotonically decreasing in $\lambda$. This follows using the relationship between electrical networks and reversible Markov chains (see \cite{LP} for further detail).
%Using the relationship between electric networks and reversible Markov chains (see \cite{LP} for further detail), by Rayleigh's monotonicity principle we have that for any infinite tree $\Ts$, $P^{\Ts}_{\lambda}(\sigma_e=\infty)$ is monotonically decreasing in $\lambda$.
% 
%For any infinite tree $\Ts$, $P^{\Ts}_{\lambda}(\sigma_e=\infty)$ is monotonically decreasing in $\lambda$. 
%This follows from Rayleigh's monotonicity principle using the relationship between electric networks and reversible Markov chains, see \cite{LP} for further detail.
\end{rem}
We now show that the speed $\upsilon_\lambda$ is bounded away from $0$ uniformly in $\lambda$ in compact subsets of $[1,\mu)$.
\begin{lem}\label{l:SpBnd}
Suppose $p_0=0$. %and that there exists $\beta>1$ such that $\sum_{k\geq 1}p_k\beta^k<\infty$. 
For any $b\in[1,\mu)$ there exists a constant $c_b>0$ such that 
\[\inf_{\lambda\in[1,b]}\upsilon_\lambda\geq c_b.\]
\end{lem}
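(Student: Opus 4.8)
The plan is to use the formula $\upsilon_\lambda = E_\lambda^{\tt NB}[d(Z_{\tau_1})]/E_\lambda^{\tt NB}[\tau_1]$ from \eqref{speed}, and to bound the numerator below and the denominator above, uniformly in $\lambda\in[1,b]$. For the denominator, \eqref{e:uMom} (already available in this section, with $u=1$) gives $\sup_{\lambda\in[1,b]}E_\lambda^{\tt NB}[\tau_1]<\infty$, so it remains only to bound $E_\lambda^{\tt NB}[d(Z_{\tau_1})]$ away from $0$. Since $d(Z_{\tau_1})\geq 1$ always, the crude bound $E_\lambda^{\tt NB}[d(Z_{\tau_1})]\geq 1$ already suffices, so in fact the only real content is the uniform upper bound on $E_\lambda^{\tt NB}[\tau_1]$, which is exactly \eqref{e:uMom}. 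Thus the lemma follows by setting $c_b := \bigl(\sup_{\lambda\in[1,b]}E_\lambda^{\tt NB}[\tau_1]\bigr)^{-1}>0$.

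Let me write this out. First I would recall that by \eqref{speed},
\begin{align*}
\upsilon_\lambda = \frac{E_\lambda^{\tt NB}[d(Z_{\tau_1})]}{E_\lambda^{\tt NB}[\tau_1]}.
\end{align*}
By the definition of a regeneration time, $d(Z_{\tau_1})\geq d(Z_0)+1 = 1$ (the walk has strictly increased its maximal level at $\tau_1$), so $E_\lambda^{\tt NB}[d(Z_{\tau_1})]\geq 1$. Next, since $p_0=0$ and $[1,b]\subset(0,\mu)$, the uniform moment bound \eqref{e:uMom} with $u=1$ gives a finite constant $M_b := \sup_{\lambda\in[1,b]}E_\lambda^{\tt NB}[\tau_1]<\infty$. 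Combining these,
\begin{align*}
\inf_{\lambda\in[1,b]}\upsilon_\lambda \geq \inf_{\lambda\in[1,b]}\frac{1}{E_\lambda^{\tt NB}[\tau_1]} = \frac{1}{M_b} =: c_b > 0,
\end{align*}
which is the claim.

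The main obstacle — and the reason this lemma is stated at all — is entirely contained in the uniform upper bound $\sup_{\lambda\in[1,b]}E_\lambda^{\tt NB}[\tau_1]<\infty$, i.e.\ \eqref{e:uMom}, whose proof occupies the remainder of this section (and for which Remark \ref{r:Rayleigh} on Rayleigh monotonicity of escape probabilities is the key tool). Once \eqref{e:uMom} is in hand, the lemma is immediate. One subtlety worth flagging: if instead one wanted to avoid invoking the full strength of \eqref{e:uMom} and argue directly, one could try to bound $E_\lambda^{\tt NB}[\tau_1]$ by relating it to the expected excursion length before escape and to the escape probability $\Eb[P_\lambda^{\Tr^*}(\sigma_{e^*}=\infty)]$ (which is positive and continuous on $(\lambda_c,\mu)$ by Proposition \ref{p:esc-conti}, hence bounded below on $[1,b]$); but this still requires controlling excursion lengths uniformly, which is no easier than \eqref{e:uMom} itself. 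So the cleanest route is simply to cite \eqref{e:uMom}.
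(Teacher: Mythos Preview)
Your argument is circular. You invoke \eqref{e:uMom} to bound $\sup_{\lambda\in[1,b]}E_\lambda^{\tt NB}[\tau_1]$, but \eqref{e:uMom} is precisely what this section is building towards: it is established as Proposition~\ref{p:UnMo}, and the proof of that proposition explicitly \emph{uses} Lemma~\ref{l:SpBnd}. Indeed, near the end of the proof of Proposition~\ref{p:UnMo} one needs to show that $P_\lambda^{\tt NB}(\Delta_n\geq n^2)\to 0$ uniformly in $\lambda\in[1,b]$, and this is done via
\[
P_\lambda^{\tt NB}(\Delta_n\geq n^2)\leq \frac{E_\lambda^{\tt NB}[d(Z_{\tau_1})]}{\upsilon_\lambda\, n},
\]
where the uniform positivity of $\upsilon_\lambda$ is supplied by Lemma~\ref{l:SpBnd}. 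So the logical order is: Lemma~\ref{l:SpBnd} $\Rightarrow$ Proposition~\ref{p:UnMo} $\Rightarrow$ \eqref{e:uMom}, and you cannot run it in reverse.

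The paper therefore proves Lemma~\ref{l:SpBnd} by direct, independent means: for $\lambda$ bounded away from $1$ it quotes the explicit lower bound $\upsilon_\lambda\geq (1-\lambda^{-1})^3(1-q_\lambda)^2/12$ from \cite{LPP3}, and for $\lambda$ in a neighbourhood of $1$ it uses A{\"\i}d{\'e}kon's formula for the speed from \cite{A1} together with Rayleigh monotonicity (Remark~\ref{r:Rayleigh}) to bound the escape probabilities $\tilde p_\lambda^{(i)}$ uniformly. Neither ingredient touches the moments of $\tau_1$. Your observation that $d(Z_{\tau_1})\geq 1$ and that \eqref{speed} reduces the problem to bounding $E_\lambda^{\tt NB}[\tau_1]$ is correct in isolation, but in the paper's logical architecture that bound is downstream of the lemma, not upstream.
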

\begin{proof}
By Theorem 3.1 of \cite{LPP3}, for $\lambda \in (1,\mu)$ we have that $\upsilon_\lambda\geq (1-\lambda^{-1})^3(1-q_\lambda)^2/12$ where $q_\lambda$ is the smallest non-negative solution to $f(1-\lambda^{-1}(1-q_\lambda))=q_\lambda$. It is immediate from this that for any $a>1$ there exists $c_{a,b}>0$ such that 
\[\inf_{\lambda\in[a,b]}\upsilon_\lambda\geq c_{a,b}.\]
It therefore remains to consider $\lambda$ arbitrarily close to $1$. 

%Specifically, write
%\[F(t):=\frac{f(1)-f(1-t)}{t}\]
%then
%$f(1-\lambda^{-1}(1-q_\lambda))=q_\lambda$ is equivalent to $F(\lambda^{-1}(1-q_\lambda))=\lambda$.
%Note that $F$ is continuous and $\lim_{t\rightarrow 0}F(t)=f'(1)=\mu>\lambda+\varepsilon$ for some $\varepsilon$ depending only on $b$. Furthermore, $F(1)=1<\lambda-\varepsilon$ for some $\varepsilon$ depending only on $a$. It follows by the intermediate value theorem that for every $\lambda\in[a,b]$ there exists $t\in(\delta,1-\delta)$ (for some $\delta>0$ depending only on $a,b$) such that $F(t)=\lambda$. It then follows that 
%\[\upsilon_\lambda\geq \frac{1}{12}(1-\lambda^{-1})^3(1-q_\lambda)^2\geq \frac{1}{12}(1-a^{-1})^3\delta^2\]
%uniformly over $\lambda\in[a,b]$.

Let $\xi$ be a random variable with the offspring distribution. By Theorem 1.1 of \cite{A1} we have that 
\[\upsilon_\lambda=\Eb\left[\frac{(\xi-\lambda)\tilde{p}_\lambda^{(0)}}{\lambda-1+\sum_{i=0}^\xi\tilde{p}^{(i)}_\lambda}\right]\bigg/\Eb\left[\frac{(\xi+\lambda)\tilde{p}_\lambda^{(0)}}{\lambda-1+\sum_{i=0}^\xi\tilde{p}^{(i)}_\lambda}\right]\]
where $\tilde{p}^{(i)}_\lambda$ are independent copies of $\Pt_\lambda^\Tr(\sigma_e=\infty)$ (which are also independent of $\xi$).

Since $\tilde{p}^{(i)}_\lambda$ are independent of $\xi$ we have that, for $\lambda \in[1,3/2]$, 
\begin{align}
&\Eb\left[\frac{(\xi-\lambda)\tilde{p}_\lambda^{(0)}}{\lambda-1+\sum_{i=0}^\xi\tilde{p}^{(i)}_\lambda}\right]\notag\\
& =\sum_{k=1}^\infty \Pb(\xi=k)\Eb\left[\frac{(k-\lambda)\tilde{p}_\lambda^{(0)}}{\lambda-1+\sum_{i=0}^k\tilde{p}^{(i)}_\lambda}\right] \label{e:SpLo}\\
& \geq p_1(1-\lambda)\Eb\left[\frac{\tilde{p}_\lambda^{(0)}}{\lambda-1+\sum_{i=0}^1\tilde{p}^{(i)}_\lambda}\right]+\frac{1}{4}\sum_{k=2}^\infty p_k\Eb\left[\frac{k\tilde{p}_\lambda^{(0)}}{\lambda-1+\sum_{i=0}^k\tilde{p}^{(i)}_\lambda}\right] \notag
\end{align}
since $\xi-\lambda\geq \xi/4$ for $\xi\geq 2$ and $\lambda \leq 3/2$. Similarly,
\begin{align}\label{e:SpUp}
\Eb\left[\frac{(\xi+\lambda)\tilde{p}_\lambda^{(0)}}{\lambda-1+\sum_{i=0}^\xi\tilde{p}^{(i)}_\lambda}\right]
& \leq 2\sum_{k=1}^\infty p_k\Eb\left[\frac{k\tilde{p}_\lambda^{(0)}}{\lambda-1+\sum_{i=0}^k\tilde{p}^{(i)}_\lambda}\right].
\end{align}

By Remark \ref{r:Rayleigh}, for any tree $\tilde{p}^{(i)}_\lambda$ is decreasing in $\lambda$. Moreover, $\Pb(\tilde{p}^{(i)}_{1+\varepsilon}>0)>0$ for any $\varepsilon\in(0,\mu-1)$. It follows that there exists $c>0$ such that for any $k\geq 1$ and $\lambda\in [1,1+\varepsilon]$ for $\varepsilon>0$ suitably small we have that 
%\begin{align*}
% \frac{k}{k+1}\left(1-\Eb\left[\frac{\lambda-1}{\lambda-1+\sum_{i=0}^k\tilde{p}^{(i)}_\lambda}\right]\right)
% = \Eb\left[\frac{k\tilde{p}_\lambda^{(0)}}{\lambda-1+\sum_{i=0}^k\tilde{p}^{(i)}_\lambda}\right] 
% \leq \frac{k}{k+1}  
%\end{align*}
\begin{align}
\frac{k}{k+1} 
& \geq \Eb\left[\frac{k\tilde{p}_\lambda^{(0)}}{\lambda-1+\sum_{i=0}^k\tilde{p}^{(i)}_\lambda}\right] \notag\\
& =\frac{k}{k+1}\left(1-\Eb\left[\frac{\lambda-1}{\lambda-1+\sum_{i=0}^k\tilde{p}^{(i)}_\lambda}\right]\right) \notag\\
& \geq \frac{k}{k+1}\left(1-\Eb\left[\frac{\varepsilon}{\varepsilon+\sum_{i=0}^k\tilde{p}^{(i)}_{1+\varepsilon}}\right]\right) \notag\\
& \geq \frac{ck}{k+1}. \label{e:kkp1}
\end{align}
In particular, we can choose $\varepsilon>0$ sufficiently small such that 
\[p_1(\lambda-1)\Eb\left[\frac{\tilde{p}_\lambda^{(0)}}{\lambda-1+\sum_{i=0}^1\tilde{p}^{(i)}_\lambda}\right]\leq \frac{1}{8}\sum_{k=2}^\infty p_k\Eb\left[\frac{k\tilde{p}_\lambda^{(0)}}{\lambda-1+\sum_{i=0}^k\tilde{p}^{(i)}_\lambda}\right]\]
uniformly over $\lambda \in[1,1+\varepsilon]$. Combining this with \eqref{e:SpLo} and \eqref{e:SpUp} we have
\begin{align*}
\upsilon_\lambda & \geq \frac{1}{16} \sum_{k=2}^\infty p_k\Eb\left[\frac{k\tilde{p}_\lambda^{(0)}}{\lambda-1+\sum_{i=0}^k\tilde{p}^{(i)}_\lambda}\right]\bigg/  \sum_{k=1}^\infty p_k\Eb\left[\frac{k\tilde{p}_\lambda^{(0)}}{\lambda-1+\sum_{i=0}^k\tilde{p}^{(i)}_\lambda}\right]
\end{align*}
which is bounded below for $\lambda \in[1,1+\varepsilon]$ for $\varepsilon>0$ suitably small using \eqref{e:kkp1}
\end{proof}

We now use the Girsanov formula \eqref{Gir} to obtain a useful bound relating the laws for different values of $\lambda$. 
Let $\Delta_n:=\inf\{m\geq 0: d(Z_m)=n\}$ be the first time the walk reaches distance $n$ from the root. 
\begin{lem}\label{l:Gir}
For any tree $\Ts$ of height at least $n$, $\lambda\in(0,1]$ and $h\in(0,\lambda)$ we have that 
\[\Pt_{\lambda-h}^{\Ts}(\Delta_n> m,\sigma_e>m)\leq e^{nh}\Pt_{\lambda}^{\Ts}(\Delta_n> m, \sigma_e>m).\]
\end{lem}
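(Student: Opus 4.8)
The plan is to move from bias $\lambda-h$ to bias $\lambda$ by means of the discrete Girsanov formula \eqref{Gir}, and then to bound the resulting likelihood ratio along every trajectory of the event.

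First I would apply \eqref{Gir} with the deterministic stopping time $S=m$ (trivially an $(\Fc_n(\Ts))$-stopping time), with the $\Fc_m(\Ts)$-measurable functional $F=\ind_{\{\Delta_n>m,\,\sigma_e>m\}}$, and with $\lambda$ as the base bias and $-h$ as the increment (permissible since $-h>-\lambda$). This yields
\[\Pt^{\Ts}_{\lambda-h}(\Delta_n>m,\sigma_e>m)=E^{\Ts}_{\lambda}\!\left[\ind_{\{\Delta_n>m,\,\sigma_e>m\}}\prod_{i=1}^{m}\frac{A_{\lambda-h}(Z_{i-1},Z_i)}{A_{\lambda}(Z_{i-1},Z_i)}\right],\]
so it is enough to show that, on the event $\{\Delta_n>m\}$, the weight $\prod_{i=1}^{m}A_{\lambda-h}(Z_{i-1},Z_i)/A_{\lambda}(Z_{i-1},Z_i)$ is bounded by $e^{nh}$; taking $E^{\Ts}_{\lambda}$ then gives the assertion.

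For the pointwise bound I would work step by step. Set $R:=\tfrac{\lambda+1}{\lambda-h+1}>1$. From the transition probabilities, a step from a vertex $x$ to one of its children carries the factor $\tfrac{\lambda+\nu(x)}{\lambda-h+\nu(x)}$ (and the factor $1$ when $x=e$); since $t\mapsto\tfrac{\lambda+t}{\lambda-h+t}$ is decreasing and every vertex has at least one child (the trees relevant in this section satisfy $p_0=0$), this factor is at most $R$. A step from $x$ to $\pi(x)$ carries the factor $\tfrac{(\lambda-h)(\lambda+\nu(x))}{\lambda(\lambda-h+\nu(x))}\le\tfrac{\lambda-h}{\lambda}\,R$, and I would then check the inequality $\tfrac{\lambda-h}{\lambda}\,R\le R^{-1}$: it is equivalent to $(\lambda-h)(\lambda+1)^2\le\lambda(\lambda-h+1)^2$, which after expansion and cancellation reads $\lambda(\lambda-h)\le1$ — and this is exactly where the hypothesis $\lambda\le1$ enters. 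Hence every step towards a child has weight $\le R$ and every step towards the parent has weight $\le R^{-1}$, so the full product is at most $R^{A-B}$, where $A$ and $B$ count, respectively, the steps towards a child and the steps towards a parent among the first $m$ steps. Since $A-B=d(Z_m)-d(Z_0)=d(Z_m)$ (as $Z_0=e$) and $\{\Delta_n>m\}$ forces $d(Z_m)\le n-1$, and since $R=1+\tfrac{h}{\lambda-h+1}\le e^{h}$ (using $\lambda\ge h$), this is at most $R^{n-1}\le e^{(n-1)h}\le e^{nh}$, as required.

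The main obstacle is modest: it is the Girsanov change of measure itself that must be handled with care — verifying that $m$ is a genuine stopping time and that $F$ is $\Fc_m(\Ts)$-measurable, so that \eqref{Gir} applies here (whereas it would not at a regeneration time). The arithmetic inequality $\lambda(\lambda-h)\le1$ is the key mechanism, since it makes a backward step precisely offset a forward step in the weight and thereby collapses the a priori wildly fluctuating likelihood ratio into the single geometric factor $R^{d(Z_m)}$, which is controlled because the walk stays strictly below level $n$.
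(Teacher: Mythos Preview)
Your proof is correct and follows essentially the same route as the paper: apply the Girsanov identity at the deterministic time $S=m$, then bound the likelihood ratio on $\{\Delta_n>m,\sigma_e>m\}$ using that $\lambda(\lambda-h)\le 1$ together with $\nu\ge 1$ (the $p_0=0$ standing assumption of this section), and finish via $d(Z_m)\le n-1$. The only cosmetic difference is in the bookkeeping of the second step: the paper pairs each backward crossing with a specific earlier forward crossing of the \emph{same} edge and shows each such pair has ratio $\le 1$, leaving only the geodesic to $Z_m$; your uniform per-step bounds (forward $\le R$, backward $\le R^{-1}$) collapse the product directly to $R^{d(Z_m)}$ and are arguably tidier.
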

\begin{proof}
First note that the function $F((Z_{k})_{k\geq 0})=\ind_{\{\Delta_n> m,\sigma_e>m\}}$ is measurable with respect to $\Fc_m(\Ts)$ therefore, by the Girsanov formula \eqref{Gir} we have that 
\begin{align}\label{e:Gir}
\Pt_{\lambda-h}^{\Ts}(\Delta_n> m,\sigma_e>m)=\Et_{\lambda}^{\Ts}\left[\ind_{\{\Delta_n> m,\sigma_e>m\}}\prod_{i=1}^m\frac{A_{\lambda-h}(Z_{i-1},Z_i)}{A_{\lambda}(Z_{i-1},Z_i)}\right].
\end{align}

For a walk started from the root, every time the walk takes a step back towards the root it crosses an edge that has previously been crossed. In particular, there is a most recent time that edge was crossed and, due to the tree structure, it must have been crossed directed away from the root. 
It follows that, for any path $(z_k)_{k=0}^m$ in $\Ts$, every pair $(z_{i-1},z_i)$ either corresponds to a unique pair $(z_{j-1},z_j)$ using this coupling or belongs to the unique self avoiding path starting from the root and ending at $z_m$. Denote by $\gamma$ this unique path of length $d(z_m)$. 

For a neighbouring pair of vertices $x,y\in\Ts$ it is straightforward to show that 
\[\frac{A_{\lambda-h}(x,y)A_{\lambda-h}(y,x)}{A_{\lambda}(x,y)A_{\lambda}(y,x)}\leq 1\]
for $\lambda\in(0,1]$ and $h\in(0,\lambda)$.  It follows that, 
\begin{align}\label{e:Canc}
\prod_{i=1}^{d(z_m)}\frac{A_{\lambda-h}(z_{i-1},z_i)}{A_{\lambda}(z_{i-1},z_i)}=\prod_{x\in\gamma\setminus\{z_0,z_m\}}\frac{\lambda+\nu(x)}{\lambda-h+\nu(x)} \leq e^{h(d(z_m)-1)}.
\end{align}
Noting that $\{\Delta_n>m\}\subset\{d(Z_m)< n\}$, combining \eqref{e:Gir} and \eqref{e:Canc} completes the proof.
\end{proof}
%\begin{comAdam}
%You questioned the monotonicity of $A_{\lambda}(x,y)A_{\lambda}(y,x)$ in $\lambda$ in an email noting that if neither $x$ nor $y$ is the root then the `product is $\lambda/(\lambda+k)(\lambda+l)$ for some positive integers k and l, which tends to $0$ as $\lambda$ goes to infinity'.

%The key point here is that we are only considering $\lambda\leq 1$. We want to show that for $h\in(0,\lambda)$ that
%\[\frac{A_{\lambda}(x,y)A_{\lambda-h}(y,x)}{A_{\lambda-h}(x,y)A_{\lambda}(y,x)}=\frac{(\lambda-h)(\lambda+\nu(x))(\lambda+\nu(y))}{\lambda(\lambda-h+\nu(x))(\lambda-h+\nu(y))}\leq 1.\]
%This is true if and only if 
%\begin{align}\label{e:com}
%(\lambda-h)(\lambda+\nu(x))(\lambda+\nu(y)) \leq \lambda(\lambda-h+\nu(x))(\lambda-h+\nu(y)). 
%\end{align}
%The left hand side is equal to 
%\[\lambda^3+\lambda^2(\nu(x)+\nu(y)-h)+\lambda(\nu(x)\nu(y)-h\nu(x)-h\nu(y)) -h\nu(x)\nu(y)\]
%and the right hand side is equal to 
%\[\lambda^3+\lambda^2(\nu(x)+\nu(y)-2h)+\lambda(\nu(x)\nu(y)+h^2-h\nu(x)-h\nu(y)) \]
%therefore \eqref{e:com} reduces to 
%\[-h\nu(x)\nu(y) \leq -\lambda^2 h+\lambda h^2\]
%which, since $h\geq 0$, holds for all $\nu(x),\nu(y)\geq 1$ if and only if $\lambda(\lambda-h)\leq 1$. This holds since $\lambda\leq 1$ and $h\geq 0$.
%\end{comAdam}

An important result that we will use in the following proof is that the distance between regenerations have exponential moments. That is, by Lemma 4.2 of \cite{DGPZ} we have that for any $\lambda\in(0,\mu)$ there exists $\theta(\lambda)=:\theta>0$ such that $\Et_\lambda[e^{\theta d(Z_{\tau_1})}]<\infty$. In fact, we require the stronger uniform moment bound Lemma \ref{l:empMom},
whose proof is a straightforward extension of that of Lemma 4.2 in \cite{DGPZ} using Remark \ref{r:Rayleigh} which we omit.
%By using the fact that $\Pt_{\lambda,x}^{\Ts}(\sigma_e=\infty)$ is nonincreasing in $\lambda$, Lemma 4.2 in \cite{DGPZ} straightforwardly extends to prove the following lemma. 
%We defer the proof to Appendix \ref{s:App} since it is essentially identical to that in \cite{DGPZ}.
\begin{lem}\label{l:empMom}
Suppose $p_0=0$. %and that there exists $\beta>1$ such that $\sum_{k\geq 1}p_k\beta^k<\infty$. 
For any $[a,b]\subset(0,\mu)$ there exists $\theta(a,b)=:\theta>0$ such that
\begin{align}\label{e:empMom}
\sup_{\lambda\in[a,b]}\Et_\lambda[e^{\theta d(Z_{\tau_1})}]<\infty.
\end{align}
\end{lem}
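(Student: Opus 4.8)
The plan is to mimic the argument of Lemma 4.2 in \cite{DGPZ}, tracking carefully that every constant produced can be chosen uniformly over $\lambda\in[a,b]$. Recall the structure of that proof: one shows that, conditionally on not yet having regenerated, the probability that the walk advances a further fixed number of levels $L$ before either backtracking to the current level or creating a potential regeneration point is bounded below by a positive constant, and that on such an advance there is a uniformly positive chance that the new level is in fact a regeneration level. Iterating this gives geometric-type tail bounds on $d(Z_{\tau_1})$, hence the exponential moment \eqref{e:empMom} once $\theta$ is taken small enough.

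First I would fix the height parameter and reduce everything to two quenched quantities: (i) the escape probability $P^{\Ts}_{\lambda,v}(\sigma_e=\infty)$ from a vertex $v$ into the subtree rooted at $v$, which by Remark \ref{r:Rayleigh} is monotone decreasing in $\lambda$ and hence bounded below on $[a,b]$ by its value at $\lambda=b$, uniformly in the tree; and (ii) the probability that from a given vertex the walk reaches level $+L$ before returning to level $0$, which for $\lambda\le b<\mu$ is again bounded below uniformly, since the relevant comparison (to a biased walk on $\Zb$ with bias $b/\text{(min degree)}$, or to the explicit estimates of \cite{LPP3}) only gets better as $\lambda$ decreases. Combining these two inputs, the probability that a fresh regeneration occurs within the next $L$ levels is at least some $\rho=\rho(a,b)>0$ independent of $\lambda\in[a,b]$. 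This is exactly the place where the one-parameter statement of \cite{DGPZ} must be upgraded, and Remark \ref{r:Rayleigh} is the key tool that makes the upgrade essentially free.

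Second, with the uniform $\rho$ in hand, the distance to the first regeneration is stochastically dominated, uniformly in $\lambda\in[a,b]$, by $L$ times a geometric random variable with success probability $\rho$ (plus the a.s.\ finite overshoot controlled by the exponential-moment comparison with a walk on $\Zb$ for the "within one block" fluctuation). Hence $\Pt_\lambda(d(Z_{\tau_1})>kL)\le (1-\rho)^k$ uniformly, and choosing $\theta<\theta(a,b):=L^{-1}\log\frac{1}{1-\rho}$ gives $\sup_{\lambda\in[a,b]}\Et_\lambda[e^{\theta d(Z_{\tau_1})}]<\infty$ by summing the geometric series.

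The main obstacle is purely bookkeeping: making sure that each probabilistic estimate borrowed from the $p_0=0$ analysis of \cite{DGPZ} and \cite{LPP3} is genuinely uniform over the compact interval $[a,b]$. For the lower bounds on escape and level-crossing probabilities this is handled by monotonicity (Remark \ref{r:Rayleigh}) and by the fact that the bias-$\lambda$ walk is dominated by the bias-$a$ walk in the relevant direction; for the overshoot term one uses that $[a,b]$ is bounded away from $0$ so the geometric comparison on $\Zb$ has a uniformly controlled tail. Since this is a routine—if slightly tedious—adaptation, I would state it as an extension of \cite[Lemma 4.2]{DGPZ} and omit the repetitive details, exactly as the paper does.
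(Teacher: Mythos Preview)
Your proposal is correct and matches the paper's approach exactly: the paper simply states that the proof is a straightforward extension of Lemma~4.2 in \cite{DGPZ} using Remark~\ref{r:Rayleigh} and omits the details. Your write-up is essentially the omitted argument, with the monotonicity of the escape probability in $\lambda$ supplying the required uniformity over $[a,b]$.
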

%\begin{proof}
%%Since the claim can be proved by several modifications of arguments in \cite{DGPZ}, we will only give the sketch of the proof.
%Note that by Rayleigh's monotonicity principle (see for example \cite{LP}), for any infinite tree $\Ts$, $P^{\Ts}_{\lambda}(\sigma_e=\infty)$
%is monotonically decreasing in $\lambda$.
%Thus for $\delta>0$ and a vertex $j$ such that $\pi(j)=e$, the random variable $$A(\Ts,j,\lambda):={\bf 1}_{\{\nu(e)\geq2\}}{\bf 1}_{\{P^{\Ts\setminus\Ts(j)}_{\lambda}(\sigma_e=\infty)\geq \delta\}}$$
% is also monotonically decreasing in $\lambda$. The claim can be proved by using this fact and arguments in the proofs
% of \cite[Lemma 2.2, Lemma 4.2]{DGPZ}.
%\end{proof}
%\begin{comYuki}
%I think we can omit the proof here because taking the monotonicity in $\lambda$ of the event ``$\{P_{\lambda,\omega_j^R}\geq\delta\}$''
%into account is the only modification required.
%\end{comYuki}

\if0
First note that it suffices to show that 
\[\sup_{\lambda\in[a,b]}\Pt_\lambda(d(Z_{\tau_1})>t)\leq Ce^{-\varsigma t}\]
for some $C,\varsigma>0$.

We first describe a path decomposition due to \cite{ke77}. Fix $S_0=0$ and $\tilde{\tau}_0:=\inf\{k>0:Z_k=e\}$ then
\begin{enumerate}
\item
if $\tilde{\tau}_0=\infty$, define $K=0$ and $S_i=0$ for all $i\geq 1$;
\item
else, define $M_0=\max\{d(Z_k):n\leq \tilde{tau}_0\}$ to be the furthest distance reached before returning to the root and $S_1:=\min\{n:d(Z_n)>M_0\}$ to be the first time the walk reaches further that $M_0$.
\end{enumerate}
Recursively, define $\tilde{\tau}_i:=\inf\{k>S_i:d(Z_k)=d(Z_{S_i})-1\}$ and
\begin{enumerate}
\item
if $\tilde{\tau}_i=\infty$ then set $K=i$ and $S_n=\infty$ for all $n\geq i$;
\item 
else set $M_i=\max\{d(Z_k):n\leq \tilde{tau}_i\}$  and $S_{i+1}:=\min\{n:d(Z_n)>M_i\}$.
\end{enumerate}
We note that if $K>0$ then $S_K$ is the first level regeneration distance and if $K=0$ then $d(Z_{\tau_1})=1$.

For $t>1$ 
\begin{align}
\Pt_\lambda(d(Z_{\tau_1})>t) & = \Pt_\lambda\left(\sum_{i=1}^K(d(Z_{S_i})-d(Z_{S_{i-1}})>t\right)\notag \\
& = \sum_{j=1}^\infty\Pt_\lambda\left(\sum_{i=1}^j(d(Z_{S_i})-d(Z_{S_{i-1}})>t, \tilde{\tau}_{j-1}<\infty,\tilde{\tau}_j=\infty\right).\label{e:posSum}
\end{align}
Note that $(d(Z_{S_i})-d(Z_{S_{i-1}}))_{i\geq 1}$ are independent since they only depend on the part of the tree which is unseen up to the stopping time $S_{i-1}$. In particular, by Lemma 4.4 of \cite{DGPZ}, for $C_i\subset \Nb$ we have that
\begin{align}
&\Pt_\lambda(\{d(Z_{S_i})-d(Z_{S_{i-1}})\in C_j\}_{i=1}^j, \tilde{\tau}_{j-1}<\infty,\tilde{\tau}_j=\infty) \label{e:indSec}\\
&\qquad = \Pt_\lambda(d(Z_{S_1})\in C_1, \tilde{\tau}_0<\infty)\Pt_\lambda(\tilde{\tau}_1=\infty)\prod_{i=2}^j\Pt_\lambda(d(Z_{S_2})-d(Z_{S_1})\in C_i, \tilde{\tau}_1<\infty). \notag
\end{align}

Now, for $t>3$ and any fixed tree $\Ts$ we have
\begin{align*}
\Pt_\lambda^{\Ts}(d(Z_{S_1})>t, \tilde{\tau}_0<\infty) \leq \sum_{x}\Pt_\lambda^{\Ts}(Z_1=x)\Pt_{\lambda,x}^{\Ts}(\sigma_x<\infty, d(Z_n)=t-1 \text{ for some } n<\sigma_x)
\end{align*}
where the sum is over $x$ such that $\pi(x)=e$. Averaging then yields
\begin{align*}
\Pt_\lambda(d(Z_{S_1})>t, \tilde{\tau}_0<\infty) & = \Eb\left[\Pt_\lambda^{\Tr}(d(Z_{S_1})>t, \tilde{\tau}_0<\infty)\right] \\
&\leq\Eb\!\left[\sum_{x}\frac{1}{\nu(e)}\Pt_{\lambda,x}^{\Tr}(\sigma_x<\infty, d(Z_n)=t-1 \text{ for some } n<\sigma_x)\right]\\
&=\Eb\!\left[\sum_{x}\frac{1}{\nu(e)}\Eb\left[\Pt_{\lambda,x}^{\Tr}(\sigma_x<\infty, d(Z_n)=t-1 \text{ for some } n<\sigma_x)|\nu(e)\right]\right]\\
& = \Pt_\lambda(d(Z_{S_2})-d(Z_{S_1})\geq t, \tilde{\tau}_1<\infty).
\end{align*}
Combining this with \eqref{e:posSum} and \eqref{e:indSec} we have that it suffices to show that
\[\sup_{\lambda\in[a,b]}\Pt_\lambda(d(Z_{S_2})-d(Z_{S_1})\geq t, \tilde{\tau}_1<\infty) \leq \tilde{C}e^{-\tilde{\varsigma} t}.\]
Where we note that 
\begin{align*}
\Pt_\lambda(d(Z_{S_2})-d(Z_{S_1})\geq t, \tilde{\tau}_1<\infty) 
&\leq \Pt_\lambda(M_0\geq t, \sigma_e<\infty) \\
&\leq \Pt_\lambda(d(Z_m)=t,d(Z_n)=0 \text{ for some } n>m).
\end{align*}

For a tree $\Ts$, a constant $\delta>0$ and a vertex $x\in\nu(e)$ write 
\[\Ac^\delta_\lambda(\Ts,x):=\ind_{\{\nu(e)\geq 2\}}\ind_{\{\Pt_\lambda^{\Ts\setminus\Ts_x}(\sigma_e=\infty)\geq \delta\}}\]
for the indicator that the root has at least two children and, removing the subtree rooted at $x$, the walk has positive probability of regenerating immediately. Note that since $\Pt_\lambda^{\Ts\setminus\Ts_x}(\sigma_e=\infty)$ is decreasing in $\lambda$ we have that $\Ac^\delta_\lambda(\Ts,x)$ is decreasing in $\lambda$. 

For $\beta>0$ and $z$ in the $n\th$ generation of the tree, we call $z\in\Ts$ $(\lambda,\beta)$-successful if the shortest path connecting $e$ with $z$ has at least $\beta n$ vertices $v_i$ satisfying $\Ac^\delta_\lambda(\Ts_{\pi(v_i)},v_i)=1$. Then, write $\Bc_\lambda^\delta(n,\beta):=\{\exists z \in n\th \text{generation that is not } (\lambda,\beta)-\text{successful}\}$. In particular, if $z$ is $(b,\beta)$-successful then it is $(\lambda,\beta)$-successful for all $\lambda \in [a,b]$ and therefore $\Bc_\lambda^\delta(n,\beta)\subseteq\Bc_b^\delta(n,\beta)$ for all $\lambda \in[a,b]$.

Upon first visiting a vertex $v_i$ (on the unique path connecting $e$ with $z$) which satisfies $\Ac^\delta_\lambda(\Ts_{\pi(v_i)},v_i)=1$, the walk has probability at least 
\[\frac{\delta(\nu(\pi(v_i))-1)}{\nu(\pi(v_i))+\lambda}\geq \frac{\delta}{2+b}\]
of leaving the path connecting $e,z$ and never returning. In particular, for $\Ts\notin\Bc_b^\delta(t,\beta)$,
\[\Pt_\lambda^{\Ts}(d(Z_m)=t,d(Z_n)=0 \text{ for some } n>m)\leq \left(1-\frac{\delta}{2+b}\right)^{\beta t}\]
for all $\lambda \in[a,b]$.
Furthermore, by Lemma 2.2 of \cite{DGPZ}, $\Pb(\Bc_b^\delta(t,\beta))$ decays exponentially in $t$ so we are done.
\fi

We now proceed to the main result of this section. This follows similarly to Proposition 3 in \cite{PZ} however, we include the proof since the extension to uniformity over $\lambda$ is delicate.
% and thus we defer the proof to Appendix \ref{s:App}.
\begin{prp}\label{p:UnMo}
Suppose $p_0=0$, $b\in[1,\mu)$ and that there exists $\beta>1$ such that $\sum_{k\geq 1}p_k\beta^k<\infty$. 
For all $u\in\Nb$ and $\lambda\in[1,\mu)$ there exists $\varepsilon>0$ such that 
\[\sup_{\lambda \in[1-\varepsilon,b]}\Et_{\lambda}\left[\left(\tau_2-\tau_1\right)^u\right]<\infty.\]
\end{prp}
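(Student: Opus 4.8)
The plan is to follow the excursion decomposition of the proof of Proposition~3 in \cite{PZ}, keeping track of the $\lambda$-dependence of every estimate. Write $\rho_1:=d(Z_{\tau_1})$ for the first regeneration distance. By the renewal structure of Section~\ref{s:ren}, $\tau_2-\tau_1$ under $\Pt_\lambda$ has the law of $\tau_1$ under $P_\lambda^{\tt NB}$, so it suffices to bound $\sup_{\lambda\in[1-\varepsilon,b]}E_\lambda^{\tt NB}[\tau_1^u]$. A regeneration time is always a strict record time for $d(Z_\cdot)$, whence $\tau_1=\Delta_{\rho_1}$. Moreover, for any event $A$,
\[P_\lambda^{\tt NB}(A)=\frac{\Eb\bigl[P_{\lambda,e}^{\Tr^*}(A\cap\{\sigma_{e^*}=\infty\})\bigr]}{\Eb\bigl[P_{\lambda,e}^{\Tr^*}(\sigma_{e^*}=\infty)\bigr]}\le \frac{\Eb\bigl[P_{\lambda,e}^{\Tr^*}(A)\bigr]}{\Eb\bigl[P_{b,e}^{\Tr^*}(\sigma_{e^*}=\infty)\bigr]},\]
where the denominator on the right is a positive constant independent of $\lambda$: it is positive because $b<\mu$ makes the walk transient, and it lower-bounds $\Eb[P_{\lambda,e}^{\Tr^*}(\sigma_{e^*}=\infty)]$ for all $\lambda\le b$ since this escape probability is decreasing in $\lambda$ by Remark~\ref{r:Rayleigh}. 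Hence, using Lemma~\ref{l:empMom} to get $\sup_{\lambda\in[1-\varepsilon,b]}P_\lambda^{\tt NB}(\rho_1=n)\le Ce^{-\theta n}$ (valid once $\varepsilon<1$), Cauchy--Schwarz gives
\[E_\lambda^{\tt NB}[\tau_1^u]=\sum_{n\ge1}E_\lambda^{\tt NB}\bigl[\Delta_n^u\ind_{\{\rho_1=n\}}\bigr]\le \sum_{n\ge1}E_\lambda^{\tt NB}\bigl[\Delta_n^{2u}\bigr]^{1/2}P_\lambda^{\tt NB}(\rho_1=n)^{1/2},\]
so it is enough to establish the polynomial bound $\sup_{\lambda\in[1-\varepsilon,b]}\Eb\bigl[E_{\lambda,e}^{\Tr^*}[\Delta_n^{2u}\ind_{\{\sigma_{e^*}=\infty\}}]\bigr]\le C_un^{2u}$; together with the above domination this yields $\sup_{\lambda\in[1-\varepsilon,b]}E_\lambda^{\tt NB}[\Delta_n^{2u}]^{1/2}\le C_u'n^u$ and the series converges.

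For the polynomial hitting-time bound, decompose $\Delta_n=\sum_{k=0}^{n-1}(\Delta_{k+1}-\Delta_k)$; by Minkowski's inequality this reduces matters to bounding the time to cross a single level, $\Delta_{k+1}-\Delta_k$, in $2u$-th moment uniformly in $k$ and in $\lambda\in[1-\varepsilon,b]$. Starting from the first vertex $v$ reached at level $k$, this time is dominated by a geometric number of excursions of the walk away from level $k$ --- toward the root, and into the finite pieces hanging off the levels below $k+1$ --- together with their durations. Quenched, the geometric parameter is the escape probability of the walk from $v$ into the subtree rooted at $v$, whose annealed mean is bounded away from $0$ because $\mu>b$ (again via the monotonicity of Remark~\ref{r:Rayleigh} and the fact that the subtree below a freshly-visited vertex is again a Galton--Watson tree), while the excursion durations have a tail which, uniformly over $\lambda\in[1,b]$, is controlled using the uniform speed lower bound (Lemma~\ref{l:SpBnd}) together with the exponential-moment hypothesis $\sum_k p_k\beta^k<\infty$ --- the latter ruling out atypically large generations near level $k$, which is precisely the estimate developed in Section~\ref{s:mom}. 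For $\lambda\in[1-\varepsilon,1]$ no separate argument is needed: by the discrete Girsanov bound of Lemma~\ref{l:Gir}, applied within a single excursion away from the root,
\[\Pt_\lambda^{\Ts}(\Delta_n>m,\sigma_e>m)\le e^{n(1-\lambda)}\Pt_1^{\Ts}(\Delta_n>m,\sigma_e>m)\le e^{n\varepsilon}\Pt_1^{\Ts}(\Delta_n>m,\sigma_e>m),\]
so any tail bound at $\lambda=1$ transfers to $[1-\varepsilon,1]$ at the cost of a harmless factor $e^{n\varepsilon}$, which is absorbed by the $e^{-\theta n}$ decay of $P_\lambda^{\tt NB}(\rho_1=n)$ from the first paragraph as soon as $\varepsilon<\theta$.

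The main obstacle is the uniform excursion-length estimate on the range $\lambda\in[1,b]$. For $\lambda<1$ one can simply compare excursions with those of a $\lambda$-biased walk on $\Zb$ (as in \cite{depeze96}); for $\lambda\ge1$ that walk is recurrent and the comparison is worthless, and the Girsanov monotonicity of Lemma~\ref{l:Gir} only runs downward from $\lambda=1$, so it gives nothing inside $[1,b]$. One is therefore forced to exploit the branching of the tree directly: although for $\lambda\ge1$ the walk is locally biased toward the root, the hypothesis $\mu>\lambda$ ensures that a typical vertex has enough children that an excursion toward the root is cut short --- the walk escapes outward before travelling far --- while the offspring exponential moment prevents rare highly-branching vertices from creating anomalously long excursions. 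Making this quantitative with constants that are continuous in $\lambda$ and integrable over the tree uniformly on $[1,b]$ is the delicate point; once it is in place, gluing the compact estimate on $[1,b]$ with the Girsanov extension to $[1-\varepsilon,1]$ completes the proof, after taking $\varepsilon$ small enough that $1-\varepsilon>0$ and $\varepsilon<\theta$.
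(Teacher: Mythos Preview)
Your outer scaffold --- reducing to $E_\lambda^{\tt NB}[\tau_1^u]$, decomposing over $\rho_1=n$ with $\tau_1=\Delta_n$, applying Cauchy--Schwarz against the exponential tail of $\rho_1$ from Lemma~\ref{l:empMom}, and extending from $[1,b]$ down to $[1-\varepsilon,b]$ via the Girsanov comparison of Lemma~\ref{l:Gir} at cost $e^{\varepsilon n}$ absorbed by $e^{-\theta n}$ --- is exactly what the paper does. The divergence, and the gap, is in how you bound $\Eb[E_\lambda^{\Tr^*}[\Delta_n^{2u};\sigma_{e^*}=\infty]]$ for $\lambda\in[1,b]$.

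Your level-by-level Minkowski reduction to uniform moments of $\Delta_{k+1}-\Delta_k$ is not carried through, and the justification you offer is misdirected. Since the proposition assumes $p_0=0$, there are \emph{no} ``finite pieces hanging off the levels below $k+1$'': every subtree is infinite, so the excursions you must control are genuine excursions of a $\lambda$-biased walk (with $\lambda\ge1$, i.e.\ drift toward the root) into the infinite tree between levels $1$ and $k$. Section~\ref{s:mom} is entirely about subcritical trap trees and is irrelevant here. Moreover, knowing that the annealed escape probability from $v$ is bounded away from $0$ does not by itself yield moment bounds for the geometric count of returns --- you would need integrability of $(1-p_v)^{-2u}$, i.e.\ control of the quenched lower tail of the escape probability, which you do not address. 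And Lemma~\ref{l:SpBnd} gives a global speed, not a local excursion-length bound; turning one into the other uniformly in $k$ is precisely the work you have not done.

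The paper avoids the level-by-level decomposition altogether. It bounds the tail $\Eb[P_\lambda^{\Tr^*}(\Delta_n>kn^{10},\,\sigma_{e^*}>kn^{10})]$ directly: either the walk visits a vertex of abnormally large degree (controlled by the exponential-moment hypothesis), or it visits fewer than $(kn^{10})^{1/2}$ distinct vertices without hitting $e^*$ (ruled out by a Gambler's-ruin estimate, since some vertex within distance $n$ of the root is revisited $(kn^{10})^{1/2}$ times), or it visits at least $k^{1/2}n^3$ fresh vertices with time gaps of at least $n^2$. From each fresh vertex, independently of the past conditioned on regenerating there, the walk reaches level $n$ within $n^2$ steps with probability bounded below uniformly in $\lambda\in[1,b]$ --- this last point is where Lemma~\ref{l:SpBnd} and Lemma~\ref{l:empMom} are actually used, via Markov's inequality on $\Delta_n$ under $P_\lambda^{\tt NB}$. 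The product of $k^{1/2}n^3$ such failures gives the required decay in $k$, summable against $(k+1)^{2u}$.
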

\begin{proof}%[Proof of Proposition \ref{p:UnMo}]
%\textcolor{red}{\bf Changed Notation!}
%First note that by Remark \ref{r:Rayleigh}
%\[\sup_{\lambda \leq b}\Pt_{\lambda}(\sigma_e=\infty)\geq \Pt_{b}(\sigma_e=\infty)>0\]
%therefore, since 
First note that 
\[\Et_{\lambda}[(\tau_2-\tau_1)^u]=E_{\lambda}^{\tt NB}[\tau_1^u]%\Et_{\lambda}[\tau_1^u|\sigma_e=\infty]
=\frac{\Eb\left[E^{\Tr^*}_{\lambda}[\tau_1^u\ind_{\{\sigma_{e^*}=\infty\}}]\right]}{\Eb\left[P^{\Tr^*}_{\lambda}(\sigma_{e^*}=\infty)\right]}.\]
Since the denominator $\Eb\left[P^{\Tr^*}_{\lambda}(\sigma_{e^*}=\infty)\right]$ is monotonic in $\lambda$ by
Remark \ref{r:Rayleigh},
it suffices to consider $\sup_{\lambda \in[1-\varepsilon,b]}\Eb\left[E^{\Tr^*}_{\lambda}\left[\tau_1^u\ind_{\{\sigma_{e^*}=\infty\}}\right]\right]$.
Using the uniform exponential moment bound \eqref{e:empMom}, the Cauchy-Schwarz inequality and integration by parts we have
\begin{align}\label{est:long}
& \Eb\left[E^{\Tr^*}_{\lambda}\left[\tau_1^u\ind_{\{\sigma_{e^*}=\infty\}}\right]\right]\\
& = \sum_{n=1}^\infty \Eb\left[E^{\Tr^*}_{\lambda}\left[\tau_1^u;\sigma_{e^*}=\infty,d(Z_{\tau_1})=n\right]\right] \notag\\
& = \sum_{n=1}^\infty \Eb\left[E^{\Tr^*}_{\lambda}\left[\Delta_n^u;\sigma_{e^*}=\infty,d(Z_{\tau_1})=n\right]\right] \notag\\
& \leq \sum_{n=1}^\infty \Eb\left[E^{\Tr^*}_{\lambda}\left[\Delta_n^{2u};\sigma_{e^*}=\infty\right]\right]^{1/2}\Eb\left[P^{\Tr^*}_{\lambda}(d(Z_{\tau_1})=n)\right]^{1/2} \notag\\
& \leq \Eb\left[E^{\Tr^*}_{\lambda}\left[e^{\theta d(Z_{\tau_1})}\right]\right]\sum_{n=1}^\infty e^{-\theta n} \Eb\left[E^{\Tr^*}_{\lambda}\left[\Delta_n^{2u};\sigma_{e^*}=\infty\right]\right]^{1/2} \notag\\
& \leq \Eb\!\left[E^{\Tr^*}_{\lambda}\!\left[e^{\theta d(Z_{\tau_1})}\right]\right]\!\sum_{n=1}^\infty e^{-\theta n}n^{10u}\!
\left(\sum_{k=0}^\infty (k+1)^{2u} \Eb\!\left[P^{\Tr^*}_{\lambda}\!(\Delta_n>kn^{10}, \sigma_{e^*}=\infty)\right]\!\right)^{\!1/2}\!. \notag
\end{align}
 %By \cite[(75)]{DGPZ}, we have
 %\begin{align*}\Et_{\lambda}[\tau_1^u\ind_{\{\sigma_e=\infty\}}]  & \leq \Et_\lambda[e^{\theta d(Z_{\tau_1})}]\sum_{n=1}^\infty e^{-\theta n}n^{10u} \left(\sum_{k=0}^\infty (k+1)^{2u} \Pt_{\lambda}(\Delta_n>kn^{10}, \sigma_e=\infty)\right)^{1/2}. \end{align*}
% \textcolor{red}{KOKOMADEYATTA.}
 \if0 \begin{align}\label{est:long}
%\Et_{\lambda}[\tau_1^u\ind_{\{\sigma_e=\infty\}}] 
& = \sum_{n=1}^\infty \Et_{\lambda}[\tau_1^u;\sigma_e=\infty,d(Z_{\tau_1})=n] \notag\\
& = \sum_{n=1}^\infty \Et_{\lambda}[\Delta_n^u;\sigma_e=\infty,d(Z_{\tau_1})=n] \notag\\
& \leq \sum_{n=1}^\infty \Et_{\lambda}[\Delta_n^{2u};\sigma_e=\infty]^{1/2}\Pt_\lambda(d(Z_{\tau_1})=n)^{1/2} \notag\\
& \leq \Et_\lambda[e^{\theta d(Z_{\tau_1})}]\sum_{n=1}^\infty e^{-\theta n} \Et_{\lambda}[\Delta_n^{2u};\sigma_e=\infty]^{1/2} \notag\\
& \leq \Et_\lambda[e^{\theta d(Z_{\tau_1})}]\sum_{n=1}^\infty e^{-\theta n}n^{10u}
\left(\sum_{k=0}^\infty (k+1)^{2u} \Pt_{\lambda}(\Delta_n>kn^{10}, \sigma_e=\infty)\right)^{1/2}.
\end{align}
Using the uniform exponential moment bound \eqref{e:empMom}, the Cauchy-Schwarz inequality and integration by parts we have
\begin{align*}
\Et_{\lambda}[\tau_1^u\ind_{\{\sigma_e=\infty\}}] 
& = \sum_{n=1}^\infty \Et_{\lambda}[\tau_1^u;\sigma_e=\infty,d(Z_{\tau_1})=n] \\
& = \sum_{n=1}^\infty \Et_{\lambda}[\Delta_n^u;\sigma_e=\infty,d(Z_{\tau_1})=n] \\
& = \sum_{n=1}^\infty \Et_{\lambda}[\Delta_n^{2u};\sigma_e=\infty]^{1/2}\Pt_\lambda(d(Z_{\tau_1})=n)^{1/2} \\
& \leq \Et_\lambda[e^{\theta d(Z_{\tau_1})}]\sum_{n=1}^\infty e^{-\theta n} \Et_{\lambda}[\Delta_n^{2u};\sigma_e=\infty]^{1/2} \\
& \leq \Et_\lambda[e^{\theta d(Z_{\tau_1})}]\sum_{n=1}^\infty e^{-\theta n}n^{10u}
\sum_{k=0}^\infty (k+1)^{2u} \Pt_{\lambda}(\Delta_n>kn^{10}, \sigma_e=\infty).
%\end{align*}
\fi
By Lemma \ref{l:Gir} we have that for $\varepsilon>0$ suitably small
\begin{align*}
\sup_{h\in(0,\varepsilon)}\Eb\left[P^{\Tr^*}_{1-h}(\Delta_n>kn^{10}, \sigma_{e^*}=\infty)\right]
& \leq\sup_{h\in(0,\varepsilon)}\Eb\left[P^{\Tr^*}_{1-h}(\Delta_n>kn^{10}, \sigma_{e^*}>kn^{10})\right]\\
& \leq e^{\varepsilon n}\Eb\left[P^{\Tr^*}_{1}(\Delta_n>kn^{10}, \sigma_{e^*}>kn^{10})\right].
\end{align*}
%\begin{align*}\sup_{h\in(0,\varepsilon)}\Pt_{1-h}(\Delta_n>kn^{10}, \sigma_e=\infty)& \leq\sup_{h\in(0,\varepsilon)}\Pt_{1-h}(\Delta_n>kn^{10}, \sigma_e>kn^{10})\\& \leq e^{\varepsilon n}\Pt_{1}(\Delta_n>kn^{10}, \sigma_e>kn^{10}).\end{align*}
Choosing $\varepsilon<\theta/2$ and using \eqref{e:empMom}, it suffices to show that 
\begin{align}\label{est:suff}
\sup_{\lambda\in[1,b]}\sum_{n=1}^\infty e^{-\theta n/2}n^{10u}\left(\sum_{k=0}^\infty (k+1)^{2u} \Eb\left[P^{\Tr^*}_{\lambda}(\Delta_n>kn^{10}, \sigma_{e^*}>kn^{10})\right]\right)^{1/2}<\infty.
\end{align}
 For $k\geq 1$, let
\[\Ac_{1,k,n}:=\bigcup_{m\leq kn^{10}}\{|\nu(Z_m)|\geq \log(kn^{10})^2\}\]
be the event that the walk visits a vertex with at least $\log(kn^{10})^2$ offspring by time $kn^{10}$. By the exponential moments assumption we have that for all $n$ large 
\[%\Pt_\lambda(\Ac_{1,k,n})\leq kn^{10}\Pb(|\nu(e)|\geq \log(kn^{10})^2)\leq e^{-c\log(n^{10})^2}e^{-c\log(k)^2}
\Eb\left[P_{\lambda}^{\Tr^*}(\Ac_{1,k,n})\right]\leq kn^{10}\Pb(|\nu(e)|\geq \log(kn^{10})^2)\leq e^{-c\log(n^{10})^2}e^{-c\log(k)^2}\]
for some constant $c$ depending only on $\beta$.

Let $N_{k,n}:=|\{m\leq kn^{10}: Z_l\neq Z_m \forall l<m\}|$ be the number of distinct vertices visited by time $kn^{10}$. Set
\[\Ac_{2,k,n}:=\left\{N_{k,n}<\sqrt{kn^{10}}\right\}\cap\left\{\sigma_e>kn^{10}\right\}\]
to be the event that, up to time $kn^{10}$, the walk visits at most $(kn^{10})^{1/2}$ distinct vertices and does not return to the root $e^*$.  On the event $\Ac_{2,k,n}\cap \Ac_{1,k,n}^c$ there is a time $m\leq kn^{10}$ and a vertex $v$ with degree at most $\log(kn^{10})^2$ such that $Z_m=v$ and $v$ is subsequently visited at least $(kn^{10})^{1/2}$ times without a visit to the root. By the Gambler's ruin, for a walk started at $v$ of distance at most $n$ from the root, the probability that the walk returns to $v$ before reaching the root is at most 
$1-1/(2n\log(kn^{10})^2)$ uniformly in $k, m, v$ and $\lambda\geq 1$. It follows that the probability that $v$ is visited by the the walk $(kn^{10})^{1/2}$ times without a visit to the root is at most 
\[\left(1-\frac{1}{2n\log(kn^{10})^2}\right)^{\sqrt{kn^{10}}}.\]

It follows that for $n$ suitably large (independently of $k\geq 1$)
\begin{align*}
\Eb\left[P^{\Tr^*}_\lambda(\Ac_{2,k,n})\right] 
& \leq \Eb\left[P^{\Tr^*}_\lambda(\Ac_{1,k,n})\right] +kn^{10}\left(1-\frac{1}{2n\log(kn^{10})^2}\right)^{\sqrt{kn^{10}}} \\
& \leq 2e^{-c\log(n^{10})^2}e^{-c\log(k)^2}. 
%\Pt_\lambda(\Ac_{2,k,n}) 
%& \leq \Pt_\lambda(\Ac_{1,k,n}) +kn^{10}\left(1-\frac{1}{2n\log(kn^{10})^2}\right)^{\sqrt{kn^{10}}} \\
%& \leq 2e^{-c\log(n^{10})^2}e^{-c\log(k)^2}. 
\end{align*}

On the event $\Ac_{2,k,n}^c\cap\{\sigma_e>kn^{10}\}$ there are at least $k^{1/2}n^3$ vertices which are visited by the walk before time $kn^{10}$ with at least time $n^2$ between the first hitting times. Write $\psi_1:=\min\{m>0:Z_l\neq Z_m \forall l<m\}$ and, for $i\geq 2$, 
\[\psi_i:=\min\{m>\psi_{i-1}+n^2:Z_l\neq Z_m \forall l<m\}.\] 
Then, let 
\[\Gc_j=\bigcap_{i=1}^j\left\{\max_{m\leq n^2}|d(Z_{\psi_i})-d(Z_{\psi_i+m})|<n\right\}.\]
We have that
\begin{align}\label{e:awy}
&\Eb\left[P^{\Tr^*}_\lambda(\Delta_n>kn^{10}, \sigma_e>kn^{10},\Ac_{2,k,n}^c)\right] \\
& \leq \Eb\left[P^{\Tr^*}_\lambda\left(\bigcap_{i=1}^{k^{1/2}n^3}\left\{\max_{m\leq n^2}|d(Z_{\psi_i})-d(Z_{\psi_i+m})|<n\right\}\right)\right]\notag \\
& = \prod_{i=1}^{k^{1/2}n^3} \Eb\left[P^{\Tr^*}_\lambda\left(\max_{m\leq n^2}|d(Z_{\psi_i})-d(Z_{\psi_i+m})|<n\big|\Gc_{i-1}\right)\right] \notag\\
&= \prod_{i=1}^{k^{1/2}n^3} \left(1-\Eb\left[P^{\Tr^*}_\lambda\left(\max_{m\leq n^2}|d(Z_{\psi_i})-d(Z_{\psi_i+m})|\geq n\big|\Gc_{i-1}\right)\right]\right).\notag
\end{align}
If the walk regenerates at time $\psi_i$ then $(Z_m)_{m\geq \psi_i}$ is independent of $\Gc_{i-1}$ (conditionally on $Z_{\psi_i}$) therefore \eqref{e:awy} is bounded above by
\begin{align*}
& \prod_{i=1}^{k^{1/2}n^3} \left(1-\Eb\left[P^{\Tr^*}_\lambda\left(\max_{m\leq n^2}|d(Z_{\psi_i})-d(Z_{\psi_i+m})|\geq n, d(Z_m)\geq d(Z_{\psi_i}) \forall m\geq \psi_i\right)\right]\right) \\
&= \prod_{i=1}^{k^{1/2}n^3} \left(1-\Eb\left[P^{\Tr^*}_\lambda\left(\Delta_n<n^2, \sigma_{e^*}=\infty\right)\right]\right) \\
& = \prod_{i=1}^{k^{1/2}n^3} \left(1-\Eb\left[P^{\Tr^*}_\lambda\left(\sigma_e=\infty\right)\right]P_{\lambda}^{\tt NB}\left(\Delta_n<n^2\right)\right).
%&\Pt_\lambda(\Delta_n>kn^{10}, \sigma_e>kn^{10},\Ac_{2,k,n}^c) \\
%& \leq \Pt_\lambda\left(\bigcap_{i=1}^{k^{1/2}n^3}\left\{\max_{m\leq n^2}|d(Z_{\psi_i})-d(Z_{\psi_i+m})|<n\right\}\right) \\
%& = \prod_{i=1}^{k^{1/2}n^3} \Pt_\lambda\left(\max_{m\leq n^2}|d(Z_{\psi_i})-d(Z_{\psi_i+m})|<n\big|\Gc_{i-1}\right) \\
%&= \prod_{i=1}^{k^{1/2}n^3} \left(1-\Pt_\lambda\left(\max_{m\leq n^2}|d(Z_{\psi_i})-d(Z_{\psi_i+m})|\geq n\big|\Gc_{i-1}\right)\right) \\
%& \leq \prod_{i=1}^{k^{1/2}n^3} \left(1-\Pt_\lambda\left(\max_{m\leq n^2}|d(Z_{\psi_i})-d(Z_{\psi_i+m})|\geq n, d(Z_m)\geq d(Z_{\psi_i}) \forall m\geq \psi_i\right)\right) \\
%&= \prod_{i=1}^{k^{1/2}n^3} \left(1-\Pt_\lambda\left(\Delta_n<n^2, \sigma_e=\infty\right)\right) \\
%& = \prod_{i=1}^{k^{1/2}n^3} \left(1-\Pt_\lambda\left(\sigma_e=\infty\right)\Pt_\lambda\left(\Delta_n<n^2| \sigma_e=\infty\right)\right)
\end{align*}

We have seen that $\Pt_\lambda\left(\sigma_e=\infty\right)$ is bounded away from $0$ for $\lambda \in[1,b]$ therefore it remains to show that, for $n$ large, %$\Pt_\lambda\left(\Delta_n<n^2| \sigma_e=\infty\right)$ 
$P_{\lambda}^{\tt NB}\left(\Delta_n<n^2\right)$ is bounded away from $0$ uniformly in $\lambda\in[1,b]$. By Markov's inequality
\begin{align*}
P_{\lambda}^{\tt NB}\left(\Delta_n\geq n^2\right)
 \leq \frac{E^{\tt NB}_\lambda[\Delta_n]}{n^2}
 \leq \frac{E^{\tt NB}_\lambda[\tau_1]}{n}
 \leq \frac{E^{\tt NB}_\lambda[d(Z_{\tau_1})]}{\upsilon_\lambda n}
%\Pt_\lambda\left(\Delta_n\geq n^2| \sigma_e=\infty\right)
%& \leq \frac{\Et_\lambda[\Delta_n|\sigma_e=\infty]}{n^2}\\
%& \leq \frac{\Et_\lambda[\tau_2-\tau_1]}{n}\\
%& \leq \frac{\Et_\lambda[d(Z_{\tau_2})-d(Z_{\tau_1})]}{\upsilon_\lambda n}
\end{align*}
where we have used that there are at most $n$ regenerations up to level $n$ and the formula of the speed \eqref{speed}. By Lemmas \ref{l:SpBnd} and \ref{l:empMom} we then have that this converges to $0$ (uniformly in $\lambda$) as $\nin$ which completes the proof.
\end{proof}

%\textcolor{red}{\bf Added Proof!}
We now prove the following lemma which claims stronger estimates than Proposition \ref{est:sigma} under the assumption that $p_0=0$.
\begin{lem}\label{lem:est:sigma}
Suppose that $p_0=0$ and that $\sum_{k\geq1}p_k\beta^k<\infty$ for some $\beta>1$.
Then for any $\lambda\in(0,\mu)$ and any $\in\mathbb{N}$, we have
\[\Eb\left[E^{\mathbf{T}^*}_{\lambda}\left[(\sigma_{e^*})^l\ind_{\{\sigma_{e^*}<\infty\}}\right]\right]<\infty.\]
\end{lem}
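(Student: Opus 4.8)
The plan is to reduce the $l$-th moment of $\sigma_{e^*}$ on the escape-free event to the return time of an excursion from $e^*$ back to $e^*$ that stays in the tree forever (i.e.\ the event $\{\sigma_{e^*}=\infty\}$ is complemented here, so we are really bounding $E^{\mathbf{T}^*}_\lambda[(\sigma_{e^*})^l; \sigma_{e^*}<\infty]$, the contribution of excursions that do return). The key observation is that conditionally on $\Tr$, under $P^{\mathbf{T}^*}_{\lambda}$ the walk started at $e(\Tr)$ makes a geometrically distributed number of visits to $e(\Tr)$ before escaping, and between consecutive visits it performs an excursion into one of the subtrees $\Tr(x)$, $x$ a child of $e$, that returns to $e$. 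Since $p_0=0$, the uniform moment bounds of Proposition \ref{p:UnMo} (equivalently \eqref{e:uMom}) give control on $\Et_\lambda[(\tau_2-\tau_1)^u]$ for every $u\in\Nb$, and in particular on the length of such return excursions.

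First I would write $\sigma_{e^*}=\sum_{j=1}^{\sf RE}\zeta_j$, where ${\sf RE}$ is the number of excursions from $e$ that return to $e$ before escaping (so ${\sf RE}$ is geometric with parameter the quenched escape probability $p(\Tr):=P^{\mathbf{T}^*}_{\lambda,e(\Tr)}(\sigma_{e^*}=\infty)$ — precisely, $P^{\mathbf{T}^*}_\lambda({\sf RE}=m)=p(\Tr)(1-p(\Tr))^m$) and the $\zeta_j$ are the lengths of those returning excursions (including the final step back to $e^*$ when the walk actually leaves). Conditionally on $\Tr$ and on ${\sf RE}$, each $\zeta_j$ is distributed as the length of an excursion into a random child subtree conditioned to return; by the power-mean / Jensen inequality, $(\sigma_{e^*})^l\le ({\sf RE})^{l-1}\sum_{j=1}^{\sf RE}\zeta_j^l$, so it suffices to bound $\Eb[E^{\mathbf{T}^*}_\lambda[({\sf RE})^{l-1}\sum_{j}\zeta_j^l]]$. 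Conditioning on $\Tr$ and summing the geometric series, this is at most $\Eb[ C_l\, p(\Tr)^{-l}\cdot \sup_{x:\pi(x)=e} E^{\Tr(x)^*}_{\lambda,x}[\zeta^l; \zeta<\infty]]$ for a combinatorial constant $C_l$, where $\zeta$ is a single returning excursion length. The quantity $E^{\Tr(x)^*}_{\lambda,x}[\zeta^l;\zeta<\infty]$ is bounded by a deterministic constant times a polynomial in the offspring numbers along the excursion; crucially, by the renewal/regeneration structure of Proposition \ref{p:UnMo} (the returning-excursion length is dominated by a difference of regeneration times, which has all moments uniformly in $\lambda$ on $[1,b]$, and for $\lambda<1$ one compares with a biased walk on $\Zb$ as in the excerpt), its $\Pb$-average is finite. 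What remains is to control the negative moment $\Eb[p(\Tr)^{-l}]$: this is where one must work. Here I would use that $p(\Tr)\ge A_\lambda(e,x_1)\,P^{\Tr(x_1)^*}_{\lambda,x_1}(\sigma_e=\infty)$ for any fixed child $x_1$, hence $p(\Tr)\ge (\nu(e)+\lambda)^{-1}\max_{i}q_i$ where $q_i$ are i.i.d.\ copies of $\Pt_\lambda^\Tr(\sigma_e=\infty)$ independent of $\nu(e)$; so $p(\Tr)^{-l}\le (\nu(e)+\lambda)^l (\max_i q_i)^{-l}$, and using $\nu(e)$ has exponential moments it reduces to showing $\Eb[(\max_{1\le i\le \nu(e)}q_i)^{-l}]<\infty$, which follows from the fact that $\Pb(q_1>\delta)>0$ for some $\delta>0$ (so that the probability all of $\nu(e)$-many independent copies are $\le\delta$ decays geometrically in $\nu(e)$, beating the exponential tail of $\nu(e)$) together with the crude bound $q_i^{-l}\le \delta^{-l}$ on $\{q_i>\delta\}$.

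The main obstacle I anticipate is precisely the negative-moment bound $\Eb[p(\Tr)^{-l}]<\infty$: a priori $p(\Tr)$ can be arbitrarily small (on trees where the root sits at the top of a long thin path, as in Remark \ref{rem:tau1}, but note that such configurations require $p_1>0$ and do not force $p(\Tr)$ all the way to $0$ unless the path is infinite, which has probability zero), so one genuinely needs the quantitative statement that $p(\Tr)\ge$ a positive random variable with sufficiently light left tail. The argument sketched above — bounding $p(\Tr)$ below by one child's escape probability divided by the root degree, then exploiting that the best of $\nu(e)$ i.i.d.\ escape probabilities is bounded below by a constant with probability tending to $1$ exponentially fast in $\nu(e)$ — should suffice, but getting the interaction between the exponential tail of $\nu(e)$ and the geometric decay clean is the delicate point. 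Everything else (the power-mean split, the geometric summation over ${\sf RE}$, and the finiteness of excursion-length moments) is routine given Proposition \ref{p:UnMo}, and the uniformity in $\lambda$ is not even needed for this lemma since $\lambda\in(0,\mu)$ is fixed.
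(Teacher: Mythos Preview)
There is a genuine gap. Your plan hinges on the negative-moment bound $\Eb[p(\Tr)^{-l}]<\infty$, and this is exactly what fails: Remark~\ref{rem:tau1} shows that when $p_1>0$ and $\lambda>1$, the effective resistance $R^\lambda(\Tr)$ satisfies $\Pb(R^\lambda(\Tr)>n)\ge cn^{\log p_1/\log\lambda}$, and since $p(\Tr)^{-1}=1+R^\lambda(\Tr)$ this gives $\Eb[p(\Tr)^{-l}]=\infty$ for every $l>-\log p_1/\log\lambda$. Your sketched fix via $\max_i q_i$ only handles the event $\{\max_i q_i>\delta\}$; on its complement you have no bound on $(\max_i q_i)^{-l}$, and the heavy left tail of $q_1$ (the same thin-path configurations) makes that contribution diverge. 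The underlying error is the identification of the geometric parameter: on $\{\sigma_{e^*}<\infty\}$ the number of returning excursions from $e$ is governed by termination probability $1-p_{\mathrm{ret}}=p_{e^*}+p_{\mathrm{esc}}\ge \lambda/(\lambda+\nu(e))$, \emph{not} by $p(\Tr)=p_{\mathrm{esc}}/(p_{e^*}+p_{\mathrm{esc}})$. With the correct parameter the negative moment reduces to $\Eb[(1+\nu(e)/\lambda)^l]$, which is fine.

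Even after that repair, your bound on the excursion lengths $\zeta_j$ is circular: a returning excursion from $e$ into a child subtree $\Tr(x_i)$ has, after one step, exactly the law of $\sigma_{e^*}$ on $\{\sigma_{e^*}<\infty\}$ for a fresh GW tree, which is the quantity you are trying to control. The claim that such an excursion is ``dominated by a difference of regeneration times'' is not justified (a returning excursion contains no regeneration time by definition). The paper avoids both issues by the single observation $\sigma_{e^*}\le\tau_1$ on $\{\sigma_{e^*}<\infty\}$: for $\lambda<1$ one then quotes exponential moments of $\tau_1$ from a comparison with biased walk on $\Zb$, for $\lambda=1$ one quotes \cite{piau}, and for $1<\lambda<\mu$ one reruns the chain of inequalities \eqref{est:long}--\eqref{est:suff} from the proof of Proposition~\ref{p:UnMo} with the event $\{\sigma_{e^*}=\infty\}$ replaced by $\{\sigma_{e^*}<\infty\}$, which goes through verbatim since those bounds only used $\{\sigma_{e^*}>kn^{10}\}$.
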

\begin{proof}%[Proof of Proposition \ref{est:sigma} for the case $p_0=0$]
On the event $\{\sigma_{e^*}<\infty\}$, we obviously have that 
\begin{align}\label{est:sig-tau}
\sigma_{e^*}\leq\tau_1\ \ P_{\lambda}\mathchar`-a.s.
\end{align}
The estimate \eqref{est:sig-tau} together with Lemma 5.1 in \cite{depeze96} implies the result for $0<\lambda<1$.
 The case $\lambda=1$ can be shown by using \eqref{est:sig-tau} and Theorem 2 in \cite{piau}.
 
 We will show the claim for $1<\lambda<\mu$. Notice that $\sigma_{e^*}=\sigma_{e^*}\wedge \tau_1$ almost surely on the event $\{\sigma_{e^*}<\infty\}$. Therefore similarly to \eqref{est:long}, we obtain that
 \begin{align*}%\label{est:long2}
&\Eb\left[E^{\mathbf{T}^*}_{\lambda}\left[(\sigma_{e^*})^l\ind_{\{\sigma_{e^*}<\infty\}}\right]\right]\notag\\
& = \sum_{n=1}^\infty \Eb\left[E^{\mathbf{T}^*}_{\lambda}\left[(\sigma_{e^*}\wedge \tau_1)^l;\ \sigma_{e^*}<\infty,d(Z_{\tau_1})=n\right]\right] \notag\\
%& = \sum_{n=1}^\infty \Eb\left[E^{\mathbf{T}^*}_{\lambda}\left[(\sigma_{e^*}\wedge\Delta_n)^k;\ \sigma_{e^*}<\infty,d(Z_{\tau_1})=n\right]\right] \notag\\
%& = \sum_{n=1}^\infty \Eb\left[E^{\mathbf{T}^*}_{\lambda}\left[(\sigma_{e^*}\wedge\Delta_n)^{2k};\ \sigma_{e^*}<\infty\right]\right]^{1/2}\Eb\left(P^{\Tr^*}_\lambda(d(Z_{\tau_1})=n)\right)^{1/2} \notag\\
%& \leq \Eb\left[E^{\Tr^*}_{\lambda}\left[e^{\theta d(Z_{\tau_1})}\right]\right]\sum_{n=1}^\infty e^{-\theta n} \Eb\left[E^{\Tr^*}_{\lambda}\left[(\sigma_{e^*}\wedge\Delta_n)^{2k};\ \sigma_{e^*}<\infty\right]\right]^{1/2} \notag\\
& \leq \Eb\!\left[E^{\Tr^*}_{\lambda}\!\left[e^{\theta d(Z_{\tau_1})}\right]\right]\\
& \qquad\cdot\sum_{n=1}^\infty e^{-\theta n}n^{10k}\left(\sum_{k=0}^\infty (k+1)^{2l} \mathbb{E}\!\left[P^{\mathbf{T}^*}_{\lambda}\!\left(kn^{10}<\sigma_{e^*}<\infty,\ kn^{10}<\Delta_n\right)\right]
%\left(P^{\Tr^*}_{\lambda}((\sigma_{e^*}\wedge\Delta_n)^{2k};\ \sigma_{e^*}<\infty\right]\right]
\!\right)^{1/2}\!.
\end{align*}
This follows similarly to \eqref{est:suff}.
\end{proof}
We conclude this section by proving Proposition \ref{p:esc-conti}. For this, we first show the following lemma.
\begin{lem}\label{l:Betn}Suppose that $p_0=0$.
For any $[a,b]\subset (0,\mu)$
\[\lim_{n\rightarrow\infty}\sup_{\lambda\in[a,b]}\Eb\left[\Pt^{\Tr^*}_\lambda(n<\sigma_{e^*}<\infty)\right]=0.\]
\end{lem}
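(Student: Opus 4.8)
The plan is to reduce the statement, via Markov's inequality, to a bound on $\sigma_{e^{*}}$ that is uniform in $\lambda$, and then obtain that bound by decomposing the walk on $\mathbf{T}^{*}$ at its successive excursions from $e$. Since $\ind_{\{n<\sigma_{e^{*}}<\infty\}}\le n^{-1}\sigma_{e^{*}}\ind_{\{\sigma_{e^{*}}<\infty\}}$, it suffices to prove
\[
\sup_{\lambda\in[a,b]}\Eb\!\left[E^{\mathbf{T}^{*}}_{\lambda}\!\left[\sigma_{e^{*}}\ind_{\{\sigma_{e^{*}}<\infty\}}\right]\right]<\infty,
\]
since then the quantity in the lemma is at most this supremum divided by $n$. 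By Lemma~\ref{lem:est:sigma} the inner expectation is finite for each fixed $\lambda$; the point is the uniformity, and it is genuinely at issue only for $\lambda\ge1$, the case $\lambda\le1$ being amenable to more classical arguments via $\sigma_{e^{*}}\le\tau_{1}$ on $\{\sigma_{e^{*}}<\infty\}$ and comparison with a biased walk on $\Zb$.

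Started from $e$, the walk on $\mathbf{T}^{*}$ performs \IID excursions from $e$, conditionally on $\mathbf{T}$: the first excursion steps directly to $e^{*}$ with probability $p^{*}:=\lambda/(\lambda+\nu(e))$, in which case $\sigma_{e^{*}}=1$; it enters a subtree $\mathbf{T}(x)$ and never returns with probability $q:=P^{\mathbf{T}^{*}}_{\lambda,e}(\sigma_{e}=\infty)$, in which case $\sigma_{e^{*}}=\infty$; otherwise it enters a subtree $\mathbf{T}(x)$ and returns, this excursion being distributed as the walk on $\mathbf{T}(x)^{*}$ from $x$ run up to $\sigma_{e^{*}(\mathbf{T}(x))}$ and conditioned on $\{\sigma_{e^{*}(\mathbf{T}(x))}<\infty\}$, after which the walk restarts at $e$. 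Writing $\phi_{\lambda}(\mathbf{T}):=E^{\mathbf{T}^{*}}_{\lambda,e}[\sigma_{e^{*}}\ind_{\{\sigma_{e^{*}}<\infty\}}]$, $\psi_{\lambda}(\mathbf{T}):=P^{\mathbf{T}^{*}}_{\lambda,e}(\sigma_{e^{*}}<\infty)$ and $U:=\sum_{x:\pi(x)=e}\bigl(1-\psi_{\lambda}(\mathbf{T}(x))\bigr)=\sum_{x:\pi(x)=e}P^{\mathbf{T}(x)^{*}}_{\lambda,x}(\sigma_{e^{*}}=\infty)$, a first-excursion computation together with Wald's identity gives $\psi_{\lambda}(\mathbf{T})=p^{*}/(p^{*}+q)=\lambda/(\lambda+U)$ and
\[
\phi_{\lambda}(\mathbf{T})=\psi_{\lambda}(\mathbf{T})+\frac{\lambda}{(\lambda+U)^{2}}\sum_{x:\pi(x)=e}\bigl(\psi_{\lambda}(\mathbf{T}(x))+\phi_{\lambda}(\mathbf{T}(x))\bigr).
\]

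Taking $\Eb$ (of a suitable majorant of this identity) and using that $\nu(e)$ is independent of the \IID subtrees $(\mathbf{T}(x))_{x}$, one aims to control $A(\lambda):=\Eb[\phi_{\lambda}(\mathbf{T})]$ recursively. The available inputs are: $A(\lambda)<\infty$ for each $\lambda$ by Lemma~\ref{lem:est:sigma} — which also yields $\Eb[\phi_{\lambda}(\mathbf{T})^{2}]\le\Eb[E^{\mathbf{T}^{*}}_{\lambda}[\sigma_{e^{*}}^{2}\ind_{\{\sigma_{e^{*}}<\infty\}}]]<\infty$ by Jensen; the uniform lower bound $\Eb[P^{\mathbf{T}^{*}}_{\lambda}(\sigma_{e^{*}}=\infty)]\ge\Eb[P^{\mathbf{T}^{*}}_{b}(\sigma_{e^{*}}=\infty)]=:c_{b}>0$ for $\lambda\le b$, from Rayleigh monotonicity (Remark~\ref{r:Rayleigh}), which makes $U$ a sum of $\nu(e)$ \IID variables in $(0,1]$ of mean $\ge c_{b}$; and the exponential moment hypothesis on $\nu(e)$. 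Combining the latter two through a concentration estimate for $U$, the trees with $U\ge\tfrac{c_{b}}{2}\nu(e)$ (the bulk) contribute a controllable amount, and the exponential tail of $\nu(e)$ disposes of the complement. The main obstacle is that the displayed recursion does \emph{not} close as a naive $L^{1}$-contraction: the factor $\lambda/(\lambda+U)^{2}$ is in general only bounded by $\lambda^{-1}$, giving an effective multiplier $\asymp\mu/\lambda>1$, and this is essentially sharp on ``trap-like'' trees (those with $\psi_{\lambda}(\mathbf{T})$ close to $1$, equivalently $U$ small), for which moreover $\phi_{\lambda}(\mathbf{T})$ is large. The heart of the argument is therefore a quantitative control of how atypical such trees are — which is precisely where the exponential moments of the offspring law and the uniform lower bound $c_{b}$ are needed, and where one bootstraps on the pointwise-finite moments furnished by Lemma~\ref{lem:est:sigma} — and this is most delicate as $\lambda\uparrow\mu$, where $c_{b}$ degenerates and transience is only marginal.
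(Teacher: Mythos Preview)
Your reduction via Markov's inequality to the uniform first-moment bound
\[
\sup_{\lambda\in[a,b]}\Eb\!\left[E^{\mathbf{T}^{*}}_{\lambda}\!\left[\sigma_{e^{*}}\ind_{\{\sigma_{e^{*}}<\infty\}}\right]\right]<\infty
\]
is a valid sufficient condition, and the first-excursion recursion you write down is correct. But the proof is not complete: you yourself identify that the recursion does \emph{not} close as an $L^{1}$ contraction (the effective multiplier is of order $\mu/\lambda>1$), and the paragraph that follows is a description of what a fix would have to accomplish, not an argument. ``Quantitative control of how atypical trap-like trees are'' is exactly the hard part, and nothing you have written bounds the contribution of the event $\{U\ \text{small}\}$; the exponential moments of $\nu(e)$ and the lower bound $c_{b}$ on $\Eb[1-\psi_{\lambda}]$ do not by themselves turn a supercritical recursion into a subcritical one. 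As you note, the difficulty is genuine near $\lambda\uparrow\mu$, and bootstrapping on the pointwise finiteness from Lemma~\ref{lem:est:sigma} does not obviously yield uniformity. As it stands, this is a plan with its central step missing.

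The paper takes a completely different and much shorter route that avoids any recursive analysis and does not go through the stronger uniform moment bound. One splits on the first regeneration level: fix a sequence $s_{n}\to\infty$ and write
\[
\Pt^{\Tr^{*}}_{\lambda}(n<\sigma_{e^{*}}<\infty)\le \Pt_{\lambda}(d(Z_{\tau_{1}})\ge s_{n})+\Pt^{\Tr^{*}}_{\lambda}(n<\sigma_{e^{*}}<\infty,\ d(Z_{\tau_{1}})<s_{n}).
\]
The first term is handled uniformly in $\lambda\in[a,b]$ by the uniform exponential moments of $d(Z_{\tau_{1}})$ (Lemma~\ref{l:empMom}). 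For the second, observe that on $\{d(Z_{\tau_{1}})<s_{n}\}$ the walk cannot have reached level $s_{n}$ before returning to $e^{*}$, so the event is contained in $\{\max_{m\le n}d(Z_{m})<s_{n}\}$; this last probability is monotone in $\lambda$ and is bounded by the corresponding probability for a $\mu$-biased walk on $\Zb^{+}$, namely $\Pt^{\Zb^{+}}_{\mu}(\kappa(s_{n})>n)$, which one controls by the elementary estimate $\Et^{\Zb^{+}}_{\mu}[\kappa(l)]\le C_{\mu}\mu^{l}$. Choosing $s_{n}=\log n/\log(\mu^{2})$ makes both terms go to zero. This argument is robust precisely where yours stalls: it never needs to close a recursion, and the uniformity in $\lambda$ comes for free from monotonicity and Lemma~\ref{l:empMom}.
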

\begin{proof}
%Let $\tau:=\inf\{n>0:d(Z_n)\neq d(Z_k) \; \forall k<n, \; d(Z_{n-1})\neq d(Z_k) \; \forall k>n\}$ denote the first level regeneration time then $\eta:=d(Z_\tau)$ is the level at which the first level regeneration occurs. By \cite[Lemma 4.2]{DGPZ} we have that for any $\lambda\in(0,\mu)$ there exists $\theta>0$ such that $\Et_\lambda[e^{\theta \eta}]<\infty$. Lemma \ref{l:UniExpLev} is a simple extension, similar to \cite[Lemma 4.4]{boto19}, using that $\Pt^{\Ts}_\lambda(\sigma_{e^*}<\infty)$ is monotonic in $\lambda$. 
%\begin{lem}\label{l:UniExpLev}
%Suppose $p_0=0$ and $[a,b]\subset (0,\mu)$. There exists $\theta>0$ such that
%\[\sup_{\lambda\in[a,b]}\Et_\lambda[e^{\theta \eta}]<\infty.\]
%\end{lem}

Let $s_n\rightarrow \infty$ be an increasing sequence that we shall specify later and $\theta>0$ be as in Lemma \ref{l:empMom}. We now split into the cases where $d(Z_{\tau_1})<s_n$ and $d(Z_{\tau_1})\geq s_n$. First, for $d(Z_{\tau_1})\geq s_n$, by Markov's inequality
\begin{flalign*}
&\lim_{n\rightarrow\infty}\sup_{\lambda\in[a,b]}\Eb\left[\Pt^{\Tr^*}_\lambda(n<\sigma_{e^*}<\infty,d(Z_{\tau_1})\geq s_n)\right]\\
& \leq \lim_{n\rightarrow\infty}\sup_{\lambda\in[a,b]}\Eb\left[\Pt^{\Tr^*}_\lambda(d(Z_{\tau_1})\geq s_n)\right] \\
& = \lim_{n\rightarrow\infty}\sup_{\lambda\in[a,b]}\Pt_\lambda(d(Z_{\tau_1})\geq s_n) \\
& \leq \lim_{n\rightarrow\infty}\sup_{\lambda\in[a,b]}\Et_\lambda[e^{\theta d(Z_{\tau_1})}]e^{-\theta s_n} 
\end{flalign*}
which converges to $0$ as $n\rightarrow \infty$ by Lemma \ref{l:empMom}.

For $d(Z_{\tau_1})<s_n$ we note that 
\[\{n<\sigma_{e^*}<\infty, \; d(Z_{\tau_1})<s_n\}\subset\{\max_{m\leq n}d(Z_m)<s_n\}\]
since once the walk reaches level $s_n$ it cannot return to $e^*$ on the event $\{d(Z_{\tau_1})<s_n\}$. With a slight abuse of notation, let $\Zb^+$ denote the tree which is isomorphic to $\Zb^+$. By comparison with a simple random walk on $\Zb^+$ we have that
\begin{flalign*}
&\lim_{n\rightarrow\infty}\sup_{\lambda\in[a,b]}\Eb\left[\Pt^{\Tr^*}_\lambda(n<\sigma_{e^*}<\infty,d(Z_{\tau_1})< s_n)\right]\\
& \leq \lim_{n\rightarrow\infty}\sup_{\lambda\in[a,b]}\Pt_\lambda\left(\max_{m\leq n}d(Z_m)<s_n\right) \\
& \leq \lim_{n\rightarrow\infty}\Pt_\mu^{\Zb^+}\left(\max_{m\leq n}d(Z_m)<s_n\right) \\
& = \lim_{n\rightarrow\infty}\Pt_\mu^{\Zb^+}\left(\kappa(s_n)>n\right)
\end{flalign*}
where $\kappa(l):=\inf\{n\geq 0: d(Z_n)=l\}$ is the first hitting time of level $l$.

A simple calculation shows that $\Et_\mu^{\Zb^+}[\kappa(l)]\leq C_\mu \mu^{l}$. Therefore, choosing $s_n=\log(n)/\log(\mu^2)$ and using Markov's inequality we have that $\Pt_\mu^{\Zb^+}\left(\kappa(s_n)>n\right)\leq C_\mu n^{-1/2}$ and therefore
\[\lim_{n\rightarrow\infty}\sup_{\lambda\in[a,b]}\Eb\left[\Pt^{\Tr^*}_\lambda(n<\sigma_{e^*}<\infty,d(Z_{\tau_1})< s_n)\right]=0.\]
\end{proof}

We are now ready to prove that the escape probability $\Eb[P_{\lambda}^{\Tr^*}(\sigma_{e^*(\Tr)}<\infty)]$ is a continuous function of the bias.
\begin{proof}[Proof of Proposition \ref{p:esc-conti}]
%We first prove the continuity of the escape probability $$\Eb[P_{\lambda}^{\Tr^*}(\sigma_{e^*(\Tr)}=\infty)],$$ which appears in the denominator of the conditional expectation $E_{\lambda}^{\tt NB}$.
First note that we can assume $p_0=0$ without loss of generality since $P_{\lambda}^{\Tr^*}\left(\sigma_{e^*(\Tr)}=\infty\right)=P_{\lambda}^{\Tr^*_g}\left(\sigma_{e^*(\Tr_g)}=\infty\right)$ $\Pb$-a.s.\ where we recall that $\Tr^*_g$ is the backbone of $\Tr^*$.
%We will show that the function $$\lambda\mapsto\Eb[P_{\lambda}^{\Tr^*}(\sigma_{e^*(\Tr)}<\infty)]$$ is continuous. 

Note that for any tree $\Ts$ and any $n\in\mathbb{N}$, the function
$\lambda\mapsto P_{\lambda}^{\Ts}(\sigma_{e^*}<n)$
is continuous since $P_{\lambda}^{\Ts}(\sigma_{e^*}<n)$ only depends on the first $n$ steps of $(Z_n)$.
We now show that for any $n\in\Nb$,
\begin{align}\label{eq:conti-n}
 {\rm the\ function}\ \lambda\mapsto \Eb\left[P_{\lambda}^{\Tr^*}(\sigma_{e^*}<n)\right]\ {\rm is\ continuous.} 
 \end{align}
 For a tree $\Ts$ and $n\in\Nb$ we write $\Ts[n]$ for the truncated tree up to $n\th$ generation, and define 
 \[\mathcal{B}_{n,m}:=\bigcap_{k=1}^n\{\Ts;\ {\rm every\ vertex\ of\ \Ts[{\it n}]\ in\ {\it k}\th\ generation\ has\ at\ most\ {\it m}\ children}\}.\]
 Noticing that $P_{\lambda}^{\Ts}(\sigma_{e^*}<n)$ only depends on $\Ts[n]$ and that
 $\{\Ts[n];\ \Ts\in\mathcal{B}_{n,m}\}$ is a finite set,
 we obtain that the function $\lambda\mapsto \Eb\left[P_{\lambda}^{\Tr^*}\left(\sigma_{e^*}<n\right)\ind_{\mathcal{B}_{n,m}}\right]$ is a continuous function.
 Now the claim \eqref{eq:conti-n} follows since $\mathbb{P}(\mathcal{B}_{n,m}^c)$ is independent of $\lambda$ and 
 converges to $0$ as $m\to\infty$ for any $n$.

In order to deduce the conclusion from \eqref{eq:conti-n}, it suffices to prove that 
$$\Eb\left[P_{\lambda}^{\Tr^*}\left(n<\sigma_{e^*}<\infty\right)\right]$$
 is uniformly convergent to $0$ in $(\lambda_c,\mu)$ as $n\to\infty$. This immediately follows from
 Lemma \ref{l:Betn}.
\end{proof}

%\begin{comAdam}
%I have included an explanation as to why this monotonicity holds. Let me know if anything needs clarifying. 
%If we remove these proofs then this section loses its structure. There may be a better way of arranging it.
%\end{comAdam}
\if0
First note that
\[\sup_{\lambda \leq b}\Pt_{\lambda}(\sigma_e=\infty)\geq \Pt_{b}(\sigma_e=\infty)>0\]
therefore, since 
\[\Et_{\lambda}[(\tau_2-\tau_1)^u]=\Et_{\lambda}[\tau_1^u|\sigma_e=\infty]
=\frac{\Et_{\lambda}[\tau_1^u\ind_{\{\sigma_e=\infty\}}]}{\Pt_\lambda(\sigma_e=\infty)}\]
it suffices to consider $\sup_{\lambda \in[1-\varepsilon,b]}\Et_{\lambda}[\tau_1^u\ind_{\{\sigma_e=\infty\}}]$.

Using the uniform exponential moment bound \eqref{e:empMom}, the Cauchy-Schwarz inequality and integration by parts we have
\begin{align*}
\Et_{\lambda}[\tau_1^u\ind_{\{\sigma_e=\infty\}}] 
& = \sum_{n=1}^\infty \Et_{\lambda}[\tau_1^u;\sigma_e=\infty,d(Z_{\tau_1})=n] \\
& = \sum_{n=1}^\infty \Et_{\lambda}[\Delta_n^u;\sigma_e=\infty,d(Z_{\tau_1})=n] \\
& = \sum_{n=1}^\infty \Et_{\lambda}[\Delta_n^{2u};\sigma_e=\infty]^{1/2}\Pt_\lambda(d(Z_{\tau_1})=n)^{1/2} \\
& \leq \Et_\lambda[e^{\theta d(Z_{\tau_1})}]\sum_{n=1}^\infty e^{-\theta n} \Et_{\lambda}[\Delta_n^{2u};\sigma_e=\infty]^{1/2} \\
& \leq \Et_\lambda[e^{\theta d(Z_{\tau_1})}]\sum_{n=1}^\infty e^{-\theta n}n^{10u}
\sum_{k=0}^\infty (k+1)^{2u} \Pt_{\lambda}(\Delta_n>kn^{10}, \sigma_e=\infty).
\end{align*}

By Lemma \ref{l:Gir} we have that for $\varepsilon>0$ suitably small
\begin{align*}
\sup_{h\in(0,\varepsilon)}\Pt_{1-h}(\Delta_n>kn^{10}, \sigma_e=\infty)
& \leq\sup_{h\in(0,\varepsilon)}\Pt_{1-h}(\Delta_n>kn^{10}, \sigma_e>kn^{10})\\
& \leq e^{\varepsilon n}\Pt_{1}(\Delta_n>kn^{10}, \sigma_e>kn^{10}).
\end{align*}
Choosing $\varepsilon<\theta/2$ and using \eqref{e:empMom}, it suffices to show that 
\[\sup_{\lambda\in[1,b]}\sum_{n=1}^\infty e^{-\theta n/2}n^{10u}\sum_{k=0}^\infty (k+1)^{2u} \Pt_{\lambda}(\Delta_n>kn^{10}, \sigma_e>kn^{10})<\infty.\]

For $k\geq 1$, let
\[\Ac_{1,k,n}:=\bigcup_{m\leq kn^{10}}\{|\nu(Z_m)|\geq \log(kn^{10})^2\}\]
be the event that the walk visits a vertex with at least $\log(kn^{10})^2$ offspring by time $kn^{10}$. By the exponential moments assumption we have that for all $n$ large 
\[\Pt_\lambda(\Ac_{1,k,n})\leq kn^{10}\Pb(|\nu(e)|\geq \log(kn^{10})^2)\leq e^{-c\log(n^{10})^2}e^{-c\log(k)^2}\]
for some constant $c$ depending only on $\beta$.

Let $N_{k,n}:=|\{m\leq kn^{10}: Z_l\neq Z_m \forall l<m\}|$ be the number of distinct vertices visited by time $kn^{10}$. Set
\[\Ac_{2,k,n}:=\left\{N_{k,n}<\sqrt{kn^{10}}\right\}\cap\left\{\sigma_e>kn^{10}\right\}\]
to be the event that, up to time $kn^{10}$, the walk visits at most $(kn^{10})^{1/2}$ distinct vertices and does not return to the root.  On the event $\Ac_{2,k,n}\cap \Ac_{1,k,n}^c$ there is a time $m\leq kn^{10}$ and a vertex $v$ with degree at most $\log(kn^{10})^2$ such that $Z_m=v$ and $v$ is subsequently visited at least $(kn^{10})^{1/2}$ times without a visit to the root. By the Gambler's ruin, for a walk started at $v$ of distance at most $n$ from the root, the probability that the walk returns to $v$ before reaching the root is at most 
$1-1/(\textcolor{red}{2}n\log(kn^{10})^2)$ uniformly in $k, m, v$ and $\lambda\geq 1$. It follows that the probability that $v$ is visited by the the walk $(kn^{10})^{1/2}$ times without a visit to the root is at most 
\[\left(1-\frac{1}{\textcolor{red}{2}n\log(kn^{10})^2}\right)^{\sqrt{kn^{10}}}.\]
\begin{comYuki}
This is not important at all but when $\lambda=1$,
\[\frac{\lambda}{\lambda+\log(kn^{10})^2}\leq \frac{1}{\log(kn^{10})^2}\ \ {\rm though}\ \ 
\frac{\lambda}{\lambda+\log(kn^{10})^2}\geq \frac{1}{2\log(kn^{10})^2}.\]
\end{comYuki}

It follows that for $n$ suitably large (independently of $k\geq 1$)
\begin{align*}
\Pt_\lambda(\Ac_{2,k,n}) 
& \leq \Pt_\lambda(\Ac_{1,k,n}) +kn^{10}\left(1-\frac{1}{\textcolor{red}{2}n\log(kn^{10})^2}\right)^{\sqrt{kn^{10}}} \\
& \leq 2e^{-c\log(n^{10})^2}e^{-c\log(k)^2}. 
\end{align*}

On the event $\Ac_{2,k,n}^c\cap\{\sigma_e>kn^{10}\}$ there are at least $k^{1/2}n^3$ vertices which are visited by the walk before time $kn^{10}$ with at least time $n^2$ between the first hitting times. Write $\psi_1:=\min\{m>0:Z_l\neq Z_m \forall l<m\}$ and, for $i\geq 2$, 
\[\psi_i:=\min\{m>\psi_{i-1}+n^2:Z_l\neq Z_m \forall l<m\}.\] 

Let $\Gc_m$ be the $\sigma$-algebra generated by the walk up to time $m$. We have that
\begin{align*}
&\Pt_\lambda(\Delta_n>kn^{10}, \sigma_e>kn^{10},\Ac_{2,k,n}^c) \\
& \leq \Pt_\lambda\left(\bigcap_{i=1}^{k^{1/2}n^3}\left\{\max_{m\leq n^2}|d(Z_{\psi_i})-d(Z_{\psi_i+m})|<n\right\}\right) \\
& \leq \prod_{i=1}^{k^{1/2}n^3} \Pt_\lambda\left(\max_{m\leq n^2}|d(Z_{\psi_i})-d(Z_{\psi_i+m})|<n|\Gc_{\psi_i}\right) \\
&= \prod_{i=1}^{k^{1/2}n^3} \left(1-\Pt_\lambda\left(\max_{m\leq n^2}|d(Z_{\psi_i})-d(Z_{\psi_i+m})|\geq n|\Gc_{\psi_i}\right)\right) \\
& \leq \prod_{i=1}^{k^{1/2}n^3} \left(1-\Pt_\lambda\left(\max_{m\leq n^2}|d(Z_{\psi_i})-d(Z_{\psi_i+m})|\geq n, d(Z_m)\geq d(Z_{\psi_i}) \forall m\geq \psi_i\right)\right) \\
&= \prod_{i=1}^{k^{1/2}n^3} \left(1-\Pt_\lambda\left(\Delta_n<n^2, \sigma_e=\infty\right)\right) \\
& = \prod_{i=1}^{k^{1/2}n^3} \left(1-\Pt_\lambda\left(\sigma_e=\infty\right)\Pt_\lambda\left(\Delta_n<n^2| \sigma_e=\infty\right)\right)
\end{align*}
where the final inequality follows from the fact that if the walk regenerates at time $\psi_i$ then $(Z_m)_{m\geq \psi_i}$ is independent of $\Gc_{\psi_i}$ (conditionally on $Z_{\psi_i}$).

We have seen that $\Pt_\lambda\left(\sigma_e=\infty\right)$ is bounded away from $0$ for $\lambda \in[1,b]$ therefore it remains to show that, for $n$ large, $\Pt_\lambda\left(\Delta_n<n^2| \sigma_e=\infty\right)$ is bounded away from $0$ uniformly in $\lambda\in[1,b]$.By Markov's inequality
\begin{align*}
\Pt_\lambda\left(\Delta_n\geq n^2| \sigma_e=\infty\right)
& \leq \frac{\Et_\lambda[\Delta_n|\sigma_e=\infty]}{n^2}\\
& \leq \frac{\Et_\lambda[\tau_2-\tau_1]}{n}\\
& \leq \frac{\Et_\lambda[d(Z_{\tau_2})-d(Z_{\tau_1})]}{\upsilon_\lambda n}
\end{align*}
where we have used that there are at most $n$ regenerations up to level $n$ and the formula of the speed \eqref{speed}. By Lemma \ref{l:SpBnd} and \eqref{e:empMom} we then have that this converges to $0$ (uniformly in $\lambda$) as $\nin$ which completes the proof.
\fi

\section{Moments of generation sizes of Galton-Watson trees}\label{s:mom}
In this section we prove several technical estimates for subcritical GW-trees which we will require later when showing moment bounds for the time between regenerations of the walk. For this section, we take $W_n$ to be a GW-process with mean number of offspring $\mu:=\Eb[W_1]<1$ which will typically be applied as $\lambda_c$ in Section \ref{s:reg}.

The following lemma gives a bound on the moments of generation sizes of GW-processes.  The main purpose of this lemma is to prove Lemma \ref{l:GWProd}.
\begin{lem}\label{l:GWMom}
Suppose $W_n$ is a GW-process with mean number of offspring $\mu:=\Eb[W_1]<1$ and which satisfies $\Eb[\beta^{W_1}]<\infty$ for some $\beta>1$. Then, for any $m\in\Nb$ there exists $C_m<\infty$ such that $\Eb[W_n^m]\leq C_m\mu^n$.
\end{lem}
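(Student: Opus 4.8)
The plan is to prove the bound by induction on $m$, the case $m=1$ being the exact identity $\Eb[W_n]=\mu^n$, so one may take $C_1=1$. Observe first that $\Eb[\beta^{W_1}]<\infty$ forces $\Eb[W_1^s]<\infty$ for every $s\in\Nb$, since $x^s\le c_{s,\beta}\,\beta^x$ for all $x\ge0$.

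For the inductive step, fix $m\ge2$ and assume the claim for all exponents $k<m$. Writing $W_{n+1}=\sum_{i=1}^{W_n}\xi_i$ with $(\xi_i)_{i\ge1}$ i.i.d.\ copies of $W_1$ independent of $W_n$, expanding the $m$-th power and conditioning on $W_n$ gives
\[
\Eb\bigl[W_{n+1}^m \bigm| W_n\bigr]
= \sum_{P}\Bigl(\prod_{B\in P}\Eb\bigl[W_1^{|B|}\bigr]\Bigr)\,W_n(W_n-1)\cdots(W_n-|P|+1),
\]
the sum being over set partitions $P$ of $\{1,\dots,m\}$ with $|P|$ blocks; this simply records which of the indices appearing in the multinomial expansion of $\bigl(\sum_i\xi_i\bigr)^m$ coincide, distinct blocks contributing independent factors.

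The point is that the unique all-singletons partition contributes $\mu^m\,W_n(W_n-1)\cdots(W_n-m+1)\le\mu^m W_n^m$, whereas each remaining partition has at most $m-1$ blocks and hence contributes at most $c_m\,W_n^{m-1}$ for a constant $c_m$ depending only on $m$ and on $\Eb[W_1^s]$, $s\le m$. Taking expectations and using the induction hypothesis to bound $\Eb[W_n^{m-1}]\le C_{m-1}\mu^n$, we obtain a finite constant $D_m$ with
\[
\Eb[W_{n+1}^m] \le \mu^m\,\Eb[W_n^m] + D_m\,\mu^n.
\]
Since $m\ge2$ and $\mu\in(0,1)$ we have $\mu-\mu^m>0$, so taking $C_m:=\max\bigl\{\Eb[W_1^m]/\mu,\ D_m/(\mu-\mu^m)\bigr\}$ and arguing by induction on $n$ starting from $\Eb[W_0^m]=1$ yields $\Eb[W_n^m]\le C_m\mu^n$ for all $n$, which closes the induction on $m$.

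The only mildly delicate steps are the combinatorial identity for the conditional $m$-th moment — bookkeeping the partition structure of the multi-indices — and checking that the non-singleton partitions genuinely contribute $O(\mu^n)$ after averaging, which is precisely where the induction on $m$ enters. The essential analytic observation, responsible for the conclusion, is that the leading coefficient $\mu^m$ is strictly smaller than $\mu$, so that the linear recursion $a_{n+1}\le\mu^m a_n+D_m\mu^n$ has solution decaying at the rate $\mu^n$ (and no faster).
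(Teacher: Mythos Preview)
Your proof is correct. Both your argument and the paper's proceed by induction on $m$, use the branching decomposition of $W_{n+1}$, and reduce to a linear recursion in $n$; the difference is which generation you condition on. The paper conditions on $W_1$, writing $W_{n+1}=\sum_{k=1}^{W_1}W_n^{(k)}$ with the $W_n^{(k)}$ independent copies of $W_n$; the leading term (all indices equal) then gives $\mu\,\Eb[W_n^m]$, and the induction hypothesis bounds the cross terms by $\sum_{l\ge2}M_l^m\mu^{nl}$, so the recursion has leading coefficient $\mu$ and a remainder decaying strictly faster than $\mu^n$. You instead condition on $W_n$, writing $W_{n+1}=\sum_{i=1}^{W_n}\xi_i$; now the leading term (all indices distinct) gives $\mu^m\,\Eb[W_n^m]$, and the remainder is $O(\Eb[W_n^{m-1}])=O(\mu^n)$. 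Your recursion $a_{n+1}\le\mu^m a_n+D_m\mu^n$ is marginally cleaner to close, since the contraction factor $\mu^m<\mu$ is strictly below the target rate, whereas the paper must iterate its recursion and sum a geometric series. Either conditioning works; yours is a pleasant dual of the paper's.
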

\begin{proof}
We prove this inductively in $m$. The case $m=1$ holds with $\Eb[W_n]=\mu^n$ (cf.\ Chapter I.2 of \cite{atne04}). Suppose that for some $m\geq 2$ there exist $C_j<\infty$ for $j=1,...,m-1$ such that $\Eb[W_n^j]\leq C_j\mu^n$. 

Let $W_n^{(1)}, W_n^{(2)},...$  be independent copies of $W_n$ then, using the branching property,
\begin{flalign*}
\Eb[W_{n+1}^m] 
= \Eb[\Eb[W_{n+1}^m|W_1]] 
= \Eb\left[\Eb\left[\left(\sum_{k=1}^{W_1}W_n^{(k)}\right)^m\big|W_1\right]\right].
\end{flalign*}
For $l,m,N\in\Nb$ let 
$\Ic_l^m(N):=\{{\Kr}=(k_1,...,k_m)\in\{1,...,N\}^m:\sum_{j=1}^N\ind_{\bigcup_{i=1}^m\{k_i=j\}}=l\}$ be the $m$-tuples of positive integers at most $N$ with exactly $l$ distinct values. Expanding the term in the above expression and using that $W_n^{(k)}$ are independent of $W_1$ we have that
\begin{flalign}\label{e:lsum}
\Eb[W_{n+1}^m]  
&= \Eb\left[\Eb\left[\sum_{\Kr\in\{1,...,W_1\}^m}\prod_{i=1}^{m}W_n^{(k_i)}\big|W_1\right]\right] \notag \\
&= \sum_{l=1}^m\Eb\left[\sum_{\Kr\in\Ic_l^m(W_1)}\Eb\left[\prod_{i=1}^{m}W_n^{(k_i)}\right]\right].
\end{flalign}

If $\Kr\in\Ic_1^m(W_1)$ then $k_i=k_1$ for all $i$ and, since there are $W_1$ choices of $k_1$, we have
\begin{flalign}\label{e:le1}
\Eb\left[\sum_{\Kr\in\Ic_1^m(W_1)}\Eb\left[\prod_{i=1}^{m}W_n^{(k_i)}\right]\right]
= \Eb\left[\sum_{\Kr\in\Ic_1^m(W_1)}\Eb[(W_n^{(k_1)})^m]\right]
= \Eb[W_1]\Eb[W_n^m].
\end{flalign}
Otherwise, using independence of $W_n^{(1)}, W_n^{(2)},...$ and our induction hypothesis that $\Eb[W_n^j]\leq C_j\mu^n$ for $j\leq m-1$, for $k\in\Ic_l^m(W_1)$ we have
\begin{flalign*}
\Eb\left[\prod_{i=1}^{m}W_n^{(k_i)}\right]\leq \mu^{nl}\left(\max_{j\leq m-1}C_j\right)^l.
\end{flalign*}
There are $\prod_{j=0}^{l-1}(W_1-j)$ choices for the $l$ distinct values in $\{1,...,W_1\}$ then $l^{m-l}$ choices for the remaining $m-l$ duplicates and at most $m!$ orderings of the indices. In particular, for $l\geq 2$,
\begin{flalign}\label{e:lg2}
\Eb\left[\sum_{\Kr\in\Ic_l^m(W_1)}\Eb\left[\prod_{i=1}^{m}W_n^{(k_i)}\right]\right]
\leq \mu^{nl}(\max_{j\leq m-1}C_j)^ll^{m-l}m!\Eb\left[\prod_{j=0}^{l-1}(W_1-j)\right].
\end{flalign}

By the exponential moment assumption we have that $\Eb[\prod_{j=0}^{l-1}(W_1-j)]\leq \Eb[W_1^l]<\infty$. Combining \eqref{e:lsum}, \eqref{e:le1} and \eqref{e:lg2}, we can choose constants $M_l^m$ such that
\begin{flalign*}
\Eb[W_{n+1}^m]
&\leq \mu\Eb[W_n^m]+\sum_{l=2}^mM_l^m\mu^{nl} \\
&\leq \mu^2\Eb[W_{n-1}^m]+\mu\sum_{l=2}^mM_l^m\mu^{(n-1)l}+\sum_{l=2}^mM_l^m\mu^{nl} \\
&= \mu^2\Eb[W_{n-1}^m]+\sum_{l=2}^mM_l^m\mu^{nl}(1+\mu^{-(l-1)}). 
\end{flalign*}
Iterating and using the geometric sum formula yields
\begin{flalign*}
\Eb[W_{n+1}^m]
&\leq \mu^n\Eb[W_1^m]+\sum_{l=2}^mM_l^m\mu^{nl}\sum_{k=0}^{n-1}\mu^{-k(l-1)}\\
&\leq \mu^n\Eb[W_1^m]+\sum_{l=2}^mM_l^m\mu^{nl}\frac{\mu^{-n(l-1)}-1}{\mu^{-(l-1)}-1}
\end{flalign*}
which is bounded above by $C_m\mu^{n+1}$ as required since $\Eb[W_1^m]<\infty$ by the exponential moments assumption.
\end{proof}
The corresponding lower bound holds trivially by noting that $\Pb(W_n\geq 1)\leq\Eb[W_n^m]$ for any $m\in\Nb$ and using that $\Pb(W_n\geq 1)\mu^{-n}$ is decreasing and converges (e.g.\ Theorem B in \cite{LPP1}). This shows that, up to constants, this is the best possible bound.
 
The following result extends Lemma \ref{l:GWMom} to the expectation of products of the generation sizes at varying times. This is an extension of Lemma 2.4.1 in \cite{bo17} which proves this for $m\leq 3$.
\begin{lem}\label{l:GWProd}
Suppose $W_n$ is a GW-process with mean number of offspring $\mu:=\Eb[W_1]<1$ and which satisfies $\Eb[\beta^{W_1}]<\infty$ for some $\beta>1$. Then, for any $m\in\Nb$ there exists $\tilde{C}_m<\infty$ such that for any $(n_i)_{i=1}^m\in\Nb^m$ we have 
\[\Eb\left[\prod_{i=1}^mW_{n_i}\right]\leq \tilde{C}_m\mu^{\max_{l\leq m}n_l}.\]
\end{lem}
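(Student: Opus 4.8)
The plan is to argue by induction on $m$. The base case $m=1$ is immediate since $\Eb[W_{n_1}]=\mu^{n_1}$, so one may take $\tilde C_1=1$. For the inductive step, suppose the claim holds with constants $\tilde C_1,\dots,\tilde C_{m-1}$, and relabel the times so that $n_1\le n_2\le\cdots\le n_m$; thus $\max_{l\le m}n_l=n_m$. If $n_1=0$ then $W_{n_1}=1$ and the bound follows at once from the induction hypothesis applied to the $m-1$ times $n_2,\dots,n_m$, so we may assume $n_1\ge 1$.

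First I would condition on $\Fc_{n_1}$, the $\sigma$-algebra generated by the first $n_1$ generations of $W$. Writing the $W_{n_1}$ individuals in generation $n_1$ as the roots of independent copies $\{W^{(v)}_j\}_{v\ge1}$ of the process (which are independent of $\Fc_{n_1}$), we have $W_{n_i}=\sum_{v=1}^{W_{n_1}}W^{(v)}_{n_i-n_1}$ for every $i\ge2$. Expanding $\prod_{i=2}^m W_{n_i}$ as a sum over maps $\phi\colon\{2,\dots,m\}\to\{1,\dots,W_{n_1}\}$ of $\prod_{i=2}^m W^{(\phi(i))}_{n_i-n_1}$ and grouping these maps according to the partition $\{B_1,\dots,B_r\}$ of $\{2,\dots,m\}$ into the fibres of $\phi$, the independence of the families $W^{(v)}$ across $v$ and of $\Fc_{n_1}$ makes the conditional expectation factorise over blocks; moreover exactly $W_{n_1}(W_{n_1}-1)\cdots(W_{n_1}-r+1)\le W_{n_1}^{\,r}$ maps $\phi$ induce a given partition into $r$ labelled blocks. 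Hence
\[
\Eb\!\left[\prod_{i=1}^m W_{n_i}\,\Big|\,\Fc_{n_1}\right]
\le W_{n_1}\sum_{\{B_1,\dots,B_r\}}W_{n_1}^{\,r}\prod_{s=1}^r\Eb\!\left[\prod_{i\in B_s}W_{n_i-n_1}\right],
\]
the sum running over all partitions of $\{2,\dots,m\}$, and taking expectations yields
\[
\Eb\!\left[\prod_{i=1}^m W_{n_i}\right]
\le \sum_{\{B_1,\dots,B_r\}}\Eb\!\left[W_{n_1}^{\,r+1}\right]\prod_{s=1}^r\Eb\!\left[\prod_{i\in B_s}W_{n_i-n_1}\right].
\]

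To conclude, every block has cardinality at most $m-1$, so the induction hypothesis bounds $\Eb[\prod_{i\in B_s}W_{n_i-n_1}]$ by $\tilde C_{|B_s|}\mu^{\max_{i\in B_s}(n_i-n_1)}$; since $\mu<1$, the exponents are non-negative and one of the blocks contains the index $m$, the product over $s$ of these factors is at most $\mu^{\,n_m-n_1}$. Since $r\le m-1$ we have $r+1\le m$, so Lemma \ref{l:GWMom} gives $\Eb[W_{n_1}^{\,r+1}]\le C_{r+1}\mu^{n_1}$. As there are only finitely many partitions of $\{2,\dots,m\}$, collecting the finitely many constants produces $\Eb[\prod_{i=1}^m W_{n_i}]\le \tilde C_m\,\mu^{n_1}\cdot\mu^{\,n_m-n_1}=\tilde C_m\,\mu^{n_m}$, which closes the induction.

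I expect the main obstacle to be purely combinatorial bookkeeping: keeping the fibre/partition expansion straight, checking that every block that appears has size strictly less than $m$ (so that the induction hypothesis is applicable) and that the power of $W_{n_1}$ that occurs never exceeds $m$ (so that Lemma \ref{l:GWMom} can absorb it). No probabilistic input beyond Lemma \ref{l:GWMom} and the branching property is required.
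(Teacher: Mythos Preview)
Your proof is correct. The induction on $m$, the fibre/partition expansion after conditioning on $\Fc_{n_1}$, and the bookkeeping you flag at the end all go through: every block of $\{2,\dots,m\}$ has size at most $m-1$ so the induction hypothesis applies, the power of $W_{n_1}$ is at most $m$ so Lemma \ref{l:GWMom} absorbs it, and the block containing the index $m$ supplies the factor $\mu^{n_m-n_1}$ while the remaining blocks contribute at most $1$.

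The paper's argument is different and somewhat shorter. Rather than conditioning on the smallest time $n_1$ and expanding combinatorially, it peels off the factors from the top. The key observation is the convexity bound
\[
\Eb\bigl[W_n^l \,\big|\, W_0=j\bigr] \le j^l\,\Eb[W_n^l],
\]
which follows from the branching property and Jensen's inequality. With the $n_i$ ordered increasingly, one conditions on $W_{n_{m-1}}$ to get
\[
\Eb\Bigl[\prod_{i=1}^m W_{n_i}\Bigr]
\le \Eb[W_{n_m-n_{m-1}}]\,\Eb\Bigl[W_{n_{m-1}}^{2}\prod_{i=1}^{m-2}W_{n_i}\Bigr],
\]
and iterating yields $\prod_{i=1}^m \Eb[W_{n_i-n_{i-1}}^{\,m+1-i}]$, to which Lemma \ref{l:GWMom} applies termwise and the exponents telescope to $\mu^{n_m}$. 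This avoids both the induction on $m$ and the partition enumeration; your approach trades that economy for a more explicit (and still finite) constant built from block sizes.
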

\begin{proof}
Let $W_n^{(k)}$ be independent GW-processes for $k\geq 1$. Using the branching property of GW-processes and convexity of polynomials of degree $l\in\Nb$ we have
\begin{flalign}\label{e:GWCond}
\Eb[W_n^l|W_0=j] 
\; =\; \Eb\left[\left(\sum_{k=1}^jW_n^{(k)}\right)^l\right] 
\;\leq\; j^l\Eb\left[\sum_{k=1}^j\frac{(W_n^{(k)})^l}{j}\right] 
\;=\; j^l\Eb[W_n^l].
\end{flalign} 

Without loss of generality let $n_1\leq n_2\leq...\leq n_m$ be ordered. Noting that $W_n$ is a Markov process, by \eqref{e:GWCond} we have
\begin{flalign*}
\Eb\!\left[\prod_{i=1}^mW_{n_i}\right] 
= \Eb\!\left[\Eb\left[W_{n_m}\big|W_{n_{m-1}}\right]\prod_{i=1}^{m-1}W_{n_i}\right] 
\leq \Eb[W_{n_m-n_{m-1}}]\Eb\!\left[W_{n_{m-1}}^2\prod_{i=1}^{m-2}W_{n_i}\right].
\end{flalign*}
Iterating and applying Lemma \ref{l:GWMom} then gives 
\begin{flalign*}
\Eb\left[\prod_{i=1}^mW_{n_i}\right] 
\;\leq\; \prod_{i=1}^m\Eb[W_{n_i-n_{i-1}}^{m+1-i}]
\;\leq\; \prod_{i=1}^mC_{m+1-i}\mu^{n_k-n_{i-1}}
\;\leq\; \tilde{C}_m\mu^{n_m}
\end{flalign*}
where $\tilde{C}_m=(\max_{l\leq m}C_l)^m<\infty$.
\end{proof}

\section{The proof of Proposition \ref{p:UniMom}}\label{s:reg}
The main aim of this section is to prove Corollary \ref{c:2pe} which states that for any closed ball $B$ contained within $(\lambda_c^{1/2},\mu)$ there exists $\varepsilon>0$ such that the time between regenerations has finite $(2+\varepsilon)\th$ moments uniformly over $\lambda\in B$.  We deduce this from the more general result Proposition \ref{p:UniMom}. 

We first state the following lemma which gives a useful bound for the $\alpha\th$ moments of a geometric random variable. This will be used repeatedly throughout this section. 
%The proof is relatively standard so we defer it to Appendix \ref{s:App}.
\begin{lem}\label{l:GeoAlpha}
For any $\alpha>0$ there exists $C_\alpha<\infty$ such that for any $p\in(0,1)$ we have
\[ \sum_{k=1}^\infty k^\alpha p^k(1-p) \leq C_\alpha p(1-p)^{-\alpha}.\]
\end{lem}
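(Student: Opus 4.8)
The plan is to recognise the left-hand side as (a multiple of) a fractional moment of a geometric random variable and reduce the claim to the standard fact that such moments are comparable to the corresponding power of the mean. Concretely, let $G$ be a geometric random variable supported on $\{1,2,\dots\}$ with $\Pb(G=k)=(1-p)p^{k-1}$, so that $\Eb[G]=(1-p)^{-1}$ and
\[
\sum_{k=1}^\infty k^\alpha p^k(1-p)=p\,\Eb[G^\alpha].
\]
Thus it suffices to exhibit $C_\alpha<\infty$, depending only on $\alpha$, with $\Eb[G^\alpha]\le C_\alpha(\Eb[G])^\alpha$; indeed $C_\alpha(\Eb[G])^\alpha=C_\alpha(1-p)^{-\alpha}$, and multiplying through by $p$ gives exactly the asserted bound.

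For $\alpha\in(0,1]$ this is immediate from Jensen's inequality, since $x\mapsto x^\alpha$ is concave, so $\Eb[G^\alpha]\le(\Eb[G])^\alpha$ and one may take $C_\alpha=1$. For $\alpha>1$ I would first record the identity $\Eb[G^\alpha]=\sum_{k\ge1}\bigl(k^\alpha-(k-1)^\alpha\bigr)\Pb(G\ge k)$, obtained by telescoping $G^\alpha=\sum_{k=1}^G\bigl(k^\alpha-(k-1)^\alpha\bigr)$ and applying Tonelli's theorem. Since the mean value theorem gives $k^\alpha-(k-1)^\alpha\le\alpha k^{\alpha-1}$ for $\alpha\ge1$ and $\Pb(G\ge k)=p^{k-1}$, this yields the recursion
\[
\Eb[G^\alpha]\le\alpha\sum_{k\ge1}k^{\alpha-1}p^{k-1}=\frac{\alpha}{1-p}\,\Eb[G^{\alpha-1}]=\alpha\,\Eb[G]\,\Eb[G^{\alpha-1}].
\]
Iterating this $n:=\lceil\alpha\rceil-1$ times lowers the exponent into $(0,1]$, where the concavity bound of the previous step applies; combining the two estimates produces $\Eb[G^\alpha]\le\bigl(\prod_{j=0}^{n-1}(\alpha-j)\bigr)(\Eb[G])^\alpha$, so that $C_\alpha=\prod_{j=0}^{\lceil\alpha\rceil-2}(\alpha-j)$ works (the empty product being $1$ when $\alpha\le1$).

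I do not anticipate a genuine obstacle here: the argument is elementary and self-contained. The only points to watch are the bookkeeping in the iterated recursion and stopping it at an exponent in $(0,1]$ so that Jensen's inequality is legitimately applicable; it is also worth checking the harmless edge behaviour (the bound is essentially tight as $p\to1^-$, while both sides vanish as $p\to0^+$).
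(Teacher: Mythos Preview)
Your proof is correct and takes a genuinely different route from the paper's. The paper argues analytically: it compares the sum to an integral via the monotonicity bound $\sum_{k\ge1}f(k)g(k)\le\int_1^\infty f(x)g(x-1)\,\d x$ with $f(x)=x^\alpha$, $g(x)=p^x$, then splits into the cases $p\in[1/2,1)$ (where a substitution reduces the integral to $\Gamma(1+\alpha)$) and $p\in(0,1/2]$ (where the series is trivially bounded). Your approach instead interprets the sum probabilistically as $p\,\Eb[G^\alpha]$ for a geometric $G$, handles $\alpha\le1$ by concavity, and for $\alpha>1$ uses the telescoping identity together with the mean-value bound $k^\alpha-(k-1)^\alpha\le\alpha k^{\alpha-1}$ to obtain the recursion $\Eb[G^\alpha]\le\alpha\,\Eb[G]\,\Eb[G^{\alpha-1}]$, which iterates down to the concave range. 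This avoids any case split in $p$ and yields the explicit constant $C_\alpha=\prod_{j=0}^{\lceil\alpha\rceil-2}(\alpha-j)$, which the paper's argument does not provide; on the other hand, the paper's integral comparison is a single self-contained estimate that does not rely on recognising the probabilistic structure. Both are elementary; yours is arguably cleaner.
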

\begin{proof}%[Proof of Lemma \ref{l:GeoAlpha}]
Note that if $f:\Rb\rightarrow \Rb^+$ is increasing and $g:\Rb\rightarrow \Rb^+$ is decreasing then for $x \in[k,k+1)$ we have that $f(x-1)\leq f(k)\leq f(x)$ and $g(x)\leq g(k)\leq g(x-1)$. Therefore,
\begin{align*}
\sum_{k=1}^\infty f(k)g(k) 
&=\sum_{k=1}^\infty \int_k^{k+1} f(k)g(k)\d x\\
& \leq \sum_{k=1}^\infty \int_k^{k+1} f(x)g(x-1)\d x\\
& =\int_1^\infty f(x)g(x-1)\d x. 
\end{align*}

Take the specific case that $f(x)=x^\alpha$ (which is increasing since $\alpha>0$) and $g(x)=p^x$ (which is decreasing for $p\in(0,1)$). Then, for $p\in[1/2,1)$, we have that
\begin{flalign*}
p^{-1}(1-p)^{1+\alpha}\sum_{k=1}^\infty k^\alpha p^k 
& \leq p^{-2}(1-p)^{1+\alpha} \int_1^\infty x^\alpha p^x\d x \\
& = p^{-2}\left(\frac{1-p}{\log(p^{-1})}\right)^{1+\alpha} \int_{\log(p^{-1})}^\infty x^\alpha e^{-x}\d x \\
& \leq  4\Gamma(1+\alpha) 
\end{flalign*}
since $\left(\frac{1-p}{\log(p^{-1})}\right)^{1+\alpha}\leq 1$. 

For $p\in(0,1/2]$ we have that
\[p^{-1}(1-p)^{1+\alpha}\sum_{k=1}^\infty k^\alpha p^k \leq \sum_{k=1}^\infty k^\alpha 2^{-(k-1)}\]
which converges.
\end{proof}
We now introduce some notation concerning hitting and regeneration times. Recall that $\sigma_x:=\inf\{n\geq 1:Z_n=x\}$ is the first return time to $x\in\Tr$. Let $S(0):=0$, $S(n):=\inf\{k> S(n-1):Z_k,Z_{k-1} \in \Tr_g\}$ for $n\geq 1$ and $Y_n:=Z_{S(n)}$, then $Y_n$ is a $\lambda$-biased random walk on $\Tr_g$ coupled to $Z_n$. Write $\zeta_0:=0$ and for $m=1,2,...$ let %\textcolor{red}{\bf Changed the definition!}
%\[\zeta_m:=\inf\{k>\zeta_{m-1}:d(Y_j)<d(Y_k)\leq d(Y_l)  \text{ for all } j<k\leq l\}  \]
\[\zeta_m:=\inf\{k>\zeta_{m-1}:d(Y_j)< d(Y_k), d(Y_l)>d(Y_{k-1})  \text{ for all } j<k\leq l\}  \]
be regeneration times for the walk $Y$. We then have that $\tau_k=\inf\{m\geq 0: Z_m=Y_{\zeta_k}\}$ are the corresponding regeneration times for $Z$ and we define $\varrho_k:=Z_{\tau_k}=Y_{\zeta_k}$ to be the regeneration points.
By Proposition 3.4 of \cite{LPP3} we have that there exists, $\Pt_\lambda$-a.s., an infinite sequence of regeneration times $(\tau_k)_{k\geq 1}$ and 
\[\left\{\left(\tau_{k+1}-\tau_k\right),\left(d(\varrho_{k+1})-d(\varrho_k)\right)\right\}_{k\geq1}\]
are \IID (as are the corresponding variables for $Y$).

Let $\xi_f,\xi_g,\xi_h$ be random variables with probability generating functions $f,g$ and $h$ respectively then let $\xi$ be equal in distribution to the number of vertices in the first generation of $\Tr$. Throughout we will assume that $\xi_f$ has some exponential moments. 

\begin{rem}\label{r:mom}
Since the generation sizes of $\Tr_g$ are dominated by those of $\Tr$ we have that $\xi_g$ is stochastically dominated by $\xi$. Using Bayes' law we have that $\Pr(\xi=k)= p_k(1-q^k)(1-q)^{-1} \leq cp_k$ therefore both $\xi$ and $\xi_g$ inherit the exponential moment bounds of $\xi_f$. Furthermore $\Pr(\xi_h=k)= p_kq^k$ therefore $\xi_h$ automatically has exponential moments. 
\end{rem}

We now show that the duration of an excursion in a single trap has finite $\alpha\th$ moments (uniformly for the bias in a small ball). If $p_0=0$ then traps are trivial therefore assume that $p_0>0$. Denote by $\mathbf{T}_h$ a GW tree with this law and $\mathbf{T}_h^*$ the tree $\mathbf{T}_h$ where we append an additional vertex ${e^*}(\mathbf{T}_h)$ as the root in the usual way (for convenience we write ${e^*}$ when there is no confusion). Let $W_k^{\mathbf{T}_h^*}$ denote the $k\th$ generation size of the tree $\mathbf{T}_h^*$. We denote by $\Pt_{\lambda,x}^{\mathbf{T}_h^*}$ the quenched law of the walk with bias $\lambda$ started from $x$. 
\begin{lem}\label{l:subComp}
Suppose $p_0>0$ and $a<1$ then, for any $\alpha<\log(\lambda_c)/\log(a)$,
  \[\sup_{\lambda\geq a}\Eb\left[\Et^{\mathbf{T}_h^*}_{\lambda}\left[\sigma_{e^*}^\alpha\right]\right] <\infty.\]

%  \begin{comYuki}
%  Why do you need $\sum_{k=1}^\infty p_k\beta^k<\infty$ to prove this?
%  Now $\{p_k\}$ is the offspring distribution of the $f$-GW tree, and that of the $h$-GW tree automatically has
%  finite exponential moment as you mentioned in Remark \ref{r:mom}.
%  As far as I understand, this lemma seems independent of the condition $\sum_{k=1}^\infty p_k\beta^k<\infty$.
%  \end{comYuki}
%  \begin{comAdam}
%   You're right, I was thinking of it as if $p_k$ was the offspring distribution of $h$. I've edited the statement.
%  \end{comAdam}
%   \begin{comAdam}
%   Resolved.
%  \end{comAdam}

 \begin{proof}
Write $\underline{\alpha}:=\max\{k\in\Zb: k<\alpha\}$. Throughout we will use that for $N\in\Nb$ and $x_n\in\Rb_+$ for $n=1,...,N$ we have
\begin{flalign}\label{e:Con}
\left(\sum_{n=1}^Nx_n\right)^\alpha\leq N^{\underline{\alpha}}\sum_{n=1}^Nx_n^\alpha
\end{flalign}
which follows from convexity for $\alpha\geq 1$ and the bound $||\cdot||_{1/\alpha}\leq ||\cdot||_1$ for $l^p$ norms with $\alpha<1$. 

We can write
  \[\sigma_{e^*}=\sum_{x \in \mathbf{T}_h^*}v_x \qquad \text{where} \qquad v_x=\sum_{k=0}^{\sigma_{e^*}-1}\ind_{\{Z_k=x\}}\]
  is the number of visits to $x$ before returning to ${e^*}$. By \eqref{e:Con} it then follows that
  \begin{flalign}\label{e:Jen2}
   \Eb\left[\Et^{\mathbf{T}_h^*}_{\lambda}\left[\sigma_{e^*}^\alpha\right]\right] 
   \;=\; \Eb\left[\Et^{\mathbf{T}_h^*}_{\lambda}\left[\left(\sum_{x \in \mathbf{T}_h^*}v_x\right)^\alpha\right]\right]  
    \;\leq\; \Eb\left[d(\mathbf{T}_h^*)^{\underline{\alpha}}\sum_{x \in \mathbf{T}_h^*}\Et^{\mathbf{T}_h^*}_{\lambda}\left[v_x^\alpha\right]\right]
   \end{flalign}
where, using a decomposition up to the first hitting time of $x$ we have that
\[\Et^{\mathbf{T}_h^*}_{\lambda,{e^*}}\left[v_x^\alpha\right]=\Pt^{\mathbf{T}_h^*}_{\lambda,{e^*}}\left(\sigma_x<\sigma_{e^*}\right)\Et^{\mathbf{T}_h^*}_{\lambda,x}\left[v_x^\alpha\right]\leq \Et^{\mathbf{T}_h^*}_{\lambda,x}\left[v_x^\alpha\right].\]

Started from $x$, for the walk to reach to $e^*$ before returning to $x$, the walk must initially move to $\pi(x)$. It follows that the number of visits to $x$ before reaching $e^*$ is geometrically distributed with termination probability 
\begin{flalign}\label{e:GR}
\Pt^{\mathbf{T}_h^*}_{\lambda,x}(\sigma_{e^*}<\sigma_x)=\frac{\lambda}{\lambda+\nu(x)}\cdot \Pt^{\mathbf{T}_h^*}_{\lambda,\pi(x)}(\sigma_{e^*}<\sigma_x)
\end{flalign}
where $\Pt^{\mathbf{T}_h^*}_{\lambda,\pi(x)}(\sigma_{e^*}<\sigma_x)$ depends only on $\lambda$ and the distance between $e^*$ and $x$. By Lemma \ref{l:GeoAlpha} we have that, for some constant $C_{\alpha}$,  
\begin{flalign}\label{e:Drp}
\Et^{\mathbf{T}_h^*}_{\lambda,x}\left[v_x^\alpha\right] \leq C_{\alpha}\Pt^{\mathbf{T}_h^*}_{\lambda,x}(\sigma_{e^*}<\sigma_x)^{-\alpha}\Pt^{\mathbf{T}_h^*}_{\lambda,x}(\sigma_x<\sigma_{e^*})\leq C_{\alpha}\Pt^{\mathbf{T}_h^*}_{\lambda,x}(\sigma_{e^*}<\sigma_x)^{-\alpha}.
\end{flalign}

For $r\in\Nb$ write
 \[\Rc(\lambda,\alpha,r)=\begin{cases} r^\alpha & \text{if } \lambda=1,\\ \lambda^{-r\alpha} & \text{if } \lambda < 1,\\ 1 & \text{if } \lambda > 1,\end{cases}\]
then, by the Gambler's ruin and \eqref{e:GR}, we have that
\[\Pt^{\mathbf{T}_h^*}_{\lambda,x}(\sigma_{e^*}<\sigma_x)^{-\alpha}\leq (1+\lambda^{-1}\nu(x))^\alpha\Rc(\lambda,\alpha,d^*(x))\]
where $d^*(x)$ denotes the distance between $x\in\Tr_h^*$ and the root $e^*$. Substituting this with \eqref{e:Drp} into \eqref{e:Jen2} and using \eqref{e:Con} we have that 
\begin{flalign*}
\Eb\left[\Et^{\mathbf{T}_h^*}_{\lambda,e^*}\left[\sigma_{e^*}^\alpha\right]\right] 
&\leq C_{\alpha}\Eb\left[d(\mathbf{T}_h^*)^{\underline{\alpha}}\sum_{x \in \mathbf{T}_h^*}(1+\lambda^{-1}\nu(x))^{\alpha}\Rc(\lambda,\alpha,d^*(x))\right]\\
&\leq \tilde{C}_{\alpha}\Eb\left[d(\mathbf{T}_h^*)^{\underline{\alpha}}\sum_{x \in \mathbf{T}_h^*}(1+\lambda^{-\alpha}\nu(x)^{\alpha})\Rc(\lambda,\alpha,d^*(x))\right]\\
&\leq \tilde{C}_{\alpha}\Eb\left[d(\mathbf{T}_h^*)^{\underline{\alpha}}\sum_{k=0}^\infty W_k^{\mathbf{T}_h^*}\left(1+\lambda^{-\alpha}\left(W_{k+1}^{\mathbf{T}_h^*}\right)^{\underline{\alpha}+1}\right)\Rc(\lambda,\alpha,k)\right]
\end{flalign*}
where, for the final inequality, we have replaced the sum over vertices in the tree with a sum over the generations and bounded the number of children of a vertex in generation $k$ with the total number of vertices in generation $k+1$.

Since $W_1^{\mathbf{T}_h^*}=1=W_0^{\Tr_h}$ and $W_{k+1}^{\mathbf{T}_h^*}=W_k^{\Tr_h}$ for $k\geq 1$, we have that $1+\lambda^{-\alpha}\left(W_{k+1}^{\mathbf{T}_h^*}\right)^{\underline{\alpha}+1}\leq C\sum_{j=0}^\infty \left(W_j^{\mathbf{T}_h}\right)^{\underline{\alpha}+1}$ for any $k\geq 1$ and a constant $C\leq 1+a^{-\alpha}$. The process $W_n^{\mathbf{T}_h}$ is a GW-process with offspring distribution $\xi_h$ which has mean $\lambda_c$ and exponential moments. It therefore follows from Lemma \ref{l:GWProd} that
\begin{flalign}
\Eb\left[\Et^{\mathbf{T}_h^*}_{\lambda,e^*}\left[\sigma_{e^*}^\alpha\right]\right] 
&\leq C_{\alpha}\Eb\left[\sum_{k_1=0}^\infty\dots\sum_{k_{2\underline{\alpha}+2}=0}^\infty \Rc(\lambda,\alpha,k_1)\prod_{j=1}^{2\underline{\alpha}+2} W_{k_j}^{\mathbf{T}_h^*}\right] \notag\\
&\leq C_{\alpha}\sum_{k_1=0}^\infty\dots\sum_{k_{2\underline{\alpha}+2}=0}^\infty \Rc(\lambda,\alpha,k_1)\Eb\left[\prod_{j=1}^{2\underline{\alpha}+2} W_{k_j}^{\mathbf{T}_h^*}\right] \notag\\
&\leq \tilde{C}_{\alpha}\sum_{k_1=0}^\infty\dots\sum_{k_{2\underline{\alpha}+2}=0}^\infty \Rc(\lambda,\alpha,k_1)\lambda_c^{\max_{j\leq 2\underline{\alpha}+2}k_j}. \label{e:GWSum}
\end{flalign}

Taking first those terms in \eqref{e:GWSum} where $k_1\geq k_j$ for all $j$, we have
\begin{flalign*}
&\sum_{k_1=0}^\infty\dots\sum_{k_{2\underline{\alpha}+2}=0}^\infty \ind_{\{k_1=\max_{j\leq 2\underline{\alpha}+2}k_j\}}\Rc(\lambda,\alpha,k_1)\lambda_c^{k_1} \\
& \leq (2\underline{\alpha}+1)\sum_{k_1=0}^\infty (k_1+1)\Rc(\lambda,\alpha,k_1)\lambda_c^{k_1}
\end{flalign*}
which is bounded above uniformly over $\lambda\geq a$ since $a^{-\alpha}\lambda_c<1$ by our choice of $\alpha$.

Next, writing $m=\max_{j=2,...,2\underline{\alpha}+2}k_j$, taking the remaining terms in \eqref{e:GWSum} and noting that 
\[\sum_{k_1=0}^{m-1}\Rc(\lambda,\alpha,k_1) \leq m^2a^{-m}\]
we have 
\begin{flalign*}
\sum_{k_1=0}^\infty\dots\sum_{k_{2\underline{\alpha}+2}=0}^\infty \ind_{\{k_1<m\}}\Rc(\lambda,\alpha,k_1)\lambda_c^{m} & \leq (2\underline{\alpha}+1)\sum_{m=1}^\infty m^3a^{-m\alpha}\lambda_c^{m}
\end{flalign*}
which is finite by our choice of $\alpha$.
 \end{proof}
\end{lem}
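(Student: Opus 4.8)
The plan is to expand the return time as a sum of local times, $\sigma_{e^*}=\sum_{x\in\mathbf{T}_h^*}v_x$ with $v_x:=\sum_{0\le k<\sigma_{e^*}}\ind_{\{Z_k=x\}}$, then bound each $\Et^{\mathbf{T}_h^*}_{\lambda}[v_x^\alpha]$ using a Gambler's ruin estimate together with Lemma \ref{l:GeoAlpha}, and finally sum the resulting contributions against the moments of the generation sizes of $\mathbf{T}_h$ supplied by Lemma \ref{l:GWProd}. Write $\underline{\alpha}:=\max\{k\in\Zb:k<\alpha\}$ and recall the elementary inequality $\bigl(\sum_{n=1}^Nx_n\bigr)^\alpha\le N^{\underline{\alpha}}\sum_{n=1}^Nx_n^\alpha$ for $x_n\ge0$ (convexity if $\alpha\ge1$, subadditivity of $t\mapsto t^\alpha$ if $\alpha<1$). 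Since $\mathbf{T}_h$ is subcritical with offspring law $\xi_h$ enjoying exponential moments (Remark \ref{r:mom}), its total size $|\mathbf{T}_h^*|$ has exponential moments, so fixed powers of it are harmless.

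First I would reduce to a single vertex. The power-sum inequality with $N\le|\mathbf{T}_h^*|$ gives $\Et^{\mathbf{T}_h^*}_{\lambda}[\sigma_{e^*}^\alpha]\le|\mathbf{T}_h^*|^{\underline{\alpha}}\sum_{x}\Et^{\mathbf{T}_h^*}_{\lambda}[v_x^\alpha]$, and decomposing at the first visit to $x$ gives $\Et^{\mathbf{T}_h^*}_{\lambda}[v_x^\alpha]\le\Et^{\mathbf{T}_h^*}_{\lambda,x}[v_x^\alpha]$. Under $\Pt^{\mathbf{T}_h^*}_{\lambda,x}$ the law of $v_x$ is geometric with termination probability $\Pt^{\mathbf{T}_h^*}_{\lambda,x}(\sigma_{e^*}<\sigma_x)=\tfrac{\lambda}{\lambda+\nu(x)}\Pt^{\mathbf{T}_h^*}_{\lambda,\pi(x)}(\sigma_{e^*}<\sigma_x)$, so Lemma \ref{l:GeoAlpha} bounds $\Et^{\mathbf{T}_h^*}_{\lambda,x}[v_x^\alpha]$ by a constant times $\Pt^{\mathbf{T}_h^*}_{\lambda,x}(\sigma_{e^*}<\sigma_x)^{-\alpha}$. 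The structural point is that, as $\mathbf{T}_h$ is finite, every excursion of the walk off the unique path joining $e^*$ to $x$ eventually returns to that path; projecting onto the path thus shows that $\Pt^{\mathbf{T}_h^*}_{\lambda,\pi(x)}(\sigma_{e^*}<\sigma_x)$ equals the escape probability of a walk on a segment of length $r:=d^*(x)$ that moves toward $e^*$ with probability $\tfrac{\lambda}{\lambda+1}$ at each interior site, which depends only on $\lambda$ and $r$. A Gambler's ruin computation then gives $\Pt^{\mathbf{T}_h^*}_{\lambda,\pi(x)}(\sigma_{e^*}<\sigma_x)^{-1}=\sum_{i=0}^{r-1}\lambda^{-i}$, so that
\[\Pt^{\mathbf{T}_h^*}_{\lambda,x}(\sigma_{e^*}<\sigma_x)^{-\alpha}\le(1+\lambda^{-1}\nu(x))^\alpha\,\Rc(\lambda,\alpha,r),\qquad \Rc(\lambda,\alpha,r):=\begin{cases}r^\alpha,&\lambda\ge1,\\ r^\alpha\lambda^{-r\alpha},&\lambda<1.\end{cases}\]

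Next I would sum over $x$, organising by generations and bounding $\nu(x)$ for $x$ in generation $k$ by the size $W_{k+1}^{\mathbf{T}_h^*}$ of the next generation; using the power-sum inequality once more to split $(1+\lambda^{-1}\nu(x))^\alpha$, this reduces matters to controlling
\[\sum_{k\ge0}\Rc(\lambda,\alpha,k)\,\Eb\!\left[|\mathbf{T}_h^*|^{\underline{\alpha}}\,W_k^{\mathbf{T}_h^*}\Bigl(1+\lambda^{-\alpha}(W_{k+1}^{\mathbf{T}_h^*})^{\underline{\alpha}+1}\Bigr)\right].\]
Expanding $|\mathbf{T}_h^*|^{\underline{\alpha}}=\bigl(\sum_jW_j^{\mathbf{T}_h^*}\bigr)^{\underline{\alpha}}$ rewrites each expectation as a finite sum of expectations of products of at most $2\underline{\alpha}+2$ generation sizes of the subcritical GW-process $W^{\mathbf{T}_h}$ of mean $\lambda_c$, and Lemma \ref{l:GWProd} bounds any such product by a constant times $\lambda_c$ to the largest index. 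After relabelling, the whole quantity is at most a constant multiple of $\sum_{k_1,\dots,k_N}\Rc(\lambda,\alpha,k_1)\,\lambda_c^{\max_jk_j}$ for some fixed $N=N(\alpha)$.

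Finally I would estimate this multi-index sum by splitting according to whether $k_1$ realises the maximum. When it does, it is dominated by $\sum_{k_1}(k_1+1)^{N}\Rc(\lambda,\alpha,k_1)\lambda_c^{k_1}$; when it does not, using $\sum_{k_1<m}\Rc(\lambda,\alpha,k_1)\le m^{1+\alpha}a^{-m\alpha}$ it is dominated by $\sum_m m^{N+1+\alpha}a^{-m\alpha}\lambda_c^m$. Both series are finite and bounded uniformly over $\lambda\ge a$ precisely because the hypothesis $\alpha<\log(\lambda_c)/\log(a)$ is equivalent to $a^{-\alpha}\lambda_c<1$, so that $(\lambda^{-\alpha}\lambda_c)^{k}\le(a^{-\alpha}\lambda_c)^{k}$ (and $a^{-\alpha}\lambda_c$ itself) is summable against any polynomial weight, while for $\lambda\ge1$ the factor $\Rc(\lambda,\alpha,k)=k^\alpha$ is already harmless against $\lambda_c^{k}$. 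I expect the main obstacle to be the Gambler's ruin step made uniform in $\lambda$: one must isolate the dependence on $r=d^*(x)$ cleanly, treat the regimes $\lambda<1$ and $\lambda\ge1$ (in particular $\lambda$ near $1$) on the same footing, and keep the $\nu(x)$-dependence as a fixed power so that Lemma \ref{l:GWProd} can absorb it — the constraint on $\alpha$ being consumed exactly when one checks that the resulting multi-index sum converges uniformly.
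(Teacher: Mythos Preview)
Your proposal is correct and follows essentially the same route as the paper: decompose $\sigma_{e^*}$ into local times $v_x$, bound $\Et_{\lambda,x}[v_x^\alpha]$ via the geometric law and Lemma~\ref{l:GeoAlpha}, control the termination probability by a Gambler's ruin estimate on the path from $e^*$ to $x$, organise the sum by generations, and then apply Lemma~\ref{l:GWProd} to the resulting products of generation sizes before splitting the multi-index sum on whether $k_1$ realises the maximum. The only cosmetic differences are that you carry the prefactor $|\mathbf{T}_h^*|^{\underline{\alpha}}$ where the paper writes $d(\mathbf{T}_h^*)^{\underline{\alpha}}$ (both are ultimately expanded as $(\sum_j W_j)^{\underline{\alpha}}$), and your $\Rc(\lambda,\alpha,r)$ keeps the factor $r^\alpha$ for all $\lambda\ge1$ whereas the paper sharpens it to $1$ for $\lambda>1$; either version is summable against $\lambda_c^r$, so the conclusion is unaffected.
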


Let $\chi_k:=S(k+1)-S(k)$ denote the total time taken between $Z_n$ making the $k\th$ and $(k+1)\th$ transition along the backbone. This time consists of
\[N_k:=\sum_{n=S(k)+1}^{S(k+1)}\ind_{\{Z_n=Y_k\}}\]
excursions into the finite trees appended to the backbone at this vertex and one additional step to the next backbone vertex. Write $\vartheta_k^{(0)}:=S(k)$ and $\vartheta_k^{(j)}:=\inf\{n>\vartheta_k^{(j-1)}:Z_n=Y_k\}$ for $j\geq 1$ to be the hitting times of the backbone after time $S(k)$. We can then write
\begin{flalign}\label{e:etaK}
\chi_k:=1+\sum_{j=1}^{N_k}\gamma_{k,j} \qquad \text{ where } \qquad \gamma_{k,j}:=\vartheta_k^{(j)}-\vartheta_k^{(j-1)}
\end{flalign}
%\begin{flalign}\label{e:etaK}\chi_k:=1+\sum_{j=1}^{N_k}\gamma_{k,j} \qquad \text{ where } \qquad \gamma_{k,j}:=\sum_{i=r(k)}^{r(k+1)}\ind_{\{\sum_{l=r(k)}^{i}\ind_{\{X_l=Y_k\}}=j\}}\end{flalign}
is the duration of the $j\th$ such excursion. 

\begin{proof}[Proof of Proposition \ref{p:UniMom}]
Recall that $\underline{\alpha}:=\max\{k\in\Zb: k<\alpha\}$ and write $\overline{\alpha}:=\min\{k\in\Zb: k\geq\alpha\}$.
We therefore have that $\Et^{\tt NB}_\lambda\left[\tau_1^\alpha\right] $ can be written as
\begin{flalign*}
\Et^{\tt NB}_\lambda\left[\tau_1^\alpha\right] 
&= \Et^{\tt NB}_\lambda\left[\left(\sum_{k=1}^{\zeta_1}\chi_k\right)^\alpha\right] \\
&\leq \Et^{\tt NB}_\lambda\left[\zeta_1^{\underline{\alpha}}\sum_{k=1}^{\zeta_1}\chi_k^\alpha\right] 
\end{flalign*}
by \eqref{e:Con}. Using \eqref{e:Con} again with the decomposition \eqref{e:etaK} we can write this as 
\begin{flalign*}
  &\Et^{\tt NB}_\lambda\left[\zeta_1^{\underline{\alpha}}\sum_{k=1}^{\zeta_1} \!\left(1+\sum_{j=1}^{N_k}\gamma_{k,j}\right)^\alpha\!\right] \\
 & \qqquad  \leq \Et^{\tt NB}_\lambda\left[\zeta_1^{\underline{\alpha}}\sum_{k=1}^{\zeta_1}(N_k+1)^{\underline{\alpha}}\left(1+\sum_{j=1}^{N_k}\gamma_{k,j}^\alpha\right) \right]. 
\end{flalign*}

The excursion times $\gamma_{k,j}$ are distributed as the first return time to $e^*$ for a walk started from $e^*$ on $\Tr_h^*$. Moreover, under $\Pt^{\tt NB}_\lambda$, they are independent of the backbone, the buds and the walk on the backbone and buds. In particular, they are independent of the regeneration times of $Y$ and the number of excursions therefore the above expectation can be bounded above by 
\begin{flalign*}
 \Eb\left[\Et^{\Tr_h^*}_{\lambda}\left[\sigma_{e^*}^\alpha\right]\right] \Et^{\tt NB}_\lambda\left[\zeta_1^{\underline{\alpha}}\sum_{k=1}^{\zeta_1}(N_k+1)^{\overline{\alpha}} \right].
\end{flalign*}
Where, by Lemma \ref{l:subComp}, we have that $\sup_{\lambda\in[a,b]}\Eb\left[\Et^{\Tr_h^*}_{\lambda}\left[\sigma_{e^*}^\alpha\right]\right] <\infty$.

%\begin{comAdam}
%You question in an email that `the upper bound which follows "...the above expectation can be
%bounded above by" should be multiplied by something like "the number of excursions away from a vertex $x$ on the backbone before
%the random walk makes a step along the backbone" because by Harris' decomposition, the number of subcritical trees attached
%to $x$ can be larger than 1'. 
%
%This is included in $N_k$ (which becomes $M_{z_j,l}$). There are a geometric number of them and we use Lemma \ref{l:GeoAlpha} to bound this.
%\end{comAdam}
%\begin{comYuki}
%I understand. Thank you for a detailed answer.
%\end{comYuki}

Let $(z_j)_{j=0}^\infty$ denote the ordered distinct vertices visited by $Y$ and 
\[\Lc(z,j):=\sum_{k=0}^j \ind_{\{Y_k=z\}}, \quad \Lc(z):=\Lc(z,\infty)\]
the local times of the vertex $z$. Write 
\[M_{z,l}:=\sum_{j=0}^\infty \ind_{\left\{Z_j=z,\; Z_{j+1} \notin \Tr_g, \; \Lc(z,j)=l\right\}}\]
to be the number of excursions from $z$ (by $Z$) on the $l\th$ visit to $z$ (by $Y$) for $l=1,...,\Lc(z)$ and $\Jc:=|\{Y_j\}_{j=1}^{\zeta_1-1}|$ the number of distinct vertices visited by $Y$ between time $1$ and time $\zeta_1-1$. 
Each $k\leq \zeta_1$ corresponds to a unique pair $(z_j,l)$ with $j\leq \Jc$ and $l\leq \Lc(z_j)$ with $M_{z_j,l}=N_k$ therefore 
\begin{flalign}
& \Et^{\tt NB}_\lambda\left[\zeta_1^{\underline{\alpha}}\sum_{k=1}^{\zeta_1}(N_k+1)^{\overline{\alpha}} \right]  \notag \\
& \quad  = \Et^{\tt NB}_\lambda\left[\zeta_1^{\underline{\alpha}}\sum_{j=1}^{\Jc}\sum_{l=1}^{\Lc(z_j)}(M_{z_j,l}+1)^{\overline{\alpha}} \right] \notag \\
& \quad = \sum_{j=1}^{\infty}\sum_{l=1}^{\infty}\Et^{\tt NB}_\lambda\left[\zeta_1^{\underline{\alpha}}\ind_{\{j\leq \Jc, \; l\leq \Lc(z_j)\}}(M_{z_j,l}+1)^{\overline{\alpha}}\right] \notag \\
& \quad \leq \sum_{j=1}^{\infty}\sum_{l=1}^{\infty}\Big(\Et^{\tt NB}_\lambda\left[\zeta_1^{2\underline{\alpha}}\ind_{\{j\leq \Jc, \; l\leq \Lc(z_j)\}}\right]\Et^{\tt NB}_\lambda\left[(M_{z_j,l}+1)^{2\overline{\alpha}} \right]\Big)^{1/2} \label{e:CSBnd}
\end{flalign}
by the Cauchy-Schwarz inequality. For all $1\leq j\leq \Jc$ we have that $\Lc(z_j)\leq \zeta_1$; moreover, $\Jc\leq \zeta_1$ therefore \[\ind_{\{j\leq \Jc, \; l\leq \Lc(z_j)\}}\leq\ind_{\{j,l\leq \zeta_1\}}.\] 

%\begin{comYuki}
%I couldn't follow the first equality:
%\begin{align*}
%\Et^{\tt NB}_\lambda\left[\zeta_1^{\underline{\alpha}}\sum_{k=1}^{\zeta_2-\zeta_1}(N_k+1)^{\overline{\alpha}} \right]  
%= \Et^{\tt NB}_\lambda\left[\zeta_1^{\underline{\alpha}}\sum_{k=1}^{\Jc}\sum_{l=1}^{\Lc(z_k)}(M_{z_k,l}+1)^{\overline{\alpha}} \right]
%\end{align*}
%Could you explain why this holds in more detail?
%\end{comYuki}
%\begin{comAdam}
% I have added a comment, is it clear from this?
%\end{comAdam}
%\begin{comYuki}
%OK, I understand. Thank you for your effort.
%\end{comYuki}

Due to the independence structure of the GW-tree, for any fixed $j$ the distribution of the number of children of $z_j$ is equal to the distribution of the number of children of the root. 
Since the root does not have a parent, we have that the walk is more likely to take an excursion into one of the neighbouring traps when at the root than from a vertex with the same number of children. 
We can, therefore, stochastically dominate the number of excursions from a backbone vertex by the number of excursions from the root to see that $\Et^{\tt NB}_\lambda\left[(M_{z_j,l}+1)^{2\overline{\alpha}} \right]\leq \Et^{\tt NB}_\lambda\left[(M_{z_0,1}+1)^{2\overline{\alpha}} \right]$. 

% Due to the independence structure of the GW-tree, the distribution over the number of children of a vertex is independent of whether the walk reaches that vertex (started from the root). Since the root does not have a parent, without any further information concerning the number of children from a given vertex, we have that the walk is more likely to take an excursion into one of the neighbouring traps when at the root than from this vertex. We can, therefore, stochastically dominate the number of excursions from a backbone vertex by the number of excursions from the root to see that $\Et^{\tt NB}_\lambda\left[(M_{z_j,l}+1)^{2\overline{\alpha}} \right]\leq \Et^{\tt NB}_\lambda\left[(M_{z_0,1}+1)^{2\overline{\alpha}} \right]$. 
%\begin{comYuki}
%The above explanation was understandable for me, but some might find it difficult to understand it.
%Is it possible to add a bit more detail? 
%\end{comYuki}
%\begin{comAdam}
%I've changed the explanation, is this better?
%\end{comAdam}
%\begin{comYuki}
%I think the above argument become clearer. Thank you for your effort.
%\end{comYuki}

Using this and the Cauchy-Schwarz inequality, the expression \eqref{e:CSBnd} is bounded above by
\begin{flalign*}%\label{e:zeta}
%& \sum_{j,l=1}^{\infty}\left(\Et^{\tt NB}_\lambda\left[\zeta_1^{2\underline{\alpha}}\ind_{\{j,l\leq \zeta_2-\zeta_1\}}\right]\Et^{\tt NB}_\lambda\left[(M_{z_j,l}+1)^{2\overline{\alpha}} \right]\right)^{1/2} \\
&  \Et^{\tt NB}_\lambda\!\left[\zeta_1^{4\underline{\alpha}}\right]^{1/4}\Et^{\tt NB}_\lambda\!\left[(M_{z_0,1}+1)^{2\overline{\alpha}} \right]^{1/2}\sum_{j,l=1}^{\infty}\Pt^{\tt NB}_\lambda\left(j,l\leq \zeta_1\right)^{1/4}.
\end{flalign*}
%By Remark \ref{r:Rayleigh}, the term $\Pt^{\tt NB}_\lambda(\zeta_1=1)=\Pt_{\lambda}(\sigma_e=\infty)$ is decreasing in $\lambda$ therefore 
%\[\sup_{\lambda\in[a,b]}\Pt^{\tt NB}_\lambda(\zeta_1=1)^{-1}\leq \Pt_b(\zeta_1=1)^{-1}<\infty.\]
%\begin{comYuki}
%I couldn't figure out what you meant by ``a coupling argument''. But this part is very elementary anyway since
%we can express the probability $\Pt^{\tt NB}_\lambda(\zeta_1=1)$ explicitly using the effective resistance, which is obviously monotonic
%by Rayleigh's monotonicity principle. 
%\end{comYuki}
%\begin{comAdam}
%I've included a comment in Section \ref{s:uni} concerning this. 
%\end{comAdam}
%\begin{comYuki}
%Please have a look at my comment following yours in Section \ref{s:uni}.
%\end{comYuki}
By Remark \ref{r:mom} the offspring distribution $\xi_g$ has exponential moments, we therefore have that the time between regenerations of $Y$ has finite $4\underline{\alpha}$ moments uniformly over $\lambda \in[a,b]$ by Proposition \ref{p:UnMo}. That is, $\sup_{\lambda\in[a,b]}\Et^{\tt NB}_\lambda\left[\zeta_1^{4\underline{\alpha}}\right]<\infty$.

Write $W_n$ and $W^g_n$ to be the GW-processes associated with $\Tr$ and $\Tr_g$. The number of excursions from the root is geometrically distributed with termination probability $1-p_{ex}$ where 
\[p_{ex}:=\frac{W_1-W_1^g}{W_1}.\]
Using Lemma \ref{l:GeoAlpha} we therefore have that, for a constant $C$ independent of $\lambda$,  
\[\Et^{\tt NB}_\lambda\left[(M_{z_0,1}+1)^{2\overline{\alpha}} \right] \;\leq \; C\mathbb{E}[(1-p_{ex})^{-2\overline{\alpha}}] \;\leq \; 
C\mathbb{E}[W_1^{2\overline{\alpha}}]  \;< \;\infty\]
since $W_1\ed \xi$ which has exponential moments. 

It remains to show that 
\begin{flalign}\label{e:dubsum}
\sum_{j=1}^{\infty}\sum_{l=1}^{\infty}\Pt^{\tt NB}_\lambda\left(j,l\leq \zeta_1\right)^{1/4}
\end{flalign}
is finite. Note that $\Pt^{\tt NB}_\lambda\left(j,l\leq\zeta_1\right)=\Pt^{\tt NB}_\lambda\left(\zeta_1\geq l\right)$ whenever $l\geq j$. Using Chebyshev's inequality we can then bound \eqref{e:dubsum} above by
\begin{flalign*}
 2\sum_{j=1}^{\infty}\sum_{l=j}^{\infty}\Pt^{\tt NB}_\lambda\left(\zeta_1\geq l\right)^{1/4} \leq 2\sum_{j=1}^{\infty}\sum_{l=j}^{\infty} \left(\frac{\Et^{\tt NB}_\lambda\left[\zeta_1^u\right]}{l^u}\right)^{1/4}
\end{flalign*}
for any integer $u$. In particular, we have that $\sup_{\lambda\in[a,b]}\Et^{\tt NB}_\lambda\left[\zeta_1^u\right]$ is finite for any integer $u$ by Proposition \ref{p:UnMo}. Choosing $u>8$ we then have that this sum is finite which completes the proof
of the moment estimate of $\tau_1$.
\end{proof}

%\textcolor{red}{\bf Added the proof!} 
Finally, we complete the proof of Proposition \ref{est:sigma}.  
\begin{proof}[Proof of Proposition \ref{est:sigma}]
We first observe that
\begin{align}\label{est:last}
\Eb\left[E_{\lambda}^{\mathbf{T}^*}\left[\sigma_{e^*(\mathbf{T})}\ind_{\{\sigma_{e^*(\mathbf{T})}<\infty\}}\right]\right]
\leq E_{\lambda}\left[1+\sum_{k=1}^{\sigma^Y_{e^*(\Tr_g)}}\chi_k\ ;\ \sigma_{e^*(\mathbf{T})}<\infty\right],
\end{align}
where $\sigma^Y_{e^*(\Tr_g)}$ is the first time that $(Y_n)$ returns to $e^*(\Tr_g)$. The reason why \eqref{est:last}
is an inequality is that the walk $(Z_n)$ may enter traps attached to $e(\Tr)=e(\Tr_g)$ and return to $e^{*}(\Tr)=e^*(\Tr_g)$ without any transitions on the backbone $\Tr_g$. It is straightforward to check \eqref{est:last}
by using Lemma \ref{lem:est:sigma}, Lemma \ref{l:subComp} and arguments in the proof of Proposition \ref{est:sigma}.
\end{proof}
%\begin{comAdam}
%The same proof holds replacing $\tau_2-\tau_1$ with $\tau_1$ if we have that $\Et_\lambda\left[\zeta_1^j\right]$ is finite for any integer $j$. This should hold similarly to \cite[Proposition 3]{PZ}.
%\end{comAdam}

%\begin{comYuki}
%\begin{enumerate}
%\item In the light of \eqref{e:zeta}, we need to show that $\Et_\lambda\left[\left(\zeta_2-\zeta_1\right)^j\right]$ 
%(for any $j\in\mathbb{N}$) is bounded from above by a constant which is locally uniform in $\lambda$.  
%I think this is not proved in the bove proof.
%\item I think that whenever you use ``$C$'' which represents a constant in the proof of Proposition \ref{p:UniMom},
%it's better to clarify its dependence on $\lambda$, because every constant appearing in the bound of 
%$\Et_{\lambda}[(\tau_2-\tau_1)^{\alpha}]$ needs to be locally uniform in $\lambda$.
%\item Thus, we need to prove that $\Pt_\lambda(\zeta_1=1)^{-1}$, which appears in \eqref{e:zeta}, is locally uniform in $\lambda$.
%\item We actually need a locally uniform moment estimate for $\tau_1$! See \eqref{tau1} in the proof of Proposition \ref{appro}.  
%\end{enumerate}
%\end{comYuki}

%\begin{comAdam}
%\begin{enumerate}
%\item
%Done.
%\item 
%I have done this; remaining constants do not depend on $\lambda$.
%\item 
%Comment added.
%\item 
%Fine, I may restate the Proposition so as not to include $\tau_1$. That $\Et_\lambda[\tau_1]$ is bounded uniformly follows straightforwardly from \cite{DGPZ} and the estimates from Section \ref{s:uni}. 
%\end{enumerate}
%\end{comAdam}
%\begin{comYuki}
%OK. Thank you for your effort.
%\end{comYuki}

\section*{Acknowledgements}
We would like to thank Ryokichi Tanaka for suggesting to apply the methods in \cite{M} for studying the differentiability of the speed of biased random walks on Galton-Watson trees. 
Our gratitude goes to Pierre Mathieu for helpful discussions during the second author's stay in Marseille.  
We would also like to thank Gerard Ben Arous for several discussions about fluctuation-dissipation theorems. 
Our thanks also go out to the anonymous referee for a thorough examination of the preliminary version.
A.\ B.\ acknowledges support of NUS grant R-146-000-260-114.
Y. T. was supported by JSPS KAKENHI Grant Number JP16H06338.

\end{document}